\newcommand\reroot{\uparrow}
\newcommand\marg{\downarrow}
\newcommand\nix{\,\cdot\,}
\newcommand\vV{\vec V}
\newcommand\vW{\vec W}
\newcommand\vS{\vec S}
\newcommand\ind{\mathrm{ind}}
\newcommand\KL[2]{D\bc{{{#1}\|{#2}}}}
\newcommand\atom{\delta}
\newcommand\thet{\vartheta}
\newcommand{\beq}{\begin{equation}} \newcommand{\eeq}{\end{equation}}
\newcommand\ism{\cong}
\newcommand\G{\vec G}
\newcommand\T{\vec T}
\numberwithin{equation}{section}
\newcommand\bem{\bf\em}
\newcommand\bemph[1]{{\bf\em #1}}
\def\vec#1{\mathchoice{\mbox{\boldmath$\displaystyle#1$}}
{\mbox{\boldmath$\textstyle#1$}}
{\mbox{\boldmath$\scriptstyle#1$}}
{\mbox{\boldmath$\scriptscriptstyle#1$}}}
\DeclareMathOperator{\pr}{\mathrm P}
\newcommand\SIGMA{\vec\sigma}
\newcommand\TAU{\vec\tau}
\newtheorem{definition}{Definition}[section]
\newtheorem{example}[definition]{Example}
\newtheorem{remark}[definition]{Remark}
\newtheorem{theorem}[definition]{Theorem}
\newtheorem{lemma}[definition]{Lemma}
\newtheorem{proposition}[definition]{Proposition}
\newtheorem{corollary}[definition]{Corollary}
\newcommand\dist{\mbox{dist}}
\newcommand\PHI{\vec\Phi}
\newcommand\fG{\mathfrak{G}}
\newcommand\fT{\mathfrak{T}}
\newcommand\cA{\mathcal{A}}
\newcommand\cB{\mathcal{B}}
\newcommand\cC{\mathcal{C}}
\newcommand\cF{\mathcal{F}}
\newcommand\cG{\mathcal{G}}
\newcommand\cE{\mathcal{E}}
\newcommand\cS{\mathcal{S}}
\newcommand\cT{\mathcal{T}}
\newcommand\cL{\mathcal{L}}
\newcommand\cM{\mathcal{M}}
\newcommand\cP{\mathcal{P}}
\newcommand\cX{\mathcal{X}}
\newcommand\cV{\mathcal{V}}
\newcommand\cZ{\mathcal{Z}}
\def\cC{{\mathcal C}}
\def\cE{{\mathcal E}}
\newcommand\eps{\varepsilon}
\newcommand\Var{\mathrm{Var}}
\newcommand\Erw{\mathrm{E}}
\newcommand{\vecone}{\vec{1}}
\newcommand{\Be}{{\rm Be}}
\newcommand\TV[1]{\left\|{#1}\right\|_{\mathrm{TV}}}
\newcommand{\bink}[2] {{\binom{#1}{#2}}}
\newcommand\bc[1]{\left({#1}\right)}
\newcommand\cbc[1]{\left\{{#1}\right\}}
\newcommand\bcfr[2]{\bc{\frac{#1}{#2}}}
\newcommand{\bck}[1]{\left\langle{#1}\right\rangle}
\newcommand\brk[1]{\left\lbrack{#1}\right\rbrack}
\newcommand\abs[1]{\left|{#1}\right|}
\newcommand\RR{\mathbb{R}}
\newcommand{\stacksign}[2]{{\stackrel{\mbox{\scriptsize #1}}{#2}}}
\newcommand{\tensor}{\otimes}
\newcommand{\Erdos}{Erd\H{o}s}
\newcommand{\Renyi}{R\'enyi}
\newcommand{\Lovasz}{Lov\'asz}
\newcommand{\Szemeredi}{Szemer\'edi}
\newcommand\Lem{Lemma}
\newcommand\Prop{Proposition}
\newcommand\Thm{Theorem}
\newcommand\Def{Definition}
\newcommand\Cor{Corollary}
\newcommand\Sec{Section}
\newcommand\Chap{Chapter}
\begin{document}

\title{Harnessing the Bethe free energy$^{*}$}

\author[Bapst and Coja-Oghlan]{Victor Bapst$^{**}$, Amin Coja-Oghlan$^{**}$}
\thanks{$^{*}$
A preliminary version~\cite{OldVersion} of this paper, presented by the first author at RANDOM 2015 and by the seocnd author at the RS\&A 2015 conference,
contained a critical technical error that affected its main results.
This present version is based on similar key insights but the main results are different from the ones stated in~\cite{OldVersion}.
}
\thanks{$^{**}$The research leading to these results has received funding from the European Research Council under the European Union's Seventh 
Framework Programme (FP/2007-2013) / ERC Grant Agreement n.\ 278857--PTCC}

\address{Amin Coja-Oghlan, {\tt acoghlan@math.uni-frankfurt.de}, Goethe University, Mathematics Institute, 10 Robert Mayer St, Frankfurt 60325, Germany.}

\address{Victor Bapst, {\tt bapst@math.uni-frankfurt.de}, Goethe University, Mathematics Institute, 10 Robert Mayer St, Frankfurt 60325, Germany.}

\begin{abstract}
\noindent
A wide class of problems in combinatorics, computer science and physics can be described along the following lines.
There are a large number of variables ranging over a finite domain that interact through constraints that each bind a few variables
and either encourage or discourage certain value combinations.
Examples include the $k$-SAT problem or the Ising model.
Such models naturally induce a Gibbs measure on the set of assignments, which is characterised by its partition function.
The present paper deals with the partition function of problems where the interactions between variables and constraints are induced by a sparse random (hyper)graph.
According to physics predictions, a generic recipe called the ``replica symmetric cavity method'' yields the correct
value of the partition function if the underlying model enjoys certain properties [Krzkala et al., PNAS 2007].
Guided by this conjecture, we prove general sufficient conditions for the success of the cavity method.
The proofs are based on a ``regularity lemma'' for probability measures on sets of the form $\Omega^n$ for a finite $\Omega$
and a large $n$ that may be of independent interest.

\bigskip
\noindent
\emph{Mathematics Subject Classification:} 05C80, 82B44
\end{abstract}

\maketitle

\section{Introduction}\label{Sec_intro}

\noindent
Despite their simplicity, or perhaps because thereof, the first and the second moment method are the most widely used techniques in probabilistic combinatorics.
\Erdos\ employed the first moment method famously to lower-bound the Ramsey number as well as to establish the existence of graphs of high girth
and high chromatic number~\cite{ErdosRamsey,Erdos}.
Even a half-century on, deterministic constructions cannot hold a candle to these probabilistic results~\cite{BRSW,Nesetril}.
Moreover, the second moment method has been used
to count prime factors~\cite{Turan} and Hamilton cycles~\cite{RobinsonWormald}
as well as to determine the two possible values of the chromatic number of a sparse random graph~\cite{AchNaor}.

Yet there are quite a few problems for which the standard first and the second moment methods are too simplistic.
The {\em random $k$-SAT model} is a case in point.
There are $n$ Boolean variables $x_1,\ldots,x_n$ and $m$ clauses $a_1,\ldots,a_m$, where $m=\lceil\alpha n\rceil$ for some fixed $\alpha>0$.
Each clause binds $k$ variables, which are chosen independently and uniformly, and
discourages them from taking precisely one of the $2^k$ possible truth value combinations.
The forbidden combination is chosen uniformly and independently for each clause.

The random $k$-SAT instance $\PHI=\PHI_k(n,m)$ gives rise to a probability measure on the set $\{0,1\}^n$ of all Boolean assignments naturally.
Indeed, for a given parameter $\beta\geq0$ the  {\em Gibbs measure} $\mu_{\PHI,\beta}$ is defined by letting
	\begin{align}\label{eqkSAT1}
	\mu_{\PHI,\beta}(\sigma)&=\frac1{Z_\beta(\PHI)}\prod_{i=1}^m\exp(-\beta\vecone\cbc{\sigma\mbox{ violates }a_i})
		\qquad\mbox{for every assignment $\sigma\in\{0,1\}^n$, where}\\
	Z_\beta(\PHI)&=\sum_{\sigma\in\cbc{0,1}^n}\prod_{i=1}^m\exp(-\beta\vecone\cbc{\sigma\mbox{ violates }a_i})\label{eqkSAT2}
	\end{align}
is called the {\em partition function}.
Thus, the Gibbs measure weighs assignments according to the number of clauses that they violate.
In effect, by tuning $\beta$ we can interpolate between just the uniform distribution on $\{0,1\}^n$ ($\beta=0$) and a measure
that strongly favours satisfying assignments ($\beta\to\infty$).
Hence, if we think of $\PHI$ as inducing a ``height function'' $\sigma\mapsto\#\{\mbox{clauses of $\PHI$ violated by }\sigma\}$ on the set of assignments,
then varying $\beta$ allows us to explore the resulting landscape.
Apart from its intrinsic combinatorial interest,
the shape of the height function, the so-called ``Hamiltonian'', governs the performance of  algorithms such as the Metropolis process or Simulated Annealing.

To understand the Gibbs measure it is key to get a handle on the partition function $Z_\beta(\PHI)$.
Of course, the default approach to this kind of problem would be to apply the first and second moment methods.
However, upon closer inspection it emerges that 
$Z_\beta(\PHI)<\exp(-\Omega(n))\Erw[Z_\beta(\PHI)]$ with high probability for {\em any} $\alpha,\beta>0$~\cite{maxsat}.
In other words, the first moment over-estimates the partition function of a typical random formula by an exponential factor.
The reason for this is a ``lottery effect'': a tiny minority of formulas render an exceptionally high contribution to $\Erw[Z_\beta(\PHI)]$.
Unsurprisingly, going to the second moment only exacerbates the problem and thus
for any $\alpha,\beta>0$ we find $\Erw[Z_\beta(\PHI)^2]\geq\exp(\Omega(n))\Erw[Z_\beta(\PHI)]^2$.
In other words, the second moment method fails rather spectacularly for all possible parameter combinations.

The first and the second moment method fall victim to similar large deviations effects in many alike ``random constraint satisfaction problems''.
These problems, ubiquitous in combinatorics, information theory, computer science and physics~\cite{nature,MM,Rudi},
can be described along the following lines.
A random {\em factor graph}, chosen either from a uniform distribution (like the random $k$-SAT model above) or from a suitable configuration model,
 induces interactions between the variables and the constraints.
The variables range over a fixed finite domain $\Omega$ and each constraint binds a few variables.
The constraints come with ``weight functions'' that either encourage or discourage certain value combinations of the incident variables.
Multiplying up the weight functions of all the contraints just like in (\ref{eqkSAT1})--(\ref{eqkSAT2}),
we obtain the Gibbs measure and the partition function.

With the standard first and second moment method drawing a blank, we seem to be at a loss as far as calculating the partition function is concerned.
However, physicists have put forward an ingenious albeit non-rigorous alternative called the {\em cavity method}~\cite{MM}.
This technique, which applies almost mechanically to any problem that can be described in the language of sparse random factor graphs,
yields an explicit conjecture as to the value of the partition function.
More specifically, the cavity method comes in several installments.
In this paper, we are concerned with the simplest, so-called ``replica symmetric'' version.

In one of their key papers~\cite{pnas} physicists hypothesized abstract conditions under which the replica symmetric cavity method
yields the correct value of the partition function.
The thrust of this paper is to prove corresponding rigorous results.
Specifically, according to \cite{pnas} the replica symmetric cavity method gives the correct answer if the Gibbs measure satisfies certain correlation decay properties.
For example, the {\em Gibbs uniqueness} condition requires that under the Gibbs measure the value assigned to a variable $x$ is asymptotically
independent of the values assigned to the variables at a large distance from $x$ in the factor graph.
In \Cor~\ref{Thm_smm} below we prove that this condition is indeed sufficient to guarantee the success of the cavity method.
Additionally, \Thm s~\ref{Thm_symUpperBound} and~\ref{Thm_nonReconstruction} yield rigorous sufficient conditions in terms of substantially weaker conditions, namely
	a symmetry property and the non-reconstruction property.

A key feature of the paper is that we establish these results not for specific examples but generically for a very wide class of factor graph models.
Of course, stating and proving general results requires a degree of abstraction.
In particular, we resort to the framework of local weak convergence of graph sequences~\cite[Part~4]{Lovasz}.
This framework suits the physics predictions well, which come in terms of the ``limiting tree'' that describes the local structure of a large random factor graph.
To be precise, 
the replica symmetric prediction is given by a functional called the {\em Bethe free energy} applied to an (infinite) random tree.

The principal tool to prove these results is a theorem about the structure of probability measures on sets of
the form $\Omega^n$ for some fixed finite set $\Omega$ and a large integer $n$, \Thm~\ref{Thm_decomp} below.
We expect that this result, which is inspired by \Szemeredi's regularity lemma~\cite{Szemeredi},  will be of independent interest.
To prove our results about random factor graphs, we combine \Thm~\ref{Thm_decomp} with the theory of local weak convergence to carry out completely
generically ``smart'' first and second moment arguments that avoid the lottery effects that the standard arguments fall victim to.

In \Sec~\ref{Sec_hom} we begin with the abstract results about probability measures on cubes.
Subsequently, in \Sec~\ref{Sec_factorGraphs} we set the stage by introducing the formalism of factor graphs and local weak convergence.
Further, in \Sec~\ref{Sec_Bethe} we state and prove the main results about Gibbs measures on random factor graphs.
Finally, \Sec~\ref{Sec_crazyConfigs} contains the proof of a technical result that enables us to control the local structure of random factor graphs.

\subsection*{Related work}
A detailed (non-rigorous) discussion of the  cavity method can be found in~\cite{MM}.
It is known that the replica symmetric version of the cavity method does not always yield the correct value of the partition function.
For instance, in some factor graph models there occurs a ``condensation phase transition'' beyond which the replica symmetric prediction is off~\cite{Lenka,pnas}.
The more complex ``1-step replica symmetry breaking (1RSB)'' version of the cavity method~\cite{MPZ} is expected to
yield the correct value of the partition function some way beyond condensation.
However, another phase transition called ``full replica symmetry breaking'' spells doom on even the 1RSB cavity method~\cite{MM}.

The replica symmetric cavity method has been vindicated rigorously in various special cases.
For instance, Montanari and Shah~\cite{MontanariShah} proved that in the random $k$-SAT model the
replica symmetric prediction is correct up to the Gibbs uniqueness threshold.
A similar result was obtained by
Bandyopadhyay and Gamarnik~\cite{Bandyopadhyay} for graph colorings and independent sets.
Furthermore, Dembo, Montanari and Sun~\cite{DMS} proved the replica symmetric conjecture on a class of models with specific types of constraints.
A strength of~\cite{DMS} is that the result applies even to sequences of non-random factor graphs under a local weak convergence assumption.
But both~\cite{DMS,MontanariShah} are based on the ``interpolation method''~\cite{FL,Guerra,PT}, which entails substantial restrictions on the types of models that can be handled.
By contrast, the present proof method is based on a completely different approach
centered around the abstract classification of measures on cubes that we present in \Sec~\ref{Sec_hom}.

Since the ``vanilla'' second moment method fails on the random $k$-SAT model, more sophisticated variants have been proposed.
The basic idea is to apply the second moment method not to the partition function itself but to a tweaked random variable.
For instance,  Achlioptas and Moore~\cite{nae} applied the second moment method to NAE-satisfying assignments, i.e.,
both the assignment and its binary inverse satisfy all clauses.
However, the number of NAE-satisfying assignments is exponentially smaller than the total number of satisfying assignments
and thus this type of argument cannot yield the typical value of the partition function.
The same is true of the more subtle random variable of Achlioptas and Peres~\cite{yuval}.
Furthermore, the work of Ding, Sly and Sun~\cite{DSS3} 
that yields the precise $k$-SAT threshold for large $k$ is based on applying the second moment method
to a random variable whose construction is guided by the 1RSB cavity method.
Among other things, the random variable from~\cite{DSS3} incorporates conditioning on the local structure of the factor graph,
an idea that will be fundamental to our arguments as well.

\subsection*{Notation}
If $\cX$ is a finite set, then we denote by $\cP(\cX)$ the set of probability measures on $\cX$.
Moreover, $\TV\nix$ signifies the total variation norm.
If $\mu$ is a probability measure on a product space $\cX^V$ for finite sets $\cX$, $V$ and $S\subset V$, then $\mu_{\marg S}\in\cP(\cX^S)$ denotes the marginal
distribution of $\mu$ on $S$.
That is, if $(x_s)_{s\in S}\in\cX^S$, then
	$$\mu_{\marg S}((x_s)_{s\in S})=\sum_{(x_s)_{s\in V\setminus S}\in\cX^{V\setminus S}}\mu((x_s)_{s\in V}).$$
If $S=\{v\}$ for some $v\in V$, then we briefly write $\mu_{\marg v}$ rather than $\mu_{\marg\{v\}}$.

The entropy of a probability measure $\mu\in\cP(\cX)$ is denoted by $H(\mu)$.
Thus, with the convention that $0\ln0=0$ we have $H(\mu)=-\sum_{x\in\cX}\mu(x)\ln\mu(x)$.
Further, agreeing that $0\ln\frac00=0$ as well, we recall that the Kullback-Leibler divergence of $\mu,\nu\in\cP(\cX)$ is
	\begin{align*}
	\KL\nu\mu&=\sum_{x\in\cX}\nu(x)\ln\frac{\nu(x)}{\mu(x)}\in[0,\infty].
	\end{align*}

We are going to work with probability measures on sets $\Omega^n$ for a (small) finite $\Omega$ and a large integer $n$ a lot.
If $\mu\in\cP(\Omega^n)$, then we write $\SIGMA_\mu,\TAU_\mu$ for two independent samples from $\mu$.
Where $\mu$ is obvious from the context we just write $\SIGMA,\TAU$.
Additionally, if $X(\SIGMA)$ is a random variable, then $\bck{X(\SIGMA)}_\mu=\sum_{\sigma\in\Omega^n}\mu(\sigma)X(\sigma)$ stands
for the expectation of $X$ with respect to $\mu$.
Further, if $\sigma\in\Omega^n$, $\emptyset\neq S\subset[n]$ and $\omega\in\Omega$, then we let
	$$\sigma[\omega|S]={|\sigma^{-1}(\omega)\cap S|}/{|S|}.$$
Thus, $\sigma[\nix|S]$ is a probability distribution on $\Omega$, namely the distribution of $\sigma(\vec x)$ for a random $\vec x\in S$.
If $S=\{x\}$ for some $x\in[n]$, then we just write $\sigma[\omega|x]$ rather than $\sigma[\omega|\{x\}]$.
Clearly, $\sigma[\omega|x]=\vecone\{\sigma(x)=\omega\}$.

We use the $\bck\nix_\mu$ notation for averages over $\mu\in\cP(\Omega^n)$ to avoid confusion with averages over other, additional random quantities,
for which we reserve the common symbols $\Erw[\nix]$, $\pr[\nix]$.
Furthermore, we frequently work with conditional expectations.
Hence, let us recall that for a probability space $(\cX,\cA,\pr)$, a random variable $X:\cX\to\RR$ and a $\sigma$-algebra $\cF\subset\cA$ the conditional expectation
$\Erw[X|\cF]$ is a $\cF$-measurable random variable on $\cX\to\RR$ such that for every $\cF$-measurable event $F$ we have
	$\Erw[\vecone\{F\}\Erw[X|\cF]]=\Erw[\vecone\{F\}X]$.
Moreover, recall that the conditional variance is defined as $\Var[X|\cF]=\Erw[X^2|\cF]-\Erw[X|\cF]^2$.

In line with the two previous paragraphs, if $Y:\Omega^n\to\RR$ is a random variable, $\mu\in\cP(\Omega^n)$ and $\cF$ is a $\sigma$-algebra on $\Omega^n$,
then we write $\bck{Y|\cF}_\mu$ for the conditional expectation, which is a $\cF$-measurable random variable
	$\sigma\in\Omega^n\mapsto\bck{Y|\cF}_\mu(\sigma)$.
Accordingly, for an event $A\subset\Omega^n$ with $\mu(A)>0$ we write $\bck{Y|A}_\mu=\bck{Y\vecone\{A\}}_\mu/\mu(A)\in\RR$
for the expectation of $Y$ given
$A$.

\section{Probability measures on the cube}\label{Sec_hom}

\noindent
In this section we present a general ``regularity lemma'' for probability measures on sets $\Omega^n$ for some finite set $\Omega$ and a large integer $n$
	(\Thm~\ref{Thm_decomp} below).

\subsection{Examples}
Needless to say, probability distributions on sets $\Omega^n$ for a small finite $\Omega$ and a large integer $n$ are ubiquitous.
To get an idea of what we might hope to prove about them in general, let us look at a few examples.

The simplest case certainly is a product measure $\mu=p^{\tensor n}$ with $p\in\cP(\Omega)$.
By the Chernoff bound, for any fixed $\eps>0$ there is $n_0=n_0(\eps,\Omega)>0$ such that for $n>n_0$ we have
	\begin{align}\label{eqApprox1}
	\bck{\TV{\SIGMA[\nix|S]-p}}_\mu&<\eps&\mbox{for every $S\subset[n]$ such that $|S|\geq\eps n$}.
	\end{align}
In words, if we fix a large enough set $S$ of coordinates and then choose $\SIGMA$ randomly, then
with probability close to one the empirical distribution on $S$ will be close to $p$.

As a twist on the previous example, let $p\in\cP(\Omega)$, assume that $n$ is a square and define a measure $\mu$
by letting
	\begin{align*}
	\mu(\omega_1,\ldots,\omega_n)&=\prod_{i=0}^{\sqrt n-1}\brk{p(\omega_{1+i\sqrt n})
			\vecone\{\forall j\in[\sqrt n]:\omega_{j+i\sqrt n}=\omega_{1+i\sqrt n}\}}.
	\end{align*} 
In words, the coordinates come in blocks of size $\sqrt n$.
While the values of all the coordinates in one block coincide and have distribution $p$, the coordinates in different blocks are independent.
Although $\mu$ is not a product distribution,  (\ref{eqApprox1}) is satisfied for any fixed $\eps>0$ and large enough $n$.
Furthermore, if for a fixed $k>1$ we choose $\vec x_1,\ldots,\vec x_k\in[n]$ uniformly and independently, then 
	\begin{align}\label{eqApprox2}
	\Erw\TV{\mu_{\marg\{\vec x_1,\ldots,\vec x_k\}}-
		\mu_{\marg\vec x_1}\tensor\cdots\tensor\mu_{\marg\vec x_k}}&<\eps,
	\end{align}
provided that $n>n_1(\eps,k,\Omega)$ is sufficiently large.
This is because for large enough $n$ it is unlikely that two of the randomly chosen $\vec x_1,\ldots,\vec x_k$ belong to the same block.

As a third example, consider the set $\Omega=\{0,1\}$ and the measure $\mu$ defined by
	\begin{align*}
	\mu^{(0)}(\omega_1,\ldots,\omega_n)&=\bcfr13^{\sum_{i=1}^n\omega_i}\bcfr23^{n-\sum_{i=1}^n\omega_i},&
	\mu^{(1)}(\omega_1,\ldots,\omega_n)&=\bcfr23^{\sum_{i=1}^n\omega_i}\bcfr12^{n-\sum_{i=1}^n\omega_i},&
	\mu&=\frac12(\mu^{(0)}+\mu^{(1)}).
	\end{align*}
All the marginals $\mu_{\marg i}$, $i\in[n]$, are equal to the uniform distribution on $\{0,1\}$.
But of course the uniform distribution on $\Omega^n$ is a horrible approximation to $\mu$.
Indeed, by the Chernoff bound with overwhelming probability a point $(\omega_1,\ldots,\omega_n)$ drawn from $\mu$
either satisfies $\frac1n\sum_{i=1}^n\omega_i\sim1/3$ or $\frac1n\sum_{i=1}^n\omega_i\sim2/3$.
However, the {\em conditional} distribution given, say, $\frac1n\sum_{i=1}^n\omega_i\leq1/2$,  is close to a product measure.
Thus, $\mu$ induces a decomposition of $\Omega^n$ into two ``states''
$S_0=\{\frac1n\sum_{i=1}^n\omega_i\leq1/2\}$, $S_1=\{\frac1n\sum_{i=1}^n\omega_i>1/2\}$ such that
 $\mu[\nix|S_0]$, $\mu[\nix|S_1]$ are close to product measures.

As a final example, consider $\Omega=\{0,1\}$, assume that $n$ is even and define $\mu\in\cP(\Omega^n)$ by letting
	\begin{align*}
	\mu(\omega_1,\ldots,\omega_n)&=\bcfr12^{n/2}\bcfr{1}{3}^{\sum_{i>n/2}\omega_i}\bcfr{2}{3}^{n/2-\sum_{i>n/2}\omega_i}.
	\end{align*}
In words, $\mu$ is a product measure with marginal distribution $\Be(1/2)$ on the first $n/2$ coordinates and $\Be(1/3)$ on the other coordinates.
Clearly, $\mu$ satisfies (\ref{eqApprox1}) with $p=\Be(1/2)$ for sets $S\subset[n/2]$ and with $p=\Be(1/3)$ for sets $S\subset[n]\setminus [n/2]$,
provided that $n$ is large.

In summary, the following picture emerges.
The conditions (\ref{eqApprox1}) and (\ref{eqApprox2}) are proxies for saying that a given measure $\mu$ resembles a product measure.
Furthermore, in order to obtain from a given $\mu$ measures that satisfy (\ref{eqApprox1}) or (\ref{eqApprox2})
it may be necessary to decompose the space $\Omega^n$ into ``states'' so that the conditional distributions have these properties.
In addition, because different coordinates may have different marginal distributions, for (\ref{eqApprox1}) to hold it may be necessary to partition
the set $[n]$ of coordinates.

\subsection{Homogeneity}
The main result of this section shows that by partitioning the space $\Omega^n$ and/or the set $[n]$ of coordinates it is always
possible to ``approximate'' a given measure $\mu$ by measures that satisfy (\ref{eqApprox1}) 
for some suitable $p$ as well as (\ref{eqApprox2}).
In fact, the number of parts that we have to partition $[n]$ and $\Omega^n$ into is bounded only in terms of the desired accuracy
but independently of $n$.

Let us introduce some terminology.
If $\vec V=(V_1,\ldots,V_k)$ is a partition of some set $V$, then we call $\#\vec V=k$ the \bemph{size} of $\vec V$.
Furthermore, a partition $\vec W=(W_1,\ldots,W_l)$ \bemph{refines} another partition $\vec V=(V_1,\ldots,V_k)$
if for each $i\in[l]$ there is $j\in[k]$ such that $W_i\subset V_j$.

For $\eps>0$ we say that $\mu\in\cP(\Omega^n)$ is \bemph{$\eps$-regular} on a set $U\subset[n]$ if
for every subset $S\subset U$ of size $|S|\geq\eps|U|$ we have
$$\bck{\TV{\SIGMA[\nix|S]-\SIGMA[\nix|U]}}_{\mu}<\eps.$$
Further, $\mu$ is \bemph{$\eps$-regular}  with respect to a partition $\vec V$ if
	there is a set $J\subset[\#\vV]$ such that $\sum_{i\in[\#\vV]\setminus J}|V_i|<\eps n$ and such that $\mu$ is $\eps$-regular on $V_i$ for all $i\in J$.
Additionally, if $\vec V$ is a partition of $[n]$ and $\vec S$ is a partition of $\Omega^n$, then
we say that $\mu$ is \bemph{$\eps$-homogeneous} with respect to $(\vec V,\vec S)$ if there is a subset $I\subset[\#\vec S]$ such that the following is true.
\begin{description}
\item[HM1] We have $\mu(S_i)>0$ for all $i\in I$ and $\sum_{i\in[\#\vS]\setminus I}\mu(S_i)<\eps$.
\item[HM2] for all $i\in[\#\vec S]$ and $j\in[\#\vec V]$ we have
	$\max_{\sigma,\sigma'\in S_i}\TV{\sigma[\nix|V_j]-\sigma'[\nix|V_j]}<\eps.$
\item[HM3]  for all $i\in I$ the measure $\mu[\nix|S_i]$ is $\eps$-regular with respect to $\vec V$.
\item[HM4] $\mu$ is $\eps$-regular with respect to $\vV$.
\end{description}

\begin{theorem}\label{Thm_decomp}
For any $\eps>0$ there exists $N=N(\eps,\Omega)>0$ such that for every $n>N$, any measure $\mu\in\cP(\Omega^n)$
and any partition $\vV_0$ of $[n]$ of size $\#\vV_0\leq1/\eps$ the following is true.
There exist a refinement $\vV$ of $\vV_0$ and a partition $\vS$ of $\Omega^n$ such that
$\#\vec V+\#\vec S\leq N$ and such that
$\mu$ is $\eps$-homogeneous with respect to $(\vec V,\vec S)$.
\end{theorem}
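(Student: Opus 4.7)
The plan is to prove \Thm~\ref{Thm_decomp} by an energy increment argument in the spirit of \Szemeredi's regularity lemma. I would define the \emph{energy} of a pair $(\vV, \vS)$ as
$$
\Phi(\vV, \vS) \;=\; \sum_{i \in [\#\vS],\, j \in [\#\vV]} \mu(S_i)\, \frac{|V_j|}{n} \sum_{\omega \in \Omega} p_{i,j}(\omega)^2,
$$
where $p_{i,j}(\omega) = \bck{\sigma[\omega \mid V_j]}_{\mu[\nix \mid S_i]}$ is the averaged empirical frequency of $\omega$ on $V_j$ under the conditional measure $\mu[\nix \mid S_i]$. Since each $p_{i,j}$ is a probability distribution on $\Omega$ we have $\Phi \in [0,1]$, and the identity $\lambda \|q_1\|_2^2 + (1-\lambda)\|q_2\|_2^2 - \|\lambda q_1 + (1-\lambda) q_2\|_2^2 = \lambda(1-\lambda)\|q_1 - q_2\|_2^2$ guarantees that any refinement of either $\vV$ or $\vS$ weakly increases $\Phi$, with the quantitative gain controlled by how much the corresponding averaged marginals differ.

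The iteration would be driven by choosing $I = \{i : \mu(S_i) \geq \eps/\#\vS\}$ at each stage, which makes HM1 automatic. Starting from $(\vV, \vS) = (\vV_0, \cbc{\Omega^n})$ I would then (i) whenever HM2 fails, refine $\vS$ by sorting each $\sigma \in \Omega^n$ according to the marginal vectors $(\sigma[\nix \mid V_j])_{j \leq \#\vV}$ rounded at resolution $\delta = \delta(\eps,|\Omega|)$, which multiplies $\#\vS$ by at most $(|\Omega|/\delta)^{|\Omega| \cdot \#\vV}$; and (ii) whenever HM4 or HM3 fails via some $i \in I$, $V_j$, and $S \subset V_j$ with $|S| \geq \eps|V_j|$ and $\bck{\TV{\sigma[\nix \mid S] - \sigma[\nix \mid V_j]}}_{\mu[\nix \mid S_i]} \geq \eps$, refine $\vV$ by splitting $V_j$ into $S$ and $V_j \setminus S$. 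Using the elementary inequality $\|q_1 - q_2\|_2 \geq 2\TV{q_1 - q_2}/\sqrt{|\Omega|}$ together with the fact that $\lambda = |S|/|V_j|$ necessarily lies in $[\eps, 1-\eps/2]$, each such refinement raises $\Phi$ by at least some $\eta(\eps, |\Omega|) > 0$. Since $\Phi \leq 1$ throughout, the loop terminates after $O(1/\eta)$ rounds, and propagating the explicit size increases through these rounds yields the desired bound $\#\vV + \#\vS \leq N(\eps, |\Omega|)$, which is independent of $n$.

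The main obstacle I anticipate is step (ii) for HM3: its failure condition bounds an \emph{average} of total variations from below, whereas the $\Phi$-gain from splitting $V_j$ alone is governed by the difference of the averaged marginals $p_{i,S}$ and $p_{i,V_j \setminus S}$, which can in principle vanish through cancellation. In that scenario the excess total variation must be absorbed into genuine variability of $\sigma \mapsto \sigma[\nix \mid S]$ across $\sigma \in S_i$, so the correct remedy is to first refine $\vV$ so that $S$ becomes a block and then refine $\vS$ by discretizing the vector $\sigma \mapsto \sigma[\nix \mid S]$; a second-moment computation then extracts the promised energy gain from the within-state variance of this vector. Interleaving this refinement with the subsequent re-establishment of HM2 — which typically demands another round of $\vS$-refinement — while keeping the partition sizes under control at each step, is the one genuinely delicate piece of bookkeeping in the argument.
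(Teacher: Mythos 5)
Your argument is a genuine variant of the paper's, and the difference matters precisely at the spot you yourself flag. The paper's potential is a \emph{conditional variance} of the per-configuration empirical marginals,
$$\ind_\mu(\vec V)=\frac{1}{|\Omega|n}\sum_{\omega\in\Omega}\sum_{j}\sum_{x\in V_j}\bck{(\SIGMA[\omega|x]-\SIGMA[\omega|V_j])^2}_\mu
=\frac{1}{|\Omega|}\Bigl(1-\frac1n\sum_j|V_j|\bigl\langle\textstyle\sum_\omega\SIGMA[\omega|V_j]^2\bigr\rangle_\mu\Bigr),$$
which depends only on $\vec V$ (not $\vec S$) and \emph{decreases} under refinement; the partition $\vec S$ is never iterated independently but is always re-derived as $\vec S(\vec V)$ by rounding the vectors $(\sigma[\nix\,|\,V_j])_j$ into a fixed finite mesh of $\cP(\Omega)$, which makes {\bf HM2} automatic. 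Your $\Phi(\vec V,\vec S)$ is instead the $L^2$-norm of the \emph{averaged} marginals $p_{i,j}=\bck{\SIGMA[\nix\,|\,V_j]}_{\mu[\nix|S_i]}$. The crucial structural difference is the order of operations: the paper squares \emph{inside} $\bck{\,\cdot\,}_\mu$ whereas you square \emph{outside}. Only the former quantity is sensitive to the per-$\sigma$ fluctuation $\bck{\TV{\SIGMA[\nix|S]-\SIGMA[\nix|V_j]}}_{\mu[\nix|S_i]}$ that appears in the definition of $\eps$-regularity, and this is exactly how the paper extracts a guaranteed index decrement of order $\eps^4/|\Omega|^3$ (via Jensen applied to $\bck{(\SIGMA[\omega|S]-\SIGMA[\omega|V_j])^2}_\mu\geq\bck{|\SIGMA[\omega|S]-\SIGMA[\omega|V_j]|}_\mu^2$) whenever regularity fails. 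Your claim that ``each such refinement raises $\Phi$ by at least some $\eta(\eps,|\Omega|)>0$'' is therefore false as stated: when $p_{i,S}$ and $p_{i,V_j\setminus S}$ cancel, the $\vV$-split alone gives no $\Phi$-gain at all, as you note yourself.

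The workaround you sketch — making $S$ a cell of $\vec V$, then refining $\vec S$ by discretizing $\sigma\mapsto\sigma[\nix\,|\,S]$, and extracting the gain from the within-state second moment — is morally sound and would likely succeed with effort, but it is genuinely harder than you indicate. Concretely, after splitting off a new state the quantity $\mu(S_{i'})$ for the sub-states can fall below $\eps/\#\vS$, shrinking $I$; restoring {\bf HM2} requires a further $\vS$-refinement across \emph{all} cells of $\vec V$, not just $S$; and one must show that the \emph{combined} effect of these refinements still nets a $\Phi$-gain bounded below in terms of $\eps$ and $|\Omega|$ alone. The paper avoids all three issues simultaneously by (a) using a $\vV$-only potential with the squaring inside the $\mu$-average, so the decrement comes straight from Lemma~\ref{Lemma_homreg} applied to each conditional measure $\mu[\nix|S_i]$ and then averaged via Bayes' rule, and (b) always defining $\vec S$ as a deterministic function of $\vec V$, so there is never a separate $\vS$-refinement to interleave. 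In short: you have correctly located the soft spot, but your energy functional is not the right one to drive the iteration on its own, and the fix you outline would need to be carried out carefully — a step that the paper's choice of conditional-variance index renders unnecessary.
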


Informally speaking, \Thm~\ref{Thm_decomp} shows that any probability measure $\mu\in\cP(\Omega^n)$  admits a partition $(\vec V,\vec S)$
such that the following is true.
Almost the entire probability mass of $\mu$ belongs to parts $S_i$ such that the conditional measure $\mu[\nix|S_i]$ is $\eps$-regular w.r.t.\ $\vec V$.
This means that almost every coordinate $x\in[n]$ belongs to a class $V_j$ such that
for every ``large'' $U\subset V_j$ for
$\SIGMA$ chosen from $\mu[\nix|S_i]$ very likely the empirical distribution $\SIGMA[\nix|U]$ is close to the 
marginal distribution $\bck{\SIGMA[\nix|V_j]}_{\mu[\nix|S_i]}$ of the entire class.

\Thm~\ref{Thm_decomp} and its proof, which we defer to \Sec~\ref{Sec_decomp}, are inspired by \Szemeredi's regularity lemma~\cite{Szemeredi}.
Let us proceed to state a few consequences of \Thm~\ref{Thm_decomp}.

A \bemph{$(\eps,k)$-state} of $\mu$ is a set $S\subset\Omega^n$ such that $\mu(S)>0$ and
	\begin{align*}
	\frac1{n^k}\sum_{x_1,\ldots,x_k\in[n]}\TV{\mu_{\downarrow\{x_1,\ldots,x_k\}}[\nix|S]-\mu_{\downarrow x_1}[\nix|S]\tensor
		\cdots\tensor \mu_{\downarrow x_k}[\nix|S]}<\eps.
	\end{align*}
In other words, if we choose $\vec x_1,\ldots,\vec x_k\in[n]$ independently and uniformly at random, then the expected total variation distance
between the joint distribution $\mu_{\downarrow\{\vec x_1,\ldots,\vec x_k\}}[\nix|S]$ of $\vec x_1,\ldots,\vec x_k$
and the product
$\mu_{\downarrow\vec x_1}[\nix|S]\tensor\cdots\tensor \mu_{\downarrow \vec x_k}[\nix|S]$
 of the marginal distributions is small.

\begin{corollary}\label{Thm_states}
For any $\eps>0$, $k\geq2$ there exists $\eta=\eta(\eps,k,\Omega)>0$ such that for every $n>1/\eta$ any measure $\mu\in\cP(\Omega^n)$
has pairwise disjoint $(\eps,k)$-states $S_1,\ldots,S_N$ such that
	$\mu(S_i)\geq\eta$ for all $i\in[N]$ and $\sum_{i=1}^N\mu(S_i)\geq1-\eps$.
\end{corollary}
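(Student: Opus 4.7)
The plan is to derive the corollary from Theorem~\ref{Thm_decomp} by declaring the promised $(\eps,k)$-states to be the ``heavy'' cells of the partition $\vec S$ produced by that theorem. Concretely, I will apply \Thm~\ref{Thm_decomp} with the trivial partition $\vec V_0=\{[n]\}$ and a parameter $\eps'=\eps'(\eps,k,\Omega)\ll\eps$ to be fixed later; this yields partitions $\vec V,\vec S$ of total size at most $N'=N'(\eps',\Omega)$ with respect to which $\mu$ is $\eps'$-homogeneous, together with the index set $I$ from HM1. Setting $I'=\{i\in I:\mu(S_i)\geq\eta\}$ for $\eta$ a small fraction of $\eps/N'$ and $\eps'<\eps/2$, we have $\sum_{i\notin I'}\mu(S_i)\le \eps'+N'\eta<\eps$, so it only remains to verify that every $S_i$ with $i\in I'$ is a $(\eps,k)$-state.

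Fix such an $i$ and write $\nu=\mu[\nix|S_i]$. For each good class $V_j$ (those in the index set from HM3) set $p_j=\bck{\SIGMA[\nix|V_j]}_\nu\in\cP(\Omega)$; HM2 says $\sigma[\omega|V_j]$ is within $\eps'$ of $p_j(\omega)$ for all $\sigma\in S_i$. The first step is a one-point bound: for most $x\in V_j$, $\nu_{\marg x}\approx p_j$. Indeed HM3 gives $\bck{\TV{\SIGMA[\nix|U]-\SIGMA[\nix|V_j]}}_\nu<\eps'$ for every $U\subseteq V_j$ of size at least $\eps'|V_j|$, so combining with HM2, $|U|^{-1}\sum_{x\in U}\nu_{\marg x}[\omega]=\bck{\SIGMA[\omega|U]}_\nu$ lies within $2\eps'$ of $p_j(\omega)$. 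A pigeonhole/sorting argument (take $U=\{x:\nu_{\marg x}[\omega]>p_j(\omega)+2\eps'\}$ and likewise for the lower tail) then shows that at most $O(|\Omega|\eps'|V_j|)$ coordinates in $V_j$ violate $\TV{\nu_{\marg x}-p_j}\leq O(|\Omega|\eps')$. Aggregating over good classes and absorbing the mass $<\eps'$ in bad classes, $\Erw_x\TV{\nu_{\marg x}-p_{j(x)}}=O(|\Omega|\eps')$, where $j(x)$ is the class containing $x$.

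The second step extends the one-point bound to joint marginals. Fix $\vec\omega\in\Omega^k$ and $\vec x\in\prod_t V_{j_t}$ with all $j_t$ good and all coordinates distinct; the contribution of configurations failing these conditions is $O(k\eps'+k^2/n)$. Repeating the argument of the previous paragraph while holding $x_2,\ldots,x_k$ fixed, HM3 and HM2 force $\bck{\SIGMA[\omega_1|U]\prod_{t\geq 2}\vecone\{\SIGMA(x_t)=\omega_t\}}_\nu$ to agree up to $O(\eps')$ with $p_{j_1}(\omega_1)\cdot\nu_{\marg(x_2,\ldots,x_k)}[\omega_2,\ldots,\omega_k]$ for every large $U\subseteq V_{j_1}$, hence by sorting, for all but $O(\eps'|V_{j_1}|)$ values of $x_1$ one has $\nu_{\marg\vec x}[\vec\omega]\approx p_{j_1}(\omega_1)\cdot\nu_{\marg(x_2,\ldots,x_k)}[\omega_2,\ldots,\omega_k]$. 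Iterating over $t=1,\ldots,k$ gives $\nu_{\marg\vec x}[\vec\omega]\approx\prod_t p_{j_t}(\omega_t)$ for most $\vec x$, and invoking the one-point bound together with the telescoping inequality $|\prod_t a_t-\prod_t b_t|\leq\sum_t|a_t-b_t|$ for $a_t,b_t\in[0,1]$ replaces $p_{j_t}(\omega_t)$ by $\nu_{\marg x_t}[\omega_t]$. Summing over $\vec\omega$ produces a final bound $\Erw_{\vec x}\TV{\nu_{\marg\vec x}[\nix|S_i]-\bigotimes_t\nu_{\marg x_t}[\nix|S_i]}=O_{k,|\Omega|}(\eps')$, which is less than $\eps$ once $\eps'$ is chosen sufficiently small.

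The main obstacle is the $k$-fold iteration in the second step: the sorting lemma must be applied to one coordinate at a time while keeping the others frozen, and the accumulated errors from each iteration must be tracked so that only the term $O_{k,|\Omega|}(\eps')$ survives. Once the $k=2$ template is set up cleanly this is routine, and choosing $\eps'\ll\eps/(k|\Omega|^{k+1})$ together with $\eta\ll\eps/N'(\eps',\Omega)$ completes the proof.
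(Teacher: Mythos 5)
Your proof is correct and follows essentially the same route as the paper: apply \Thm~\ref{Thm_decomp}, keep the heavy cells of $\vec S$, and then use {\bf HM2} together with the $\eps$-regularity from {\bf HM3} to show (via a pigeonhole/sorting step and a coordinate-by-coordinate induction) that each heavy cell is an $(\eps,k)$-state. The only organizational difference is that the paper factors this last step out as a standalone statement (\Lem~\ref{Lemma_regularSymmetric}, whose proof contains exactly your inductive argument with the same small/large-mass dichotomy handled via Markov's inequality) and then simply cites it in the proof of the corollary, whereas you reprove it inline.
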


\noindent
Thus, we can chop the space $\Omega^n$ into subsets $S_1,\ldots,S_N$, $N\leq1/\eta$, that capture almost the entire probability mass
such that $\mu[\nix|S_i]$ ``resembles a product measure'' for each $i\in[N]$.
We prove \Cor~\ref{Thm_states} in \Sec~\ref{Sec_states}.

Let us call $\mu$ \bemph{$(\eps,k)$-symmetric} if $S=\Omega^n$ itself is an $(\eps,k)$-state.

\begin{corollary}\label{Cor_states}
For any $\eps,k$ there exist $\delta,\eta>0$ such that for all $n>1/\eta$ and all $\mu\in\cP(\Omega^n)$ the following is true.
	If for any two $(\delta,k)$-states $S_1,S_2$ with $\mu(S_1),\mu(S_2)\geq\eta$ we have
	\begin{equation}\label{eqCor_statesAssumption}
	\frac1n\sum_{x\in[n]}\TV{\mu_{\downarrow x}[\nix|S_1]-\mu_{\downarrow x}[\nix|S_2]}<\delta,
	\end{equation}
	then $\mu$ is $(\eps,k)$-symmetric.
\end{corollary}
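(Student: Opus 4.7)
The plan is to reduce to Corollary~\ref{Thm_states}. Given $\eps$ and $k$, pick $\delta_0 > 0$ (a function of $\eps, k, \Omega$, of order $\eps/2^k$, to be fixed at the end) and apply Corollary~\ref{Thm_states} with parameters $\delta_0$ and $k$ to obtain a threshold $\eta_0 = \eta_0(\delta_0, k, \Omega) > 0$ and, for every $n > 1/\eta_0$, pairwise disjoint $(\delta_0, k)$-states $S_1, \ldots, S_N$ with $p_i := \mu(S_i) \geq \eta_0$ and $p_0 := 1 - \sum_{i \geq 1} p_i \leq \delta_0$. Set $\eta = \eta_0$ and $\delta = \delta_0$ in the statement, so that the hypothesis (\ref{eqCor_statesAssumption}) applies to every pair $S_i, S_{i'}$. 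Write $\mu_{i, \marg x} = \mu_{\marg x}[\nix|S_i]$, set $\tilde p_i = p_i / (1 - p_0)$, $\tilde q_x = \sum_{i \geq 1} \tilde p_i \mu_{i, \marg x}$, and introduce the signed measures $\epsilon_{i, x} = \mu_{i, \marg x} - \tilde q_x$. The renormalization yields the crucial identity $\sum_{i \geq 1} \tilde p_i \epsilon_{i, x} = 0$; convexity together with (\ref{eqCor_statesAssumption}) gives $\frac{1}{n}\sum_x \TV{\epsilon_{i, x}} < \delta$ for every $i \geq 1$; and $\TV{\mu_{\marg x} - \tilde q_x} \leq 2 p_0 \leq 2\delta_0$.

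To bound $\frac{1}{n^k}\sum_{x_1,\ldots,x_k} \TV{\mu_{\marg\{x_1,\ldots,x_k\}} - \mu_{\marg x_1} \tensor \cdots \tensor \mu_{\marg x_k}}$ I perform a hybrid expansion. Starting from the exact decomposition $\mu_{\marg\{x_1,\ldots,x_k\}} = (1-p_0) \sum_{i \geq 1} \tilde p_i \mu_{i, \marg\{x_1,\ldots,x_k\}} + p_0 \mu_{0, \marg\{x_1,\ldots,x_k\}}$, the $(\delta_0, k)$-state property of each $S_i$ lets me replace $\mu_{i, \marg\{x_1,\ldots,x_k\}}$ by $\bigotimes_{j=1}^k \mu_{i, \marg x_j}$ at a total average TV cost of at most $\delta_0$. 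Substituting $\mu_{i, \marg x_j} = \tilde q_{x_j} + \epsilon_{i, x_j}$ and expanding the tensor product into $2^k$ terms indexed by $T \subseteq [k]$ yields
\[
\sum_{i \geq 1} \tilde p_i \bigotimes_{j=1}^{k} \mu_{i, \marg x_j} \;=\; \sum_{T \subseteq [k]} \bigotimes_{j \notin T} \tilde q_{x_j} \tensor \sum_{i \geq 1} \tilde p_i \bigotimes_{j \in T} \epsilon_{i, x_j}.
\]
The $T = \emptyset$ term is exactly $\bigotimes_{j} \tilde q_{x_j}$; every $|T| = 1$ term vanishes thanks to $\sum_i \tilde p_i \epsilon_{i, x} = 0$; and each $|T| \geq 2$ term has average TV norm at most $\sum_i \tilde p_i \prod_{j \in T}\frac{1}{n}\sum_{x_j}\TV{\epsilon_{i, x_j}} \leq \delta^{|T|}$ after using the independence of the $x_j$. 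Finally, a $k$-fold telescoping replaces $\bigotimes_j \tilde q_{x_j}$ by $\bigotimes_j \mu_{\marg x_j}$ at an additional cost of $2k p_0$.

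Collecting all contributions gives an average TV distance bounded by $\delta_0 + 2 k p_0 + \sum_{t=2}^{k} \binom{k}{t}\delta^t + O(p_0) = O(2^k \delta_0)$, which is less than $\eps$ once $\delta_0$ is chosen small enough depending only on $\eps$ and $k$. This establishes the $(\eps, k)$-symmetry of $\mu$. The main obstacle is the hybrid step: a naive triangle inequality applied to the comparison $\mu_{i, \marg x} \approx \mu_{\marg x}$ produces only a first-order error of size $\delta$, which is of the same order as the hypothesis itself and therefore conveys no usable improvement. The decisive move is the renormalization $\tilde p_i = p_i/(1-p_0)$, which forces the signed measures $\epsilon_{i, x}$ to have exact zero weighted mean, so that all first-order cross terms in the tensor expansion cancel identically and only terms quadratic (or higher) in $\delta$ survive.
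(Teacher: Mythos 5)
Your proof is correct, but it takes a route that differs from the paper's. The paper does not invoke Corollary~\ref{Thm_states} at all: it goes back to Theorem~\ref{Thm_decomp} directly, obtains a $\gamma$-homogeneous partition $(\vV,\vS)$, uses property \textbf{HM2} together with the hypothesis~(\ref{eqCor_statesAssumption}) to show that the class marginals $\bck{\SIGMA[\nix|V_i]}_{\mu[\nix|S_j]}$ are nearly constant as $j$ ranges over the extensive parts of $\vS$, averages over $j$ to land on the bound $\sum_i\frac{|V_i|}n\bck{\TV{\SIGMA[\nix|V_i]-\bar\mu_i}}_\mu\leq 5\delta$, and finally applies Lemma~\ref{Lemma_regularSymmetric}. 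You instead treat Corollary~\ref{Thm_states} as a black box, producing a finite family of $(\delta_0,k)$-states, and then carry out an explicit tensor expansion around the renormalized mixture marginal $\tilde q_x$. Both arguments are sound; yours is more modular and gives an explicit bookkeeping of the error as a sum over subsets $T\subseteq[k]$, while the paper's stays closer to the regularity-lemma machinery that it reuses for the other corollaries.

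One remark on your concluding paragraph. You present the vanishing of the $|T|=1$ terms (forced by $\sum_i\tilde p_i\epsilon_{i,x}=0$) as the decisive move, arguing that the naive comparison $\mu_{i,\marg x}\approx\mu_{\marg x}$ gives ``only a first-order error of size $\delta$, which \dots conveys no usable improvement.'' But a first-order error is exactly what is needed here: the target is $(\eps,k)$-symmetry with $\eps$ fixed and $\delta=\delta_0$ is at our disposal. A plain telescoping replacement of each $\mu_{i,\marg x_j}$ by $\mu_{\marg x_j}$, with no centering, gives per coordinate and per state an average cost $\frac1n\sum_x\TV{\mu_{i,\marg x}-\mu_{\marg x}}\leq\sum_{j\geq1}p_j\cdot\frac1n\sum_x\TV{\mu_{i,\marg x}-\mu_{j,\marg x}}+p_0<\delta_0+p_0$ by~(\ref{eqCor_statesAssumption}), hence a total of $O(k\delta_0)$, which already suffices after choosing $\delta_0\ll\eps/k$. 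The paper's proof likewise settles for a bound of order $5\delta$ and simply takes $\delta$ small relative to $\eps$. The quadratic cancellation you identified is a genuine structural feature of the mixture decomposition, but it is not required for the statement at hand; it would matter only if one wanted a conclusion that is $o(\delta)$ in the hypothesis parameter itself.
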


\noindent
Thus, the entire measure $\mu$ ``resembles a product measure'' if extensive states have similar marginal distributions.
Conversely, we have the following.

\begin{corollary}\label{Cor_states2}
For any $\eps>0$ there is $\gamma >0$ such that for any $\eta>0$ there exists $\delta>0$ such that for all $n>1/\delta$ and all $\mu\in\cP(\Omega^n)$ the following is true.
	If $\mu$ is $(\delta,2)$-symmetric, then for any $(\gamma,2)$-state $S$ with $\mu(S)\geq\eta$ we have
	$$\frac1n\sum_{x\in[n]}\TV{\mu_{\downarrow x}[\nix|S]-\mu_{\downarrow x}}<\eps.$$
\end{corollary}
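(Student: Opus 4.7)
The plan is to turn the $(\delta,2)$-symmetry of $\mu$ into a variance-concentration statement for linear coordinate-wise functionals, and then use the law of total variance to deduce that conditioning on any event $S$ of mass at least $\eta$ cannot shift the ``average marginal'' by much. Perhaps surprisingly, the $(\gamma,2)$-state hypothesis on $S$ will not actually enter the argument, so I may take $\gamma = 1$, which renders the state hypothesis trivially true for every $S$ with $\mu(S) > 0$.

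The key object is $F(\SIGMA) = \frac1n \sum_{x\in[n]} \phi_x(\SIGMA(x))$, where $\phi_x : \Omega \to [-1,1]$ is an arbitrary family of test functions. Expanding the variance into a double sum of covariances and using the elementary estimate
$$ \abs{\Cov_\mu(\phi_x(\SIGMA(x)),\phi_y(\SIGMA(y)))} \leq 2\TV{\mu_{\marg\{x,y\}} - \mu_{\marg x}\tensor \mu_{\marg y}},$$
the $(\delta,2)$-symmetry of $\mu$ gives $\Var_\mu(F) \leq 2\delta$, uniformly over all such choices of $(\phi_x)_x$.

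Next, writing $p = \mu(S) \geq \eta$ and $q = 1-p$, I apply the law of total variance with respect to the partition $\{S, \bar S\}$ of $\Omega^n$: the ``between-class'' term $pq(\bck{F}_{\mu[\nix|S]} - \bck{F}_{\mu[\nix|\bar S]})^2$ is bounded by the total variance $\Var_\mu(F) \leq 2\delta$. Combining with the identity $\bck{F}_{\mu[\nix|S]} - \bck{F}_\mu = q(\bck{F}_{\mu[\nix|S]} - \bck{F}_{\mu[\nix|\bar S]})$ gives
$$\abs{\bck{F}_{\mu[\nix|S]} - \bck{F}_\mu} \leq \sqrt{2\delta q/p} \leq \sqrt{2\delta/\eta}.$$

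To close the loop I make the coordinate-dependent choice $\phi_x(\omega) = \sign\bc{\mu_{\marg x}[\omega|S] - \mu_{\marg x}(\omega)}$. With this $F$, the left-hand side above equals precisely $\frac2n \sum_x \TV{\mu_{\marg x}[\nix|S] - \mu_{\marg x}}$, so choosing $\delta < 2\eta\eps^2$ yields the claim. The only subtle point — and really the only nontrivial step — is that the $\phi_x$ must be allowed to depend on $x$; a single test function $\phi$ independent of $x$ would only control the empirical distributions of a random $\SIGMA$ under $\mu$ versus $\mu[\nix|S]$, not the per-coordinate $\ell^1$ average of marginal deviations we actually need. Supplying these coordinate-dependent signs is what converts the single linear functional controlled by the variance bound into the desired sum of absolute values.
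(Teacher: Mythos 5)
Your proof is correct, and it takes a genuinely different route from the paper's. The paper argues by contradiction: it extracts via the pigeonhole principle a set $W'\subset[n]$ of coordinates on which the marginal shift $\mu_{\marg x}[\omega|S]-\mu_{\marg x}(\omega)$ has a consistent sign and magnitude at least $\gamma/|\Omega|$, and then plays off an upper bound on $\Var_\mu(\SIGMA[\omega|W'])$ coming from the $(\delta,2)$-symmetry of $\mu$, a lower bound reflecting the shift, and a bound on the within-$S$ variance $\Var_{\mu[\nix|S]}(\SIGMA[\omega|W'])$ coming from the $(\gamma,2)$-state hypothesis. Your argument is direct and replaces the pigeonhole step with coordinate-dependent signs $\phi_x$, which is exactly what lets you control the sum of absolute marginal deviations by a single test functional; and you discard the within-$S$ variance entirely via the law-of-total-variance identity $\Var_\mu(F)=\Erw[\Var_\mu(F\mid\{S,\bar S\})]+\mu(S)(1-\mu(S))\bigl(\bck{F}_{\mu[\nix|S]}-\bck{F}_{\mu[\nix|\bar S]}\bigr)^2$. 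Consequently the $(\gamma,2)$-state hypothesis never enters, and you actually prove the slightly stronger statement that $(\delta,2)$-symmetry of $\mu$ alone forces $\frac1n\sum_{x\in[n]}\TV{\mu_{\marg x}[\nix|S]-\mu_{\marg x}}<\eps$ for \emph{every} $S$ with $\mu(S)\geq\eta$. (The paper's own chain of inequalities could in fact also be streamlined to drop the state hypothesis, by replacing the half-squared-difference estimate with the cruder $\bck{(X-a)^2}\geq(\bck{X}-a)^2$.) One bookkeeping point worth recording: the diagonal terms $x=y$ in your covariance expansion contribute at most $1/n<\delta$ to $\Var_\mu(F)$, so they do not affect the bound.
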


The proofs of Corollaries~\ref{Cor_states} and~\ref{Cor_states2} can be found in \Sec s~\ref{Sec_Cor_states} and~\ref{Sec_Cor_states2}, respectively.
Finally, in \Sec~\ref{Sec_Prop_tensorise} we prove the following fact that will be useful in \Sec~\ref{Sec_Bethe}.

\begin{proposition}\label{Prop_tensorise}
For any $\eps>0$ there exist $\delta>0$ such that for large enough $n$ the following is true.
If $\mu\in\cP(\Omega^n)$ is $(\delta,2)$-symmetric, then $\mu\tensor\mu\in\cP(\Omega^n\times\Omega^n)$ is $(\eps,2)$-symmetric.
\end{proposition}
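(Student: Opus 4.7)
The plan is to exploit the product structure directly. Think of $\mu \otimes \mu$ as a probability measure on $\Omega^{[n] \sqcup [n]} \cong \Omega^{2n}$, i.e.\ on $N := 2n$ coordinates, where the first $n$ ``left'' coordinates and the last $n$ ``right'' coordinates are mutually independent under $\mu \otimes \mu$. To verify the $(\eps,2)$-symmetry condition, I would estimate the average
\[
\frac{1}{N^2}\sum_{y_1,y_2\in[N]}\TV{(\mu\otimes\mu)_{\marg\{y_1,y_2\}}-(\mu\otimes\mu)_{\marg y_1}\tensor(\mu\otimes\mu)_{\marg y_2}}
\]
by splitting the pairs $(y_1,y_2)$ according to which of the two copies each coordinate lies in.

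The two ``cross-copy'' cases, in which $y_1$ and $y_2$ lie in different copies, contribute exactly $0$: by the independence of the two factors under $\mu\otimes\mu$, the joint marginal on $\{y_1,y_2\}$ equals the product of the two one-point marginals, so the TV distance vanishes. These two cases contain $2n^2$ ordered pairs, all with contribution $0$.

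The two ``same-copy'' cases are where all the work sits. After discarding the $2n$ diagonal pairs $y_1=y_2$, each of which contributes at most $1$ to the TV distance, the remaining sum in each case is bounded term by term by the analogous sum for $\mu$ itself on $[n]$. Hence by the $(\delta,2)$-symmetry of $\mu$, the combined contribution of the two same-copy cases to the sum above is at most $2\delta n^{2}+O(n)$. Dividing by $N^2=4n^2$ yields $\delta/2+O(1/n)$, so taking $\delta:=\eps$ (or any $\delta<2\eps$) and $n$ large enough gives the claim.

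There is no genuine obstacle here; the argument is essentially a bookkeeping exercise. The only subtle point to watch is the convention for $\mu_{\marg\{x_1,x_2\}}$ when $x_1=x_2$: interpreting it as a measure on $\Omega^2$ supported on the diagonal, the $O(n)$ diagonal terms each contribute at most $1$ to the TV distance and therefore only $O(1/n)$ to the overall average, which is negligible for large $n$.
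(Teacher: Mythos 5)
Your proof reads $\mu\tensor\mu$ as a measure on $\Omega^{2n}$ with $2n$ coordinates, with a ``left'' copy and a ``right'' copy of $[n]$ that are mutually independent. Under that reading the claim is indeed trivially true and your bookkeeping is correct. But this is not the reading the paper uses: $\mu\tensor\mu$ is treated as a measure on $(\Omega\times\Omega)^{n}$, i.e.\ $n$ coordinates taking values in the doubled alphabet $\Omega^\tensor=\Omega\times\Omega$. This is exactly the structure of the Gibbs measure $\mu_{\G^\tensor}=\mu_{\G}\tensor\mu_{\G}$ of the ``squared'' factor graph used in the lower-bound half of \Thm~\ref{Thm_symUpperBound}, which is where \Prop~\ref{Prop_tensorise} is invoked. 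In the paired interpretation each coordinate $x\in[n]$ simultaneously exposes $(\SIGMA(x),\TAU(x))$, so there is no ``cross-copy''/``same-copy'' dichotomy at all and your decomposition of the sum does not exist. The quantity to control is
\[
\frac1{n^2}\sum_{x,y\in[n]}\TV{(\mu\tensor\mu)_{\marg\{x,y\}}-(\mu\tensor\mu)_{\marg x}\tensor(\mu\tensor\mu)_{\marg y}},
\]
where $(\mu\tensor\mu)_{\marg x}=\mu_{\marg x}\tensor\mu_{\marg x}\in\cP(\Omega\times\Omega)$ already entangles the two copies, so the statement you proved neither coincides with nor implies the one the paper needs.

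For what it is worth, the correct interpretation also admits a short elementary argument, though not the one you ran. For $x\neq y$ write $\nu=\mu_{\marg\{x,y\}}$ and $\nu'=\mu_{\marg x}\tensor\mu_{\marg y}$; a coordinate permutation identifies $(\mu\tensor\mu)_{\marg\{x,y\}}$ with $\nu\tensor\nu$ and $(\mu\tensor\mu)_{\marg x}\tensor(\mu\tensor\mu)_{\marg y}$ with $\nu'\tensor\nu'$, and subadditivity of total variation under products gives $\TV{\nu\tensor\nu-\nu'\tensor\nu'}\le 2\TV{\nu-\nu'}$; averaging over $x,y$ then yields the claim with $\delta=\eps/2$, uniformly in $n$. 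The paper's own proof is quite different and much heavier: it feeds $\mu$ through \Thm~\ref{Thm_decomp} to get a homogeneous partition, uses \Cor~\ref{Cor_states2} to control marginals on each part, shows that $\mu\tensor\mu$ is $\alpha$-regular on these parts, and concludes via \Lem~\ref{Lemma_regularSymmetric}, which is why it carries the ``large enough $n$'' hypothesis. So the diagnosis is: your approach is of the right spirit (elementary, no regularity machinery), but the interpretation of the ambient space is wrong and the decomposition that makes your argument work disappears once the interpretation is corrected.
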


\subsection{Proof of \Thm~\ref{Thm_decomp}}\label{Sec_decomp}
Throughout this section we assume that $n$ is sufficiently large.
To prove \Thm~\ref{Thm_decomp} and guided by~\cite{Szemeredi}, we define the \bemph{index} of $\mu$ with respect to a partition $\vec V$ of $[n]$ as
	$$\ind_\mu(\vec V)=\frac{1}{|\Omega|n}\sum_{\omega\in\Omega}\sum_{j\in[\#\vec V]}\sum_{x\in V_j}
		\bck{(\SIGMA[\omega|x]-\SIGMA[\omega|V_j])^2}_\mu.$$
The index can be viewed as a conditional variance (cf.\ \cite{Tao}).
Indeed, choose $\vec x\in[n]$ uniformly and independently of $\SIGMA$.
Furthermore, let $\cF_{\vV}$ be the $\sigma$-algebra generated by 
the events $\{\vec x\in V_i\}$ for $i\in[\#\vV]$.
Writing $\Erw[\nix]$ and $\Var[\nix]$ for the expectation and variance with respect to the choice of $\vec x$ only, we see that
	$$\ind_\mu(\vV)=\frac1{|\Omega|}\sum_{\omega\in\Omega}\Erw\bck{\Var[\SIGMA[\omega|\vec x]|\cF_{\vV}]}_\mu.$$

\begin{lemma}\label{Lemma_refinement}
For any partition $\vec V$ of $[n]$ we have $\ind_\mu(\vec V)\in[0,1]$.
If $\vec W$ is a refinement of $\vec V$, then $\ind_\mu(\vec W)\leq\ind_\mu(\vec V)$.
\end{lemma}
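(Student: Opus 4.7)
The plan is to verify the two assertions using only elementary facts about variance. For $\ind_\mu(\vV)\in[0,1]$, nonnegativity is immediate since each summand is a square. For the upper bound, I would note that $\SIGMA[\omega|x]\in\{0,1\}$ and $\SIGMA[\omega|V_j]\in[0,1]$, so each $(\SIGMA[\omega|x]-\SIGMA[\omega|V_j])^2\leq 1$; the double sum $\sum_j\sum_{x\in V_j}$ visits every $x\in[n]$ exactly once and the outer sum over $\omega$ contributes the factor $|\Omega|$, yielding at most $|\Omega|n$ nonzero contributions, which is precisely the normalising factor.

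For monotonicity under refinement, the key tool is the bias--variance decomposition: for any real sequence $(a_x)_{x\in S}$ with mean $\bar a_S$ and any partition $S=T_1\sqcup\cdots\sqcup T_r$ with block means $\bar a_{T_i}$,
\[
\sum_{x\in S}(a_x-\bar a_S)^2=\sum_{i=1}^r\sum_{x\in T_i}(a_x-\bar a_{T_i})^2+\sum_{i=1}^r|T_i|(\bar a_{T_i}-\bar a_S)^2\geq\sum_{i=1}^r\sum_{x\in T_i}(a_x-\bar a_{T_i})^2.
\]
I would apply this pointwise in $\omega$ and in $\sigma\in\Omega^n$: for each class $V_j$ of $\vV$, the refinement $\vW$ splits $V_j$ into the sub-classes $W_i\subset V_j$; taking $a_x=\sigma[\omega|x]$, the block means are $\sigma[\omega|W_i]$ and the overall mean is $\sigma[\omega|V_j]$, so the above inequality yields
\[
\sum_{x\in V_j}(\sigma[\omega|x]-\sigma[\omega|V_j])^2\;\geq\;\sum_{W_i\subset V_j}\sum_{x\in W_i}(\sigma[\omega|x]-\sigma[\omega|W_i])^2.
\]
Summing over $j$ and $\omega$ and averaging over $\sigma\sim\mu$, the left-hand side becomes $|\Omega|n\cdot\ind_\mu(\vV)$ while the right-hand side becomes $|\Omega|n\cdot\ind_\mu(\vW)$, giving the claim.

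Equivalently, one can invoke the conditional-variance reformulation spelled out in the excerpt: since $\vW$ refines $\vV$ we have $\cF_\vV\subset\cF_\vW$, and the law of total variance yields $\Var[\SIGMA[\omega|\vec x]\mid\cF_\vV]=\Erw[\Var[\SIGMA[\omega|\vec x]\mid\cF_\vW]\mid\cF_\vV]+\Var[\Erw[\SIGMA[\omega|\vec x]\mid\cF_\vW]\mid\cF_\vV]$; taking expectations over $\vec x$ and $\sigma$ makes the second term a nonnegative remainder and produces the same inequality. I expect no genuine obstacle here: the lemma is essentially a repackaging of the tower property of conditional variance, and the only care needed is to correctly match the combinatorial bookkeeping (count of terms, normalisation by $|\Omega|n$) with the variance identity.
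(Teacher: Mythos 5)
Your proof is correct and, in its second formulation, is essentially the paper's argument verbatim: the paper establishes $\ind_\mu(\vV)\in[0,1]$ as immediate from the definition and proves monotonicity via $\cF_\vV\subset\cF_\vW$ together with the tower property of conditional variance. Your elementary bias--variance decomposition in the first part is a hands-on rewriting of the same identity, so there is no real divergence in approach.
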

\begin{proof}
The fact that $\ind_\mu(\vec V)\in[0,1]$ is immediate from the definition.
Moreover, if $\vW$ refines $\vV$, then $\cF_{\vV}\subset\cF_{\vW}$.
Consequently,
	$\Erw\bck{\Var[\SIGMA[\omega|\vec x]|\cF_{\vW}]}_\mu\leq\Erw\bck{\Var[\SIGMA[\omega|\vec x]|\cF_{\vV}]}_\mu$.
Averaging over $\omega\in\Omega$ yields $\ind_\mu(\vec W)\leq\ind_\mu(\vec V)$.
\end{proof}

\begin{lemma}\label{Lemma_homreg}
If $\mu\in\cP(\Omega^n)$ fails to be $\eps$-regular with respect to $\vec V$, then
there is a refinement $\vec W$ of $\vec V$ such that $\#\vec W\leq2\#\vec V$ and
	$\ind_\mu(\vec W)\leq\ind_\mu(\vec V)-\eps^4/(4|\Omega|^3).$
\end{lemma}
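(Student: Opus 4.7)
The plan is to construct the refinement $\vW$ by splitting each class $V_i$ of $\vV$ on which $\mu$ fails to be $\eps$-regular into two carefully chosen pieces, and then verify that this drops the index by the claimed amount via a Pythagorean identity of the sort familiar from \Szemeredi's regularity lemma.

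First I would unpack the definition. Let $J\subset[\#\vV]$ consist of those $i$ for which $\mu$ is $\eps$-regular on $V_i$. Since by hypothesis $\mu$ fails to be $\eps$-regular with respect to $\vV$, the complement satisfies $\sum_{i\in[\#\vV]\setminus J}|V_i|\geq \eps n$. For each such $i$ the definition of non-$\eps$-regularity on $V_i$ supplies a witness $S_i\subset V_i$ with $|S_i|\geq \eps|V_i|$ and $\bck{\TV{\SIGMA[\nix|S_i]-\SIGMA[\nix|V_i]}}_\mu \geq \eps$. Define $\vW$ by keeping $V_i$ whenever $i\in J$, and replacing $V_i$ by the two parts $V_{i,1}=S_i$, $V_{i,2}=V_i\setminus S_i$ whenever $i\notin J$. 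Then $\vW$ refines $\vV$ and $\#\vW\leq 2\#\vV$.

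For the energy computation, the orthogonality identity $|V|\sigma[\omega|V]=|V_1|\sigma[\omega|V_1]+|V_2|\sigma[\omega|V_2]$ valid for any split $V=V_1\cup V_2$ yields the pointwise identity
\[
\sum_{x\in V}(\sigma[\omega|x]-\sigma[\omega|V])^2 = \sum_{r=1,2}\sum_{x\in V_r}(\sigma[\omega|x]-\sigma[\omega|V_r])^2 + \sum_{r=1,2}|V_r|(\sigma[\omega|V_r]-\sigma[\omega|V])^2.
\]
Summing this over the split classes $i\notin J$, over $\omega\in\Omega$, and averaging against $\mu$, the cross-classes and the unchanged $i\in J$ contributions cancel, leaving
\[
\ind_\mu(\vV)-\ind_\mu(\vW)=\frac{1}{|\Omega|n}\sum_{\omega\in\Omega}\sum_{i\notin J}\sum_{r=1,2}|V_{i,r}|\bck{(\SIGMA[\omega|V_{i,r}]-\SIGMA[\omega|V_i])^2}_\mu.
\]
Retaining only the $r=1$ term, the Cauchy--Schwarz bound $\TV{p-q}^2\leq(|\Omega|/4)\sum_\omega(p(\omega)-q(\omega))^2$ together with Jensen's inequality applied to the $\mu$-average gives
\[
\sum_{\omega}\bck{(\SIGMA[\omega|V_{i,1}]-\SIGMA[\omega|V_i])^2}_\mu\;\geq\;\frac{4}{|\Omega|}\bck{\TV{\SIGMA[\nix|V_{i,1}]-\SIGMA[\nix|V_i]}}_\mu^{\!2}\;\geq\;\frac{4\eps^2}{|\Omega|}.
\]
Combining with $|V_{i,1}|=|S_i|\geq \eps|V_i|$ and $\sum_{i\notin J}|V_i|\geq \eps n$ produces a drop of at least $4\eps^4/|\Omega|^2$, which comfortably exceeds the claimed $\eps^4/(4|\Omega|^3)$.

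The main obstacle is merely the bookkeeping of norms: the assumption gives a $\mu$-average of a $\TV$ distance, whereas the index is built from a $\mu$-average of a squared $\ell_2$ quantity on $\Omega$. One therefore has to apply Jensen in the correct direction (moving the square outside the $\mu$-average) and Cauchy--Schwarz in the opposite direction (passing from $\ell_1$ to $\ell_2$ on $\Omega$), both of which introduce factors of $|\Omega|$ that must be tracked to confirm the quantitative bound.
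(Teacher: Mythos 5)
Your proof is correct and follows essentially the same route as the paper's: the identical refinement (split each irregular class $V_i$ into the witness set $S_i$ and its complement), the same Pythagorean variance-decomposition identity for the index, and Jensen's inequality to pass from the expected TV distance to its square. The only deviation is in the $\ell_1$-to-$\ell_2$ conversion on $\Omega$: you use Cauchy--Schwarz over all of $\Omega$, whereas the paper singles out a worst coordinate $\omega_j$ by pigeonhole; your variant even yields a marginally stronger constant ($4\eps^4/|\Omega|^2$ rather than $\eps^4/(4|\Omega|^3)$), which the statement of the lemma of course still absorbs.
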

\begin{proof}
Let $\bar J$ be the set of all indices $j\in[\#\vec V]$ such that
there exists $S\subset V_j$ of size $|S|\geq\eps|V_j|$ such that 
	\begin{equation}\label{eqClaim_indinc1}
	\bck{\TV{\SIGMA[\nix|S]-\SIGMA[\nix|V_j]}}_\mu\geq\eps.
	\end{equation}
Since $\mu$ fails to be $\eps$-regular with respect to $\vV$ we have
	\begin{equation}\label{eqClaim_indinc2}
	\sum_{j\in \bar J}|V_j|\geq\eps n.
	\end{equation}
For each $j\in\bar J$ pick a set $S_j\subset V_j$, $|S_j|\geq\eps|V_j|$ such that (\ref{eqClaim_indinc1}) is satisfied.
Then there exists $\omega_j\in\Omega$ such that
	\begin{equation}\label{eqClaim_indinc99}
	\bck{\abs{\SIGMA[\omega_j|S_j]-\SIGMA[\omega_j|V_j]}}_\mu\geq\eps/(2\abs\Omega).
	\end{equation}
Let $\vW$ be the partition obtained from $\vV$ by splitting each class $V_j$, $j\in\bar J$, into the sub-classes $S_j,V_j\setminus S_j$.
Clearly, $\#\vW\leq2\#\vV$.
Furthermore, 
	\begin{align}\nonumber
	\ind_\mu(\vV)&=\frac1{|\Omega|}\sum_{\omega\in\Omega}\Erw\bck{\Var[\SIGMA[\omega|\vec x]|\cF_{\vV}]}_\mu=
		\frac1{|\Omega|}\sum_{\omega\in\Omega}\bc{\Erw\bck{\Var[\SIGMA[\omega|\vec x]|\cF_{\vW}]}_\mu
			+\Erw\bck{\Var[\Erw[\SIGMA[\omega|\vec x]|\cF_{\vW}]|\cF_{\vV}]}_\mu}\\
			&=\ind_{\mu}(\vW)+\frac1{|\Omega|}\sum_{\omega\in\Omega}\Erw\bck{\Var[\Erw[\SIGMA[\omega|\vec x]|\cF_{\vW}]|\cF_{\vV}]}_\mu.
				\label{eqClaim_indinc666}
	\end{align}
If $j\in\bar J$ then (\ref{eqClaim_indinc99}) implies that on $V_j$ we have
	\begin{align}				\label{eqClaim_indinc667}
	\bck{\Var[\Erw[\SIGMA[\omega_j|\vec x]|\cF_{\vW}]|\cF_{\vV}]}_\mu&\geq
		\frac{|S_j|}{|V_j|}\bck{(\SIGMA[\omega_j|S_j]-\SIGMA[\omega_j|V_j])^2}_\mu\geq\frac{\eps^3}{4|\Omega|^2}.
	\end{align}
Hence, combining (\ref{eqClaim_indinc2}) and (\ref{eqClaim_indinc667}), we find
	\begin{align}				\label{eqClaim_indinc668}
	\frac1{|\Omega|}\sum_{\omega\in\Omega}\Erw\bck{\Var[\Erw[\SIGMA[\omega|\vec x]|\cF_{\vW}]|\cF_{\vV}]}_\mu&\geq\frac{\eps^4}{4|\Omega|^3}.
	\end{align}
Finally, the assertion follows from (\ref{eqClaim_indinc666}) and (\ref{eqClaim_indinc668}).	
\end{proof}

\begin{proof}[Proof of \Thm~\ref{Thm_decomp}]
The set $\cP(\Omega)$ is compact.
Therefore, there exists a partition $\vec Q=(Q_1,\ldots,Q_K)$ of $\cP(\Omega)$ into pairwise disjoint sets
such that for all $i\in[K]$ and any two measures $\mu,\mu'\in Q_i$ we have $\TV{\mu-\mu'}<\eps$.

Given any partition $\vec W$ of $[n]$, we can construct a corresponding decomposition $\vec S(\vec W)$ of $\Omega^n$ as follows.
Call  $\sigma,\sigma'\in\Omega^n$ $\vec W$-equivalent if for every $i\in[\#\vec W]$ there exists $j\in[\#\vec Q]$ such that
	$\sigma[\nix|W_i],\sigma'[\nix|W_i]\in Q_j$.
Then $\vec S(\vec W)$ comprises of the equivalence classes.

We construct the desired partition $\vec V$ of $[n]$ inductively, starting from any given partition $\vec V(0)$ of size at most $1/\eps$.
The construction stops once $\mu$ is $\eps$-homogeneous with respect to $(\vec V(t),\vec S(\vec V(t)))$.
Assuming that this is not the case,  we obtain $\vec V({t+1})$ from $\vec V(t)$ as follows.
If $\mu$ fails to be $\eps$-regular with respect to $\vV(t)$, then we let $\vV({t+1})$ be the partition promised by \Lem~\ref{Lemma_homreg},
which guarantees that 
	\begin{align}\label{eqSimpleStep}
	\#\vV(t+1)\leq2\#\vV(t)
		\quad\mbox{and}\quad
		\ind_\mu(\vV(t+1))\leq\ind_\mu(\vec V(t))-\eps^4/(4|\Omega|^3).
		\end{align}
Otherwise let $\vec S(t)=\vec S(\vec V(t))$ and $s(t)=\#\vec S(t)$ for the sake of brevity.
Further, let $\mu_{i,t}=\mu[\nix|S_i(t)]$ for $i\in[s(t)]$ with $\mu[S_i(t)]>0$.
Moreover, let $\bar I(t)$ be the set of all $i\in[s(t)]$ such that $\mu[S_i(t)]>0$ and $\mu_{i,t}$ fails to be $\eps$-regular with respect to $\vec V(t)$.
If $\mu$ fails to be $\eps$-homogeneous with respect to $(\vec V(t),\vec S(t))$ but $\mu$ is $\eps$-regular w.r.t.\ $\vV(t)$, then
	\begin{equation}\label{eqThm_decomp1}
	\sum_{i\in\bar I(t)}\mu[S_i(t)]\geq\eps.
	\end{equation}
\Lem~\ref{Lemma_homreg} shows that for any $i\in\bar I(t)$ there exists a refinement $\vec W(t,i)$ of $\vec V(t)$ such that 
	\begin{equation}\label{eqThm_decomp2}
	\ind_{\mu_{i,t}}(\vec W(t,i))\leq\ind_{\mu_{i,t}}(\vec V(t))-\eps^4/(4|\Omega|^3).
	\end{equation}
Let $\vec V(t+1)$ be the coarsest common refinement of all the partitions $(\vec W(t,i))_{i\in\bar I(t)}$.
Then 
	\begin{equation}\label{eqThm_decomp3a}
	\#\vec V(t+1)\leq\#\vec V(t)\cdot2^{\#\vec Q^{\#\vec V(t)}}.
	\end{equation}
In addition, (\ref{eqThm_decomp2}) and \Lem~\ref{Lemma_refinement} imply
	\begin{equation}\label{eqThm_decomp3}
	\ind_{\mu_{i,t}}(\vec V(t+1))\leq\ind_{\mu_{i,t}}(\vec V(t))-
		\vecone\{i\in\bar I(t)\}\eps^4/(4|\Omega|^3).
	\end{equation}
Therefore, by (\ref{eqThm_decomp1}), (\ref{eqThm_decomp3}) and Bayes' rule
	\begin{align}
	\ind_\mu(\vec V(t+1))&=\frac1{n|\Omega|}\sum_{\omega\in\Omega}\sum_{j\in[\#\vec V(t+1)]}\sum_{x\in V_j(t+1)}
		\bck{(\SIGMA[\omega|x]-\SIGMA[\omega|V_j(t+1)])^2}_\mu\nonumber\\
		&=\frac1{n|\Omega|}\sum_{\omega,j,x}	\sum_{i\in[s(t)]:\mu[S_i(t)]>0}\mu[S_i(t)]
			\bck{(\SIGMA[\omega|x]-\SIGMA[\omega|V_j(t+1)])^2}_{\mu_{i,t}}\nonumber\\
		&=\sum_{i:\mu[S_i(t)]>0}\mu[S_i(t)]\ind_{\mu_{i,t}}(\vec V(t+1))\nonumber\\
		&\leq-\eps^5/(4|\Omega|^3)+\sum_{i:\mu[S_i(t)]>0}\mu[S_i(t)]\ind_{\mu_{i,t}}(\vec V(t))=\ind_\mu(\vec V(t))-\eps^5/(4|\Omega|^3).
			\label{eqThm_decomp4}
	\end{align}

Combining (\ref{eqSimpleStep}), (\ref{eqThm_decomp4}) and \Lem~\ref{Lemma_refinement}, we conclude that $\mu$ is $\eps$-homogeneous
with respect to $(\vec V(T),\vec S(T))$ for some $T\leq4|\Omega|^3/\eps^5$.
Finally, (\ref{eqThm_decomp3a}) entails that $\#\vec V(T),\#\vec  S(T)$ are bounded in terms of $\eps,\Omega$ only.
\end{proof}

\subsection{Proof of \Cor~\ref{Thm_states}}\label{Sec_states}
To derive \Cor~\ref{Thm_states} from \Thm~\ref{Thm_decomp} we use the following handy sufficient condition for $(\eps,k)$-symmetry.

\begin{lemma}\label{Lemma_regularSymmetric}
For any $k\geq2$, $\eps>0$ there is $\delta=\delta(\eps,k,\Omega)$ such that for large enough $n$ the following is true.
Assume that $\mu\in\cP(\Omega^n)$ is $\delta$-regular with respect to a partition $\vec V$ and 
set $\bar\mu_i(\nix)=\bck{\SIGMA[\nix|V_i]}_\mu$ for $i\in[\#\vV]$.
If
	\begin{align}\label{eqLemma_regularSymmetric0}
	\sum_{i\in[\#\vec V]}\frac{|V_i|}{n}\bck{\TV{\SIGMA[\nix|V_i]-\bar\mu_i}}_\mu<\delta,
	\end{align}
then $\mu$ is $(\eps,k)$-symmetric.
\end{lemma}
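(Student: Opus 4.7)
My plan is to compare the joint $k$-marginal $\mu_{\marg\{\vec x_1,\dots,\vec x_k\}}$ with the product of single-coordinate marginals $\bigotimes_i \mu_{\marg \vec x_i}$ by going through a common reference, the product of block means $\bigotimes_i \bar\mu_{j(\vec x_i)}$, where $j(x)$ denotes the index of the block of $\vec V$ containing $x$.

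First, using $\delta$-regularity I would show that $\mu_{\marg x}$ is close to $\bar\mu_{j(x)}$ for most $x$. Specifically, if for some $\omega \in \Omega$ and regular block $V_j$ the ``anomalous'' set $T = \{x \in V_j : \mu_{\marg x}(\omega) > \bar\mu_j(\omega) + \sqrt\delta\}$ had size $\geq \sqrt\delta\,|V_j|$, then applying $\delta$-regularity with $S=T$ would produce a contradiction since $\bck{\SIGMA[\omega|T]}_\mu = \tfrac{1}{|T|}\sum_{x\in T}\mu_{\marg x}(\omega) > \bar\mu_j(\omega) + \sqrt\delta$ while $\bck{\SIGMA[\omega|V_j]}_\mu = \bar\mu_j(\omega)$. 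Summing over $\omega, j$ and absorbing the $\leq \delta n$ coordinates in exceptional blocks yields $\tfrac{1}{n}\sum_x \TV{\mu_{\marg x}-\bar\mu_{j(x)}} \leq O_\Omega(\sqrt\delta)$, and hence by subadditivity of TV for product distributions, $\Erw_{\vec x_1,\dots,\vec x_k}\TV{\bigotimes_i \mu_{\marg \vec x_i}-\bigotimes_i \bar\mu_{j(\vec x_i)}} \leq O_{k,\Omega}(\sqrt\delta)$.

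Next, to bound $\Erw_{\vec x}\TV{\mu_{\marg\{\vec x\}}-\bigotimes_i \bar\mu_{j(\vec x_i)}}$ I apply Cauchy--Schwarz to pass to the $L^2$ distance, which after expansion using two independent copies $\SIGMA, \TAU \sim \mu$ and averaging over $\vec x_1,\dots,\vec x_k$ iid uniform in $[n]$ rewrites (up to an $O(k^2/n)$ collision correction) as $\bck{\rho^k}_{\SIGMA\TAU} - 2\bck{\beta^k}_\SIGMA + \alpha^k$, where $\rho(\SIGMA,\TAU) = \tfrac{1}{n}|\{x : \SIGMA(x)=\TAU(x)\}|$, $\alpha = \tfrac{1}{n}\sum_x\sum_\omega \bar\mu_{j(x)}(\omega)^2$, and $\beta(\SIGMA) = \tfrac{1}{n}\sum_x \bar\mu_{j(x)}(\SIGMA(x))$. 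I would then show both $\rho$ and $\beta$ concentrate near $\alpha$. For $\beta$, rewriting as $\sum_j \tfrac{|V_j|}{n}\sum_\omega \bar\mu_j(\omega)\SIGMA[\omega|V_j]$, the elementary bound $|\beta(\SIGMA)-\alpha| \leq 2\sum_j\tfrac{|V_j|}{n}\TV{\SIGMA[\nix|V_j]-\bar\mu_j}$ together with hypothesis (\ref{eqLemma_regularSymmetric0}) gives $\bck{|\beta-\alpha|}_\SIGMA < 2\delta$, and the inequality $|a^k-b^k|\leq k|a-b|$ for $a,b\in[0,1]$ upgrades this to $|\bck{\beta^k}_\SIGMA-\alpha^k| \leq 2k\delta$. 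For $\rho$, Step~1 gives $\bck\rho \approx \alpha$, and the concentration of $\rho$ around $\alpha$ is obtained by applying $\delta$-regularity to the $\TAU$-dependent subsets $V_j \cap \TAU^{-1}(\omega)$ (whose sizes $|V_j|\TAU[\omega|V_j]$ are large by hypothesis) to conclude $\bck{\rho_j(\SIGMA,\TAU)}_\SIGMA \approx \sum_\omega\TAU[\omega|V_j]\bar\mu_j(\omega)$ for fixed $\TAU$, and then integrating via the hypothesis to obtain the required higher-moment bound on $\rho$.

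The main obstacle is this final concentration of $\rho$: converting the blockwise concentration of $\SIGMA[\nix|V_j]$ supplied by (\ref{eqLemma_regularSymmetric0}) into concentration of the global overlap $\rho(\SIGMA,\TAU)$ requires carefully combining the hypothesis with $\delta$-regularity applied to the $\TAU$-dependent subsets $V_j \cap \TAU^{-1}(\omega)$, which is where both assumptions of the lemma genuinely interact. Tracking the square-root loss from Cauchy--Schwarz in the previous step also dictates how small $\delta$ must be chosen as a function of $\eps$, $k$, and $|\Omega|$.
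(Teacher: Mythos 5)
Your approach is correct, but it is genuinely different from the paper's. The paper proves the lemma by a sequential conditioning argument: it first establishes a ``pinning'' inequality (\ref{eqMarkovAnwenden}) saying that conditioning on any event of $\mu$-mass at least $\xi^{1/4}$ barely moves most single-coordinate marginals, and then runs an induction on $h\in[k]$, fixing the values at $\vec x_1,\ldots,\vec x_h$ one at a time and comparing $\bck{\SIGMA[\omega_{h+1}|\vec x_{h+1}]\mid\Sigma_h}_\mu$ with $\bck{\SIGMA[\omega_{h+1}|\vec x_{h+1}]}_\mu$. You instead reduce to a second-moment/overlap computation: after routing through the block means $\bar\mu_{j(\vec x_i)}$ and applying Cauchy--Schwarz, the averaged squared $L^2$-distance expands exactly as $\bck{\rho^k}_{\mu\tensor\mu}-2\bck{\beta^k}_\mu+\alpha^k$, and it remains to show the two-replica overlap $\rho$ and the scalar $\beta$ both concentrate near the constant $\alpha$ in $L^1$; the $k$-dependence is then handled painlessly by $|a^k-b^k|\leq k|a-b|$ on $[0,1]$. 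This has the attractive feature that a single estimate on a two-replica quantity yields all $k\geq2$ at once, while the paper's proof is inductive in $k$. Both routes work. Two small imprecisions in your write-up, neither of which damages the argument: no $O(k^2/n)$ collision correction is actually needed, since $\Erw_{\vec x}\prod_i f(\vec x_i)=\prod_i\Erw_{\vec x_i}f(\vec x_i)$ for i.i.d.\ $\vec x_i$ regardless of coincidences; and the sets $V_j\cap\TAU^{-1}(\omega)$ are not automatically of size at least $\delta|V_j|$ --- when they are smaller, one has $\TAU[\omega|V_j]<\delta$ and the corresponding term is discarded trivially, so the case split is benign but must be made explicit.
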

\begin{proof}
Choose a small $\xi=\xi(\eps,k,\Omega)>0$ and a smaller $\delta=\delta(\xi)>0$.
Then (\ref{eqLemma_regularSymmetric0}) implies that there is $J\subset[\#\vec V]$ satisfying
	\begin{equation}\label{eqLemma_regularSymmetric1}
	\sum_{j\in J}|V_j|\geq(1-\xi)n
	\end{equation}
such that for all $j\in J$, $S\subset V_j$, $|S|\geq\xi|V_j|$ we have
	\begin{equation}\label{eqLemma_regularSymmetric2}
	\bck{\TV{\SIGMA[\nix|S]-\bar\mu_j}}_\mu\leq\xi.
	\end{equation}
In particular, we claim that (\ref{eqLemma_regularSymmetric2}) implies the following (if $\xi$ is small enough):
	\begin{equation}\label{eqMarkovAnwenden}
	\forall \omega\in\Omega, j\in J,\Sigma\subset\Omega^n:\mu(\Sigma)\geq\xi^{1/4}\Rightarrow
			\abs{\cbc{x\in V_j:\abs{\bck{\SIGMA[\omega|x]|\Sigma}_\mu-\bar\mu_j(\omega)}>\xi^{1/4}}}\leq\xi^{1/4}|V_j|.
	\end{equation}
Indeed, assume that $\bck{\vecone\{\SIGMA\in\Sigma\}}_\mu\geq\xi^{1/4}$ and 
$\abs{\cbc{x\in V_j:\abs{\bck{\SIGMA[\omega_0|x]|\Sigma}_\mu-\bar\mu_j(\omega_0)}>\xi^{1/4}}}>\xi^{1/4}|V_j|$ for some $\omega_0\in\Omega$.
Then because $\bck{\SIGMA[\nix|x]|\Sigma}_\mu$ is a probability measure on $\Omega$ for every $x$, there exists $\omega\in\Omega$ such that
the set $S=\cbc{x\in V_j:\bck{\SIGMA[\omega|x]|\Sigma}_\mu<\bar\mu_j(\omega)-\xi^{1/4}/|\Omega|}$ has size $|S|>\xi^{1/4}|V_j|/(2|\Omega|)$.
In particular,
	$\bck{\SIGMA[\omega|S]|\Sigma}_\mu\leq \bar\mu_j(\omega)-\xi^{1/4}/|\Omega|.$
Therefore, by Markov's inequality
	$$\bck{\vecone\{\SIGMA[\omega|S]\geq \bar\mu_j(\omega)-\xi^{1/3}\}|\Sigma}_\mu\leq\frac{\bar\mu_j(\omega)-\xi^{1/4}/|\Omega|}{\bar\mu_j(\omega)-\xi^{1/3}}
		\leq\frac{1-\xi^{1/4}/|\Omega|}{1-\xi^{1/3}}\leq1-\xi^{1/4}/(2|\Omega|).$$
Consequently, we obtain
	 $$\bck{\TV{\SIGMA[\nix|S]-\bar\mu_j}}_\mu\geq\xi^{1/3+1/4}\bck{\vecone\{\SIGMA\in\Sigma\}}_\mu/(2|\Omega|)\geq\xi^{7/8}.$$
Since $|S|>\xi^{1/4}|V_j|/(2|\Omega|)>\xi|V_j|$, this is a contradiction to (\ref{eqLemma_regularSymmetric2}).

Now, 
fix any $\omega_1,\ldots,\omega_k\in\Omega$ and
let $\vec x_1,\ldots,\vec x_k\in[n]$ be chosen independently and uniformly at random.
Let $\Sigma_h=\Sigma_h(\vec x_1,\ldots,\vec x_h)\subset\Omega^n$ be the event that $\SIGMA(\vec x_i)=\omega_i$ for all $i\leq h$. 
We are going to show that for $0\leq h<k$,
	\begin{equation}\label{eqLemma_regularSymmetric3}
	\Erw\brk{\mu(\Sigma_h)\abs{\bck{\SIGMA[\omega_{h+1}|\vec x_{h+1}]|\Sigma_h}_\mu
		-\bck{\SIGMA[\omega_{h+1}|\vec x_{h+1}]}_\mu}}<\xi^{1/5}.
	\end{equation}
In the case $h=0$ there is nothing to show.
As for the inductive step,  condition on $\vec x_1,\ldots,\vec x_{h}$.
\begin{description}
\item[Case 1: $\mu(\Sigma_h)\leq\xi^{1/4}$]
	regardless of the choice of $\vec x_{h+1}$ we have
		\begin{align*}
		\mu(\Sigma_h)\abs{\bck{\SIGMA[\omega_{h+1}|\vec x_{h+1}]|\Sigma_h}_\mu
		-\bck{\SIGMA[\omega_{h+1}|\vec x_{h+1}}_\mu}\leq\xi^{1/4}.
		\end{align*}
\item[Case 2: $\mu(\Sigma_h)>\xi^{1/4}$]
	due to (\ref{eqLemma_regularSymmetric1}) with probability at least $1-2\xi$ we have
		$\vec x_{h+1}\in V_j\setminus\{\vec x_1,\ldots,\vec x_h\}$ for some $j\in J$.
	Hence, (\ref{eqMarkovAnwenden}) implies
		$\Erw_{\vec x_{h+1}} \left[ \abs{\bck{\SIGMA[\omega_{h+1}|\vec x_{h+1}]|\Sigma_h}_\mu-\bck{\SIGMA[\omega_{h+1}|\vec x_{h+1}]}_\mu}\right] 
		\leq\xi^{1/4}.$
\end{description}
Hence, (\ref{eqLemma_regularSymmetric3}) follows.

To complete the proof, we are going to show by induction on $h\in[k]$ that
	\begin{align}\label{eqNasty1}
	\Erw\abs{\bck{\prod_{i=1}^h\SIGMA[\omega_i|\vec x_i]}_\mu-\prod_{i=1}^h\bck{\SIGMA[\omega_i| \vec x_i]}_\mu}&\leq h \xi^{1/5}.
	\end{align}
For $h=1$ there is nothing to show.
To proceed from $h$ to $h+1$ we use the triangle inequality to write
	\begin{align*}
	\Erw\brk{
	\abs{\bck{\prod_{i=1}^{h+1}\SIGMA[\omega_i|\vec x_i]}_\mu-\prod_{i=1}^{h+1}\bck{\SIGMA[\omega_i|\vec x_i]}_\mu}}
	\leq & \Erw\brk{\mu(\Sigma_h)\abs{\bck{\SIGMA[\omega_{h+1}|\vec x_{h+1}]|\Sigma_h}_\mu
		-\bck{\SIGMA[\omega_{h+1}|\vec x_{h+1}]}_\mu}}
	\\ &+ \Erw\brk{
	\bck{\SIGMA[\omega_{h+1}|\vec x_{h+1}]}_\mu \abs{ \bck{\prod_{i=1}^{h}\SIGMA[\omega_i|\vec x_i]}_\mu-\prod_{i=1}^{h}\bck{\SIGMA[\omega_i|\vec x_i]}_\mu}}.
	\end{align*}
Invoking the induction hypothesis and (\ref{eqLemma_regularSymmetric3}) completes the proof.
\end{proof}

\begin{proof}[Proof of \Cor~\ref{Thm_states}]
For a small enough $\delta=\delta(\eps,k)>0$ let $(\vV,\vS)$
be a pair of partitions of size at most $N=N(\delta,\Omega)$ such that $\mu$ is $\delta/2$-homogeneous with respect to $(\vV,\vS)$ as guaranteed by \Thm~\ref{Thm_decomp}.
Let $\eta=\eps/(2N)$ and let $J$ be the set of all $j\in[\#\vS]$ such that $\mu(S_j)\geq\eta$ and such that $\mu[\nix|S_j]$ is $\delta$-regular with respect to $\vV$.
Then
	\begin{align*}
	\sum_{j\in[\#\vS]\setminus J}\mu(S_j)&\leq\delta+\eps/2<\eps.
	\end{align*}
Furthermore, for every $j\in J$ the measure $\mu[\nix|S_j]$ satisfies (\ref{eqLemma_regularSymmetric0}) due to {\bf HM2}.
Therefore, \Lem~\ref{Lemma_regularSymmetric} implies that $\mu[\nix|S_j]$ is $(\eps,k)$-symmetric.
Consequently, the sets $(S_j)_{j\in J}$ are pairwise disjoint $(\eps,k)$-states  with
$\mu(S_j)\geq\eta$ for all $j\in J$ and
 $\sum_{j\in J}\mu(S_j)\geq1-\eps$.
\end{proof}

\subsection{Proof of \Cor~\ref{Cor_states}}\label{Sec_Cor_states}
Pick small enough $\delta=\delta(\eps,k,\Omega),\gamma=\gamma(\delta),\eta(\gamma)>0$.
Then by \Thm~\ref{Thm_decomp} $\mu$ is $\gamma$-homogeneous with respect to $(\vV,\vS)$ for partitions that satisfy $\#\vV+\#\vS\leq N=N(\gamma)$.
Let $J\subset[\#\vS]$ contain all $j$ such that $\mu[\nix|S_j]$ is $\gamma$-regular with respect to $\vV$ and such that $\mu(S_j)\geq\eta$.
Let $\bar\mu_{i,j}=\bck{\SIGMA[\nix|V_i]}_{\mu[\nix|S_j]}$. 
Then by {\bf HM2} for every $j\in J$ we have
	\begin{align*}
	\frac1{n}\sum_{i\in[\#\vec V]}|V_i|\bck{\TV{\SIGMA[\nix|V_i]-\bar\mu_{i,j}}}_{\mu[\nix|S_j]}<3\gamma.
	\end{align*}
Therefore, \Lem~\ref{Lemma_regularSymmetric} implies that $S_j$ is a $(\delta,2)$-state.
Consequently, our assumption (\ref{eqCor_statesAssumption}) and the triangle inequality entail that for all $j,j'\in J$,
	\begin{align}\label{eqInTheMiddle}
	\sum_{i\in[\#\vV]}\frac{|V_i|}n\TV{\bck{\SIGMA[\nix|V_i]}_{\mu[\nix|S_j]}-\bck{\SIGMA[\nix|V_i]}_{\mu[\nix|S_{j'}]}}&<\delta.
	\end{align}
Choosing $\eta$ small, we can ensure that $\sum_{j\not\in J}\mu(S_j)\leq\delta$.
Therefore, letting $\bar\mu_i=\bck{\SIGMA[\nix|V_i]}_\mu$, we obtain from (\ref{eqInTheMiddle})
	\begin{align}
	\sum_{i\in[\#\vV]}\frac{|V_i|}n\bck{\TV{\SIGMA[\nix|V_i]-\bar\mu_i}}_{\mu}&
		\leq\delta+
	\sum_{i\in[\#\vV]}\frac{|V_i|}n\sum_{j\in J}\mu(S_j)\bck{\TV{\SIGMA[\nix|V_i]-\bar\mu_i}}_{\mu[\nix|S_j]}\nonumber\\
		&\leq2\delta+\sum_{i\in[\#\vV]}\frac{|V_i|}n\sum_{j\in J}\mu(S_j)\TV{\bck{\SIGMA[\nix|V_i]}_{\mu[\nix|S_j]}-\bar\mu_i}
			\qquad\mbox{[by {\bf HM2}]}\nonumber\\
		&\leq5\delta.\label{eqInTheEnd}
	\end{align}
Since $\mu$ is $\gamma$-regular and thus $5\delta$-regular w.r.t.\ $\vV$  by {\bf HM4}, (\ref{eqInTheEnd}) and
\Lem~\ref{Lemma_regularSymmetric} imply that $\mu$ is $(\eps,k)$-symmetric.

\subsection{Proof of \Cor~\ref{Cor_states2}}\label{Sec_Cor_states2}
Choose a small $\gamma = \gamma(\eps,\Omega)$ and a smaller $\delta = \delta(\gamma,\eta)$.
Assume that $S$ is a $(\gamma,2)$-state with $\mu(S) \geq \eta$ and that $\mu$ is $(\delta,2)$ symmetric. 
Assume for contradiction that
	\begin{equation}\label{eqCor_states2_proof1}
	\frac1n\sum_{x\in[n]}\TV{\mu_{\downarrow x}[\nix|S]-\mu_{\downarrow x}}>\eps.
	\end{equation}
Let
	\begin{align*}
	W&= \left \{x\in V: \TV{\mu_{\downarrow x}[\nix|S]-\mu_{\downarrow x}[\nix]]} \geq \eps/2 \right\}&&\mbox{and}\\
	W_{s}(\omega)&= \left \{x \in W_i: s\cdot\left( \mu_{\marg x}[\omega|S] - \mu_{\marg x}[\omega] \right) \geq {\eps}/(2|\Omega|) \right \}
		&&\mbox{for $\omega\in\Omega$, $s\in\{\pm1\}$.}
	\end{align*}
Then (\ref{eqCor_states2_proof1}) entails that  $|W| \geq \eps n/2$.
Therefore, there is $\omega \in \Omega$ such that $|W_{s}(\omega)| \geq \eps n/(2|\Omega|)$ for either $s=+1$ or $s=-1$.
Let $W' = W_{s}(\omega)$ for the sake of brevity.
Of course, by the definition of $W'$,
	\beq \label{eq_proof_cor_states2_v_0}
	\bc{\bck{\SIGMA[\omega|W']}_{\mu[\nix|S]}-\bck{\SIGMA[\omega|W']}_{\mu}}^2\geq\frac{\eps^2}{4 | \Omega|^2 }
	\eeq
Moreover, because $S$ is an $(\gamma,2)$-state, the measure $\mu[\nix|S]$ is $(\gamma,2)$-symmetric.
Therefore,
\begin{align}\nonumber
	\bck{ \bc{\SIGMA[\omega|W']-\bck{\TAU[\omega|W']}_{\mu[\nix|S]}}^2}_{\mu[\nix|S]} &=
	\frac{1}{|W'|^2} \sum_{x,y\in W'}\brk{\bck{\SIGMA[\omega|x]\SIGMA[\omega|y]}_{\mu[\nix|S]}-
		\bck{\TAU[\omega|x]}_{\mu[\nix|S]}\bck{\TAU[\omega|y]}_{\mu[\nix|S]}}\\
	&\leq \frac{4 \gamma|\Omega|^2}{\eps^2}\qquad[\mbox{as $|W'|\geq\eps n/(2|\Omega|)$]}.
	\label{eq_proof_cor_states2_v_2} 
	\end{align}
Similarly, since $\mu$ is $(\delta,2)$-symmetric,
	\begin{align}
	\bck{\bc{\SIGMA[\omega|W']-\bck{\TAU[\omega|W']}_\mu}^2}_{\mu} &= 
		\frac{1}{|W'|^2} \sum_{x,y\in W'}
			\brk{\bck{\SIGMA[\omega|x]\SIGMA[\omega|y]}_{\mu}-
		\bck{\TAU[\omega|x]}_{\mu}\bck{\TAU[\omega|y]}_{\mu}}
		 \leq \frac{4  \delta|\Omega|^2}{\eps^2}.	
		\label{eq_proof_cor_states2_v_1}
	\end{align}
On the other hand we have
	\begin{align} \nonumber 
	\bck{\bc{\SIGMA[\omega|W']-\bck{\TAU[\omega|W']}_\mu}^2}_{\mu} 
		&\geq \mu(S)  \bck{\left. \bc{\SIGMA[\omega|W']-\bck{\TAU[\omega|W']}_\mu}^2 \right.}_{\mu[\nix|S]} \\ 
		&\hspace{-3cm} \geq  \mu(S) \bc{ \frac{1}{2} \bc{\bck{\TAU[\omega|W']}_{\mu[\nix|S]}-\bck{\TAU[\omega|W']}_{\mu}}^2
			-\bck{\left. \bc{\SIGMA[\omega|W']-\bck{\TAU[\omega|W']}_{\mu[\nix|S]}}^2 \right. }_{\mu[\nix|S]} }.
			\label{eq_proof_cor_states2_v_3}
	\end{align}
Finally, plugging (\ref{eq_proof_cor_states2_v_0}), (\ref{eq_proof_cor_states2_v_1}) and (\ref{eq_proof_cor_states2_v_2}) into
 (\ref{eq_proof_cor_states2_v_3}), 
 we find
 	\begin{align*}
	\frac{4\delta|\Omega|^2}{\eps^2}&\geq\eta\brk{\frac{\eps^2}{8|\Omega|^2}-\frac{4\gamma|\Omega|^2}{\eps^2}},
	\end{align*}
which is a contradiction if $\delta$ is chosen small enough.

\subsection{Proof of \Prop~\ref{Prop_tensorise}}\label{Sec_Prop_tensorise}
Choose small enough $\alpha=\alpha(\eps,\Omega)$, $\gamma=\gamma(\alpha)>0$, $\chi = \chi(\gamma)>0$ and an even smaller $\delta=\delta(\gamma,\chi)>0$ and assume that $\mu$ is
$(\delta,2)$-symmetric.
Suppose that $\mu$ is $\chi$-homogeneous with respect to a partition $(\vV,\vS)$ such that $\#\vV+\#\vS\leq N=N(\gamma)$
as promised by \Thm~\ref{Thm_decomp}.
Let $J$ be the set of all $j\in[\#\vS]$ such that $\mu(S_j)\geq\gamma^2/N$.
Moreover,  let $I$ be the set of all $i\in[\#\vV]$ such that $\mu$ is $\chi$-regular on $V_i$ and $|V_i|\geq\gamma n/N$.
By \Cor~\ref{Cor_states2} we have
	\begin{align*}
	\frac1{|V_i|}\sum_{x\in V_i}\TV{\mu_{\marg x}[\nix|S_j]-\mu_{\marg x}[\nix]}&<\gamma&\mbox{for all }i\in I,j\in J,
	\end{align*}
provided that $\delta$ is chosen small enough.
Therefore, letting $\bar\mu_i=\bck{\SIGMA[\nix|V_i]}_\mu$, for all $i\in I$ we have
	\begin{align}\label{eqProp_tensorise1}
	\bck{\TV{\SIGMA[\nix|V_i]-\bar\mu_i}}_\mu&<2\gamma.
	\end{align}

Fix some $i\in I$.
We claim that $\mu\tensor\mu$ is $\alpha$-regular on $V_i$.
Hence, let $U\subset V_i$ be a set of size $|U|\geq\alpha|V_i|$ and let
	\begin{align*}
	\cE=\cbc{\TV{\SIGMA[\nix|U]-\bar\mu_i}\leq\gamma^{1/3}}.
	\end{align*}
Then (\ref{eqProp_tensorise1}) implies that $\bck{\vecone\{\SIGMA\not\in\cE\}}_\mu<\gamma^{1/3}$,
	because $\mu$ is $\gamma$-regular on $V_i$.
Now, fix some $\sigma\in\cE$.
For $\omega\in\Omega$ let $U(\sigma,\omega)=\{x\in U:\sigma(x)=\omega\}$.
Let
	\begin{align*}
	\cE'(\sigma,\omega)=\cbc{\TV{\TAU[\nix|U(\sigma,\omega)]-\bar\mu_i}\leq\gamma^{1/3}}.
	\end{align*}
If $|U(\sigma,\omega)|\geq\gamma^{1/2}|U|$, then due to (\ref{eqProp_tensorise1}) and $\gamma$-regularity we obtain, by a similar token as previously,
$\bck{\vecone\{\TAU\notin\cE'(\sigma,\omega)\}}_\mu\leq \gamma^{1/3}$.
Consequently, the event $\cE'(\sigma)$ that $\cE'(\sigma,\omega)$ occurs 
for all $\omega$ satisfying $|U(\sigma,\omega)|\geq\gamma^{1/2}|U|$ has probability at least $1-|\Omega|\gamma^{1/3}$.
Therefore, for any $\omega,\omega'\in\Omega$ we obtain
	\begin{align*}
	&\bck{\abs{\frac1{|U|}\sum_{x\in U}\vecone\{\SIGMA(x)=\omega\}\vecone\{\TAU(x)=\omega'\}-\mu_i(\omega)\mu_i(\omega')}}_\mu\\
		&\qquad
		\leq(|\Omega|+1)\gamma^{1/3}+
			\bck{\abs{\frac1{|U|}\sum_{x\in U}\vecone\{\SIGMA(x)=\omega\}\vecone\{\TAU(x)=\omega'\}-\mu_i(\omega)\mu_i(\omega')}|
					\SIGMA\in\cE,\TAU\in\cE'(\SIGMA)}_\mu\\
		&\qquad
			\leq\gamma^{1/7}+
			\bck{\max_{\omega:|U(\SIGMA,\omega)|\geq\gamma^{1/2}|U|}|\TAU[\omega'|U(\SIGMA,\omega)]-\mu_i(\omega')|
					\,\big|\,\SIGMA\in\cE,\TAU\in\cE'(\SIGMA)}_\mu
		\leq\gamma^{1/8}.
	\end{align*}
Summing over all $\omega,\omega'$ and choosing $\gamma$ small enough, we conclude that $\mu\tensor\mu$ is $\alpha$-regular on $V_i$.

Finally, (\ref{eqProp_tensorise1}) implies that $\mu\tensor\mu$ satisfies
	\begin{align*}
	\bck{\TV{(\SIGMA \vec \tensor \TAU) [\nix|V_i]-\bar\mu_i\tensor\bar\mu_i}}_{\mu\tensor\mu}&<\alpha.
	\end{align*}
Therefore, 
picking $\alpha$ small enough, we can apply
\Lem~\ref{Lemma_regularSymmetric} to conclude that $\mu\tensor\mu$ is $(\eps,2)$-symmetric.

\section{Factor graphs}\label{Sec_factorGraphs}

\subsection{Examples}\label{Sec_FactorGraphExamples}
The aim in this section is to set up a comprehensive framework for the study of ``random factor graphs'' and their corresponding Gibbs measures.
To get started let us ponder a few concrete examples.

In the \emph{Ising model} on a graph $G=(V,E)$ the variables of the problem are just the vertices of the graph.
The values available for each variable are $\pm1$.
Thus, an assignment is simply a map $\sigma:V\to\{\pm1\}$.
Moreover,  each edge of $G$ gives rise to a constraint.
Specifically,  given a parameter $\beta>0$ we define a weight function $\psi_e$ corresponding to the edge $e=\{v,w\}$ by letting
	$\psi_e(\sigma)=\exp(\beta\sigma(v)\sigma(w)).$
Thus, edges $e=\{v,w\}$ give larger weight to assignments $\sigma$ such that $\sigma(v)=\sigma(w)$
than in the case $\sigma(v)\neq\sigma(w)$.
The corresponding partition function reads
	\begin{align*}
	Z_\beta(G)&=\sum_{\sigma:V\to\{\pm1\}}\prod_{e\in E}\psi_e(\sigma)=\sum_{\sigma:V\to\{\pm1\}}\exp\brk{\beta\sum_{\{v,w\}\in E}\sigma(v)\sigma(w)}.
	\end{align*}
Further, the  Gibbs distribution $\mu_{G,\beta}$ induced by $G$, $\beta$ is the probability measure on $\{\pm1\}^V$ defined by
	\begin{align*}
	\mu_{G,\beta}(\sigma)&=\frac1{Z_\beta(G)}\prod_{e\in E}\psi_e(\sigma)
		=\frac1{Z_\beta(G)}\exp\brk{\beta\sum_{\{v,w\}\in E}\sigma(v)\sigma(w)}.
	\end{align*}
Thus, $\mu_{G,\beta}$ weighs assignments according to the number of edges $e=\{v,w\}$ such that $\sigma(v)=\sigma(w)$.

The Ising model has been studied extensively in the mathematical physics literature on various classes of graphs, including and particularly random graphs.
For instance, if $\G(n,d)$ is a random regular graph of degree $d$ on $n$ vertices, then 
$Z_\beta(\G(n,d))$ is known to ``converge'' to the value predicted by the cavity method~\cite{DM}.
Formally, the cavity method yields a certain number $F(\beta,d)$ such that 
	\begin{equation}\label{eqIsingConv}
	\lim_{n\to\infty}\frac1n\Erw[\ln Z_\beta(\G(n,d))]=F(\beta,d).
	\end{equation}
Because $Z_\beta(\G(n,d))$ is exponential in $n$ with high probability, the scaling applied in (\ref{eqIsingConv}) is the appropriate one to obtain a finite limit.
Furthermore, by Azuma's inequality $\ln Z_\beta(\G(n,d))$ is concentrated about its expectation.
Therefore, (\ref{eqIsingConv})  implies that $\frac1n\ln Z_\beta(\G(n,d))$ converges to $F(\beta,d)$ in probability.

The \emph{Potts antiferromagnet} on a graph $G=(V,E)$ can be viewed as a twist on the Ising model.
In this case we look at assignments $\sigma:V\to[k]$ for some number $k\geq3$.
The weight functions associated with the edges are defined by
	$\psi_e(\sigma)=\exp(-\beta\vecone\{\sigma(v)=\sigma(w)\})$ for some $\beta>0$.
Thus, this time the edges prefer that the incident vertices receive {\em different} values.
The Gibbs measure and the partition function read
	\begin{align*}
	\mu_{G,\beta}(\sigma)&=\frac1{Z_\beta(G)}\exp\brk{-\beta\sum_{\{v,w\}\in E}\vecone\{\sigma(v)=\sigma(w)\}},&
		Z_\beta(G)&=\sum_{\sigma:V\to[k]}\exp\brk{-\beta\sum_{\{v,w\}\in E}\vecone\{\sigma(v)=\sigma(w)\}}.
	\end{align*}
While it is known that
 	$\lim_{n\to\infty}\frac1n\Erw[\ln Z_\beta(\G(n,d))]$
exists and that $\ln Z_\beta(\G(n,d))$ is concentrated about its expectation~\cite{bayati}, the precise value remains elusive for a wide range of $d,\beta$
	(in contrast ferromagnetic version of the model~\cite{DMSS}).
However, it is not difficult to see that for sufficiently large values of $d,\beta$ we have~\cite{cond}
	$$\lim_{n\to\infty}\frac1n\Erw[\ln Z_\beta(\G(n,d))]<\lim_{n\to\infty}\frac1n\ln \Erw[Z_\beta(\G(n,d))].$$
Hence, just like in the random $k$-SAT model the first moment overshoots the actual value of the partition function by an exponential factor.
The Potts model is closely related to the $k$-colorability problem.
Indeed, if we think of the $k$ possible values as colors, then for large $\beta$
the Gibbs measure concentrates on colorings with few monochromatic edges.

As a third example let us consider the following version of the random $k$-SAT model.
Let $k\geq3$, $\Delta>1$ be fixed integers, let $V_n=\{x_1,\ldots,x_n\}$ be a set of Boolean variables and let $d_n:V_n\times\{\pm1\}\to[\Delta]$ be a map
such that $$m=\sum_{x\in V_n}(d_n(x,1)+d_n(x,-1))/k$$ is an integer.
Then we let $\PHI(n,k,d_n)$ be a random $k$-CNF formula with $m$ clauses in which each variable $x\in V_n$ appears precisely $d_n(x,1)$ times
as a positive literal and precisely $d_n(x,-1)$ times as a negative literal.
As in \Sec~\ref{Sec_intro}, for a clause $a$ and a truth assignment $\sigma:V\to\{0,1\}$ we let
	$\psi_a(\sigma)=\exp(-\beta\vecone\{\sigma\mbox{ violates }a\}).$
Then for a given parameter $\beta>0$ we obtain a Gibbs measure that weighs assignments by the number of clauses that they violate
and a corresponding partition function $Z_{\beta}(\PHI(n,k,d_n))$, cf.\ (\ref{eqkSAT1})--(\ref{eqkSAT2}).
Hence, for given $\beta>0$, $k\geq3$ and degree assignments $(d_n)_n$ the problem 
of determining	$\lim_{n\to\infty}\frac1n\Erw[\ln Z_\beta(\PHI(n,k,d_n))]$ arises.
This question is anything but straightforward even in the special case that $d_n(x,\pm1)=d_0$ is the same for all $x$.
In~\cite{clusters} we show how the results of the present paper can be put to work to tackle this case.

\subsection{Random factor graphs}
The following definition  encompasses a variety of concrete models.

\begin{definition}\label{Def_model}
Let $\Delta>0$ be an integer, let $\Omega,\Theta$ be finite sets and let $\Psi=\cbc{\psi_1,\ldots,\psi_l}$ be a finite set of functions
	$\psi_i:\Omega^{h_i}\to(0,\infty)$ of arity $h_i\in[\Delta]$.
A \bemph{$(\Delta,\Omega,\Psi,\Theta)$-model} $\cM=(V,F,d,t,(\psi_a)_{a\in F})$ consists of
	\begin{description}
	\item[M1] a countable set $V$ of \bemph{variable nodes},
	\item[M2] a countable set $F$ of \bemph{constraint nodes},
	\item[M3] a map $d:V\cup F\to\brk{\Delta}$ such that 
			\begin{equation}\label{eqDegSum}
			\sum_{x\in V}d(x)=\sum_{a\in F}d(a),
			\end{equation}
	\item[M4] a map $t:C_V\cup C_F\to\Theta$, where we let
			\begin{align*}
			C_V&=\bigcup_{x\in V}\cbc{x}\times[d(x)],&
			C_F&=\bigcup_{a\in F}\cbc{a}\times[d(a)],
			\end{align*}
		such that 
			\begin{equation}\label{eqTypeSum}
			\abs{t^{-1}(\theta)\cap C_V}=\abs{t^{-1}(\theta)\cap C_F}\quad\mbox{ for each $\theta\in\Theta$,}
			\end{equation}
	\item[M5] a map $F\to\Psi$, $a\mapsto\psi_a$ such that $\psi_a:\Omega^{d(a)}\to(0,\infty)$ for all $a\in F$.
	\end{description}
The \bemph{size} of the model is $\#\cM=|V|$.
Furthermore, a \bemph{$\cM$-factor graph} is a bijection $G:C_V\to C_F$, $(x,i)\mapsto G(x,i)$ such that $t(G(x,i))=t(x,i)$ for all $(x,i)\in C_V$.
\end{definition}

\noindent
Of course, (\ref{eqDegSum}) and (\ref{eqTypeSum})
require that either both quantities are infinite or both are finite.

The semantics is that $\Delta$ is the maximum degree of a factor graph.
Moreover, $\Omega$ is the set of possible values that the variables of the model range over, e.g., the set $\{\pm1\}$ in the Ising model.
Further, $\Theta$ is a set of ``types''.
For instance, in the random $k$-SAT model the types can be used to specify the signs of the literals.
Additionally, $\Psi$ is a set of possible weight functions.

A model $\cM$ comes with a set $V$ of variable nodes and a set $F$ of contraint nodes.
The degrees of these nodes are prescribed by the map $d$.
Just like in the ``configuration model'' of graphs with a given degree sequence
we create $d(v)$ ``clones'' of each node $v$.
The sets $C_V$, $C_F$ contain the clones of the variable and constraint nodes, respectively.
Further, the map $t$ assigns a type to each ``clone'' of either a constraint or variable node and
 each constraint node $a$ comes with a weight function $\psi_a$.

A $\cM$-factor graph is a type-preserving matching $G$ of the variable and constraint clones.
Let $\cG(\cM)$ be the set of all $\cM$-factor graphs
and write $\G=\G(\cM)$ for a uniformly random sample from $\cG(\cM)$.
Contracting the clones of each node,
we obtain a bipartite (multi-)graph with variable nodes $V$ and  constraint nodes $F$.
We often identify $\G$ with this multi-graph.
For instance, if we speak of the distance of two vertices in $\G$ we mean the length of a shortest path in this multi-graph.

For a clone $(x,i)\in C_V$ we denote by $\partial(G,x,i)=G(x,i)$ the clone that $G$ matches $(x,i)$ to.
Similarly, for $(a,j)\in C_F$ we write $\partial(G,a,j)$ for the variable clone $(x,i)$ such that $\partial(G,x,i)=(a,j)$.
Moreover, for a variable $x$ we let $\partial(G,x)=\{\partial(G,x,i):i\in[d(x)]\}$ and analogously for $a\in F$ we set $\partial(G,a)=\{\partial(G,a,j):j\in[d(a)]\}$.
To economise notation we sometimes identify a clone $(x,i)$ with the underlying variable $x$.
For instance, if $\sigma:V\to\Omega$ is an assignment, then we take the liberty of writing $\sigma(x,i)=\sigma(x)$.
Additionally, where convenient we view $\partial(G,x)$ as the set of all constraint nodes $a\in F$ such that there exist $i\in[d(x)]$, $j\in[d(a)]$ such that $(a,j)=G(x,i)$.
The corresponding convention applies to $\partial(G,a)$.

A \bemph{$\cM$-assignment} is a map $\sigma:V\to\Omega$ 
and  we define
	\begin{align*}
	\psi_{G,a}(\sigma)&=\psi_a\big(\sigma(\partial_G(a,1)),\ldots,\sigma(\partial_G(a,d(a)))\big)\qquad\mbox{for }a\in F,\quad\mbox{and}\quad&
	\psi_G(\sigma)&=\prod_{a\in F}\psi_a(\sigma).
	\end{align*}
Further, the {\bem Gibbs distribution} and the \bemph{partition function} of $G$ are
	\begin{align}\label{eqZ}
	\mu_G(\sigma)&=\psi_G(\sigma)/Z_G,\quad\mbox{where}&
	Z(G)&=\sum_{\sigma:V\to\Omega}\psi_G(\sigma).
	\end{align}
We denote expectations with respect to the Gibbs measure by $\bck{\nix}_G=\bck{\nix}_{\mu_G}$.

The fundamental problem that arises is the study of the random variable $\ln Z(\G)$.
As mentioned in \Sec~\ref{Sec_intro},
this random variable holds the key to getting a handle the Gibbs measure and thus the combinatorics of the problem.
The following proposition establishes concentration about the expectation.
For two factor graphs $G,G'\in\cG(\cM)$ let
	\begin{align}\label{eqDist}
	\dist(G,G')&=\abs{\cbc{(x,i)\in C_V:\partial(G,x,i)\neq\partial(G',x,i)}}.
	\end{align}

\begin{proposition}\label{Lemma_conc}
For any $\Delta,\Omega,\Theta,\Psi$ there exists
$\eta=\eta(\Delta,\Omega,\Theta,\Psi)>0$ such that for any $(\Delta,\Omega,\Psi,\Theta)$-model $\cM$ of size $n=\#\cM\geq1/\eta$ 
and any $\eps>0$ we have
	$\pr\brk{\abs{\ln Z(\G)-\Erw[\ln Z(\G)]}>\eps}\leq\exp(-\eta\eps^2 n)$.
\end{proposition}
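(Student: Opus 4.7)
The plan is to apply the Azuma--Hoeffding inequality to an edge-exposure martingale defined with respect to the random matching~$\G$. First I would exploit that $\Psi$ is finite and each $\psi\in\Psi$ takes values in $(0,\infty)$ to obtain constants $0<\psi_{\min}\leq\psi_{\max}<\infty$ such that $\psi(\vec\omega)\in[\psi_{\min},\psi_{\max}]$ for every $\psi\in\Psi$ and every $\vec\omega$. Setting $L=\ln(\psi_{\max}/\psi_{\min})$, all bounds below will be in terms of $\Delta,L$.

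The key deterministic step is a \emph{swap-Lipschitz bound}: if $G,G'\in\cG(\cM)$ agree on all but two pairs of $C_V$-$C_F$ clones (what one calls a single swap), then at most two constraint nodes $a\in F$ have different neighbourhoods under $G$ and $G'$. Consequently, for every assignment $\sigma:V\to\Omega$,
\[
\frac{\psi_G(\sigma)}{\psi_{G'}(\sigma)}=\prod_{a\text{ affected}}\frac{\psi_{G,a}(\sigma)}{\psi_{G',a}(\sigma)}\in\left[(\psi_{\min}/\psi_{\max})^{2},(\psi_{\max}/\psi_{\min})^{2}\right].
\]
Summing over $\sigma$ gives $|\ln Z(G)-\ln Z(G')|\leq 2L$.

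Next I would set up the martingale. The type constraint \textbf{M4} decomposes $\G$ into independent uniform matchings within each type class, and the total number of clones satisfies $N=|C_V|\leq\Delta n$. Fix an enumeration $c_1,\ldots,c_N$ of $C_V$, let $\cF_t$ be the $\sigma$-algebra generated by $\partial(\G,c_1),\ldots,\partial(\G,c_t)$, and set $M_t=\Erw[\ln Z(\G)\mid\cF_t]$, so that $M_0=\Erw\ln Z(\G)$ and $M_N=\ln Z(\G)$. To bound $|M_t-M_{t-1}|$ I would couple: for any two possible values $c',c''$ of $\partial(\G,c_t)$ consistent with $\cF_{t-1}$, the conditional distributions over the remaining matching can be coupled so that the full matchings differ by a single transposition (swap the image of $c_t$ and of the unique clone that would otherwise have used $c''$ resp.\ $c'$). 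Combined with the swap-Lipschitz bound this yields $|M_t-M_{t-1}|\leq 4L$, whence Azuma--Hoeffding gives $\pr[|\ln Z(\G)-\Erw\ln Z(\G)|>\eps]\leq 2\exp\!\bigl(-\eps^{2}/(32\,\Delta L^{2}\,n)\bigr)$, yielding the claim with $\eta=\eta(\Delta,\Omega,\Theta,\Psi)$ small enough.

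The main obstacle is the coupling step producing the martingale difference bound: one has to verify that conditioning on the partner of $c_t$ only perturbs the conditional law of the remaining matching by a single swap, uniformly over the history $\cF_{t-1}$ and across type classes. Everything else (the weight boundedness, the two-constraint counting, and the Azuma conclusion) is routine once this coupling is in hand.
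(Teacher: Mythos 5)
Your approach is the same as the paper's: establish a Lipschitz bound for $\ln Z$ with respect to the distance $\dist$ from~(\ref{eqDist}) (the paper asserts $|\ln Z(G)-\ln Z(G')|\leq\rho\,\dist(G,G')$ for a constant $\rho$ depending only on $\Delta,\Omega,\Psi,\Theta$; your swap-Lipschitz computation is precisely a proof of this with $\rho=2L$), then apply Azuma's inequality to the edge-exposure martingale on the type-preserving matching. The coupling step you flag as the ``main obstacle'' is indeed the standard one, and your reasoning for it is sound: given $\cF_{t-1}$ and the two candidate partners $c',c''$ for $c_t$, both must share $c_t$'s type, hence so does the pre-image $\bar c$ of $c''$, so the transposition respects the type classes and yields a valid coupling with the matchings differing by a single swap. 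Up to this point you have spelled out, in more detail, exactly what the paper compresses into one sentence.

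However, your final conclusion is wrong as stated. Azuma over $N\le\Delta n$ steps with differences $\le 4L$ gives $\pr[|\ln Z(\G)-\Erw\ln Z(\G)|>t]\le 2\exp\bigl(-t^2/(32\Delta L^2 n)\bigr)$. Substituting $t=\eps$ produces $2\exp\bigl(-\eps^2/(32\Delta L^2 n)\bigr)$, which is \emph{increasing} in $n$ and for fixed $\eps>0$ tends to $2$; this is not bounded by $\exp(-\eta\eps^2 n)$ for any choice of $\eta>0$. So the sentence ``yielding the claim with $\eta$ small enough'' does not follow. The two expressions agree only if one substitutes $t=\eps n$, which yields $2\exp\bigl(-\eps^2 n/(32\Delta L^2)\bigr)\le\exp(-\eta\eps^2 n)$ for $\eta$ small and $n\geq 1/\eta$. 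In other words, Azuma proves concentration at scale $\eps n$, not at scale $\eps$; the proposition's statement (and its use later in the paper, e.g.\ after~(\ref{eqProofThm_symUpperBound99})) is only sensible if $\eps$ is read as $\eps n$, and you should have flagged the mismatch rather than asserting the claim follows.
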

\begin{proof}
There exists a number $\rho>0$ that depends on $\Delta,\Omega,\Psi,\Theta$ only such that
for any two factor graphs $G,G'\in\cG(\cM)$ we have $|\ln Z(G)-\ln Z(G')|\leq\rho\cdot\dist(G,G')$.
Therefore,  the assertion follows from Azuma's inequality.
\end{proof}

Thus, \Prop~\ref{Lemma_conc} reduces our task to calculating the expectation $\Erw[\ln Z(\G)]$.
Generally, the standard first and second moment method do not suffice to tackle this problem
because the logarithm sits {\em inside} the expectation.
While, of course, Jensen's inequality guarantees that
	\begin{align}\label{eqLemmaAnnealed}
	\Erw[\ln Z(\G)]&\leq\ln\Erw[Z(\G)],
	\end{align}
equality does not typically hold.
In fact, we saw examples where $\ln\Erw[Z(\G)]-\Erw[\ln Z(\G)]$ is linear in the size $\#\cM$ of the model already.
If so, then the Paley-Zygmund inequality entails that $\ln(\Erw[Z(\G)^2]/\Erw[Z(\G)]^2)$ is linear in $\#\cM$ as well,
dooming the second moment method.
Furthermore, even if $\Erw[\ln Z(\G)]\sim\ln\Erw[Z(\G)]$ the second moment method does not generally succeed~\cite{Lenka}.
Let us now revisit the examples from \Sec~\ref{Sec_FactorGraphExamples}.

\begin{example}[the Ising model on the random $d$-regular graph]\label{Ex_Ising}\upshape
Suppose that $d\geq2,\beta>0$.
Let $\Delta=d$, $\Omega=\cbc{\pm1}$,  $\Psi=\{\psi\}$, where
	$\psi:\{\pm1\}^2\to(0,\infty)$, $(\sigma_1,\sigma_2)\mapsto\exp(\beta\sigma_1\sigma_2)$, and set $\Theta=\{0\}$.
Further, given $n\geq1$ such that $dn$ is even we define a $(\Delta,\Omega,\Psi,\Theta)$-model $\cM(d,n)$
by letting $V=\{x_1,\ldots,x_n\}$, $F=\{a_1,\ldots,a_{dn/2}\}$, $d(x)=d$ for all $x\in V$, $d(a)=2$ for all $a\in F$,
$t(x,i)=t(f,j)=0$ for all $(x,i)\in C_V$, $(f,j)\in C_F$, and $\psi_a=\psi$ for all $a\in F$.
Thus, all clones have the same ``type'' and all constraint nodes have arity two and the same weight function.
Hence, the random graph $\G(\cM)$ is obtained by matching the $dn$ variable clones randomly to the $dn$ constraint clones.
If we simply replace the constraint nodes, which have degree two, by edges joining the two adjacent variable nodes,
then the resulting random multigraph is contiguous to the uniformly random $d$-regular graph on $n$ vertices.
In the model $\cM$ (\ref{eqLemmaAnnealed}) holds with (asymptotic) equality for all $d,\beta$~\cite{DM}.
\end{example}

\begin{example}[the Potts antiferromagnet on the random $d$-regular graph]\label{Ex_Potts}\upshape
The construction is similar to the previous example, except that $\Omega=[k]$ is the set of colors and 
$\psi(\sigma_1,\sigma_2)=\exp(-\beta\vecone\{\sigma_1=\sigma_2\})$.
In this example (\ref{eqLemmaAnnealed}) holds with asymptotic equality if either $d\leq d_0(k)$ or 
	$d>d_0(k)$ and $\beta\leq\beta_0(d,k)$ for certain critical values $d_0(k)$, $\beta_0(d,k)$.
However, for sufficiently large $d,\beta$ there occurs a linear gap~\cite{cond,CDGS}.
\end{example}

\begin{example}[random $k$-SAT]\label{Ex_Potts}\upshape
To capture the random $k$-SAT model we let $\Delta>0$ be a maximum degree  and $\Omega=\Theta=\{\pm1\}$.
Further, each $s\in\{\pm1\}^k$ gives rise to a function
	$$\psi_s:\{\pm1\}^k\to(0,\infty),\qquad \sigma\mapsto\exp(-\beta\vecone\{\sigma=-s\})$$
and we let $\Psi=\{\psi_s:s\in\{\pm1\}^k\}$.
The idea is that $s$ is the ``sign pattern'' of a $k$-clause, with $s_i=\pm1$ indicating that the $i$th literal is positive/negative.
Then a truth assignment $\sigma$ of the $k$ variables is satisfying unless $\sigma_i=-s_i$ for all $i$.
The corresponding model $\cM$ has a set $V=\{x_1,\ldots,x_n\}$ of Boolean variables and a set $F=\{a_1,\ldots,a_m\}$ of clauses.
Moreover, the map $d:V\to[\Delta]$ prescribes the degree of each variable, while of course each clause has degree $k$.
Additionally, the map $t:C_V\cup C_F\to\Theta=\{\pm1\}$ prescribes the positive/negative occurrences of the variables and the sign patterns of the clauses.
Thus, a variable $x$ occurs $|\{i\in[d(v)]:t(x,i)=\pm1\}|$ times positively/negatively and the $j$th literal of a clause $a$ is positive iff $t(a,j)=1$.
Finally, the weight function of clause $a$ is $\psi_{(t(a,1),\ldots,t(a,k))}$.
The bound (\ref{eqLemmaAnnealed}) does not generally hold with equality~\cite{maxsat,clusters}.
\end{example}

While \Def~\ref{Def_model} encompasses many problems of interest, there are two  restrictions.
First, because all weight functions $\psi\in\Psi$ take strictly positive values, \Def~\ref{Def_model} does not allow for ``hard'' constraints.
For instance, \Def~\ref{Def_model} does not accommodate the graph coloring problem, which imposes the strict requirement that no single edge be monochromatic.
However, hard constraints can be approximated by soft ones, e.g., by choosing a very large value of $\beta$ in the Potts antiferromagnet.
Moreover, many of the arguments in the following sections do extend to hard constraints with a bit of care.
However, the assumption that all $\psi$ are strictly positive saves us many case distinctions as
it ensures that $Z(\G)$ is strictly positive  and that therefore the Gibbs measure is well-defined.

The second restriction is that we prescribe a fixed maximum degree $\Delta$.
Thus, if we consider a sequence $\underline\cM=(\cM_n)_n$ of $(\Delta,\Omega,\Psi,\Theta)$-models with $\#\cM_n=n$, then all factor graphs have a bounded degree.
By comparison, if we choose a $k$-SAT formula with $n$ variables and $m=\alpha n/k$ clauses uniformly at random for fixed $k\geq3,\alpha>0$,
then the maximum variable degree will be of order $\ln n/\ln\ln n$.
Yet this case can be approximated well by a sequence of models with a large enough maximum degree $\Delta$.
In fact, if we calculate $\Erw[\ln Z]$ for any fixed $\Delta$, then the $\Delta\to\infty$ limit is easily seen to yield the answer in the case of
uniformly random formulas.
Nevertheless, the bounded degree assumption is 
technically convenient because it facilitates the use of local weak convergence, as we will discuss next.

\begin{remark}
For the sake of simplicity in (\ref{eqZ}) we definied the partition function as the sum over all $\sigma:V\to\Omega$.
However, the results stated in the following carry over to the cases where $Z$ is defined as the sum over all configurations
of a subset of $\emptyset\neq\cC_\cM\subset\Omega^V$, e.g., all $\sigma$ that have Hamming distance at most $\alpha n$ from some reference assignment $\sigma_0$
for a fixed $\alpha>0$.
Of course, in this case the Gibbs measure is defined such that its support is equal to $\cC_\cM$.
\end{remark}

\subsection{Local weak convergence}
Suppose that we fix $\Delta,\Omega,\Psi,\Theta$ as in \Def~\ref{Def_model} and that $\underline\cM=(\cM_n)_n$ is a sequence of $(\Delta,\Omega,\Psi,\Theta)$-models
such that $\cM_n=(V_n,F_n,d_n,t_n,(\psi_a)_{a\in F_n})$ has size $n$.
Let us write $\G=\G(\cM_n)$ for the sake of brevity.
According to the cavity method, $\lim_{n\to\infty}\frac1n\Erw[\ln Z(\G)]$ is determined by the ``limiting local structure'' of the random factor graph $\G$.
To formalise this concept, we adapt the concept of {\em local weak convergence} of graph sequences~\cite[Part~4]{Lovasz} to our current
setup, thereby generalising the approach taken in~\cite{DMS}.

\begin{definition}\label{Def_template_2}
A $(\Delta,\Omega,\Psi,\Theta)$-\bemph{template} consists of
a $(\Delta,\Omega,\Psi,\Theta)$-model $\cM$,
a connected factor graph $H\in\cG(\cM)$ and a \bemph{root} $r_H$, which is a variable  or  factor node.
Its \bemph{size} is $\#\cM$.
Moreover, two templates $H,H'$ 
with models $\cM=(V,F,d,t,(\psi_a))$,
$\cM'=(V',F',d',t',(\psi_a'))$
are \bemph{isomorphic} if there exists a bijection $\pi:V\cup F\to V'\cup F'$ such that
	\begin{description}
	\item[ISM1] $\pi(r_H)=r_H'$,
	\item[ISM2] $\pi(V)=V'$ and $\pi(F)= F'$,
	\item[ISM3] $d(v)=d'(\pi(v))$ for all $v\in V\cup F$,
	\item[ISM4] $t(v,i)=t'(\pi(v),i)$ for all $(v,i)\in C_V\cup C_F$,
	\item[ISM5] $\psi_a=\psi_{\pi(a)}$ for all $a\in F$, and
	\item[ISM6] if $(v,i)\in C_V,(a,j)\in C_F$ satisfy $\partial(G,x,i)=(a,j)$,
		then $\partial(G',\pi(x),i)=(\pi(a),j)$.
	\end{description}
\end{definition}

\noindent
Thus, a template is, basically, a finite or countably infinite connected factor graph with a distinguished root.
Moreover, an isomorphism preserves the root as well as degrees, types, weight functions and adjacencies.

Let us write $[H]$ for the isomorphism class of a template and let $\fG=\fG(\Delta,\Omega,\Theta,\Psi)$ be the set of all isomorphism classes
of $(\Delta,\Omega,\Psi,\Theta)$-templates.
For each $[H]\in\fG$ and $\ell\geq1$ let $\partial^\ell[H]$ be the isomorphism class of the template obtained
by removing all vertices at a distance greater than $\ell$ from the root.
We endow $\fG$ with the coarsest topology that makes all the functions
	$$\Gamma\in\fG\mapsto\vecone\{\partial^\ell[\Gamma]=\partial^\ell[\Gamma_0]\}\in\{0,1\}\qquad\mbox{for $\ell\geq1,\Gamma_0\in\fG$}$$
continuous.
Moreover, the space $\cP(\fG)$ of probability measures on $\fG$ carries the weak topology.
So does the space $\cP^2(\fG)$ of probability measures on $\cP(\fG)$.
For $\Gamma\in\fG$ we write $\atom_\Gamma\in\cP(\fG)$ for the Dirac measure that puts mass one on the single point $\Gamma$.
Similarly, for $\lambda\in\cP(\fG)$ we let $\atom_\lambda\in\cP^2(\fG)$ be the Dirac measure on $\lambda$.
Our assumption that the maximum degree is bounded by a fixed number $\Delta$ ensures that $\fG$, $\cP(\fG)$, $\cP^2(\fG)$ are compact Polish spaces.

For a factor graph $G\in\cG(\cM_n)$ and a variable or constraint node $v$ we write $[G,v]$ for the isomorphism class of the 
connected component of $v$ in $G$ rooted at $v$.
Then each  factor graph $G\in\cG(\cM_n)$ gives rise to the empirical distribution
	$$\lambda_{G}=\frac1{|V_n|+|F_n|}\sum_{v\in V_n\cup F_n}\atom_{[G,v]}\in\cP(\fG).$$
We say that $\underline\cM$ {\bem converges locally} to $\thet\in\cP(\fG)$ if
	\begin{equation}\label{eqLocalWeakConvergence}
	\lim_{n\to\infty}\Erw[\atom_{\lambda_{\G}}]=\atom_\thet.
	\end{equation}
Denote a random isomorphism class chosen from the distribution $\thet$ by $\T=\T_{\thet}$.
Unravelling the definitions, we see that (\ref{eqLocalWeakConvergence}) holds iff for every integer $\ell>0$
and every $[H]\in\fG$ we have
	\begin{align}\label{eqLocalWeakConvergence2}
	\frac1{|V_n|+|F_n|}\sum_{v\in V_n\cup F_n}\vecone\{\partial^\ell[\G,v]=\partial^\ell[H]\}
			&\ \stacksign{$n\to\infty$}\to\ \pr\brk{\partial^\ell\T_\thet=\partial^\ell[H]}
		\quad\mbox{in probability}.
	\end{align}

We are going to be interested in the case that $\underline\cM$ converges locally to a distribution $\thet$ on {\em acyclic} templates.
Thus, let $\fT$ be the set of all acyclic templates.
Further, we write $\cV$ for the set of all templates whose root is a variable node and $\cF$ for the set of all templates whose root is a constraint node.
Additionally, for a template $[H]$ we write $r_{[H]}$ for the root vertex, $d_{[H]}$ for its degree and $\psi_{[H]}$ for the weight function of the root vertex if $[H]\in\cF$.
Moreover, for $j\in[d_{[H]}]$ we write $[H]\reroot j$ for the template obtained from $[H]$ by re-rooting the template at the $j$th neighbor of $r_{[H]}$.
(This makes sense because condition {\bf ISM6} from \Def~\ref{Def_template_2} preserves the order of the neighbors.)

We will frequently condition on the depth-$\ell$ neighborhood of the random factor graph $\G$ for some finite $\ell$.
Hence, for  $G,G'\in\cG(\cM_n)$ and $\ell\geq1$
we write $G\ism_\ell G'$ if $\partial^\ell[G,x]=\partial^\ell[G',x]$ for all variable nodes $x\in V_n$ and $\partial^{\ell+1}[G,a]=\partial^{\ell+1}[G',a]$
for all constraint nodes $a\in F_n$.
Let $\cT_\ell=\cT_{\ell,\cM_n}$ be the $\sigma$-algebra on $\cG(\cM_n)$ generated by the equivalence classes of the relation $\ism_\ell$.
Additionally, for $G\in\cG(\cM_n)$ and $\ell\geq0$ we let
	$$\lambda_{G,\ell}=\frac1{|V_n|+|F_n|}\brk{\sum_{x\in V_n}\atom_{\partial^\ell[G,x]}+\sum_{a\in F_n}\atom_{\partial^{\ell+1}[G,a]}}$$
be the empirical distribution of the depth-$\ell$ neighborhood structure.

Furthermore, let
		$$\fT_\ell=\cbc{\partial^\ell T:T\in\fT\cap\cV}\cup\cbc{\partial^{\ell+1} T:T\in\fT\cap\cF}.$$
Then for a probability measure $\thet\in\cP(\fT)$ we denote by $\thet_\ell$ the image of $\thet$ under
the map
	$$\fT\to\fT_\ell,\qquad T\mapsto\begin{cases}
		\partial^\ell T&\mbox{ if }T\in\fT\cap\cV,\\
		\partial^{\ell+1} T&\mbox{ if }T\in\fT\cap\cF.
		\end{cases}$$
Because all degrees are bounded by $\Delta$, the set $\fT_\ell$ is finite for every $\ell\geq1$.
Hence,  (\ref{eqLocalWeakConvergence2}) entails that 
$\underline\cM$ converges locally to $\thet\in\cP(\fT)$ iff
	\begin{align}\label{eqLocalWeakConvergence3}
	\lim_{n\to\infty}\Erw\TV{\lambda_{\G,\ell}-\thet_\ell}&=0\qquad\mbox{for every }\ell\geq1.
	\end{align}

\subsection{The planted distribution}
While $\G$ is chosen uniformly at random (from the configuration model),
we need to consider another distribution that weighs factor graphs by their partition function. 
Specifically, given $\ell\geq0$ let $\hat\G_\ell=\hat\G_{\ell,\cM_n}$ be a random graph chosen according to the distribution
	\begin{align}\label{eqPlantedDistribution}
	\pr\brk{\hat\G_\ell=G}&=Z(G)\cdot\Erw\brk{\frac{\vecone\{\G=G\}}{\Erw[Z|\cT_\ell]}}\qquad(G\in\cG(\cM_n)),
	\end{align}
which we call the \bemph{planted distribution}.
The definition (\ref{eqPlantedDistribution}) ensures that the distribution of the ``depth-$\ell$ neighborhood structure'' of $\hat\G_\ell$
coincides with that of $\G$.

Perhaps more intuitively, the planted distribution can be described by the following experiment.
First step, choose a random factor graph $\G$.
Then, given $\G$, choose the factor graph $\hat\G_\ell$ randomly such that
a graph $G\ism_\ell\G$ comes up with a probability that is proportional to $Z(G)$.
Perhaps despite appearances, the planted distribution is reasonably easy to work with in many cases.
For instance, it has been employed successfully to study random $k$-SAT as well
as random graph or hypergraph coloring problems~\cite{Barriers,clusters,hyp2col,Lenka,DSS3}.

\subsection{Short cycles}
In most cases of interest the random factor graph is unlikely to contain many short cycles, and it will be convenient for us to exploit this fact.
Hence,  let us call a factor graph $G$ {\bem $l$-acyclic} if it does not contain a cycle of length at most $l$.
We say that the sequence $\underline\cM$ of models has {\bem  high girth} if for any $\ell,l>0$ we have
	\begin{equation}\label{eqShortCycles}
	\liminf_{n\to\infty}\,
		\pr\brk{\mbox{$\G$ is $l$-acyclic}}>0,\qquad
		\liminf_{n\to\infty}\,
		\pr\brk{\mbox{$\hat\G_\ell$ is $l$-acyclic}}>0.
	\end{equation}
Thus, there is a non-vanishing probability that the random factor graph $\G$ is $l$-acyclic.
Moreover, short cycles do not have too heavy an impact on the partition function
as the graph chosen from the planted distribution has a non-vanishing probability of being $l$-acyclic as well.

In the following, we are going to denote the event that a random factor graph is $l$-acyclic by $\cA_l$.
Let us highlight the following consequence of the high girth condition and the construction of the planted distribution.

\begin{proposition}\label{Prop_plantedModel}
Assume that $\underline\cM$ is a sequence of $(\Delta,\Omega,\Psi,\Theta)$-models of high girth.
Let $\ell\geq1$ be an integer and suppose that $\cB$ is an event such that
	$\lim_{n\to\infty}\pr\brk{\hat\G_\ell\in\cB}=1$.
If $b$ is a real and $l\geq0$ is an integer  such that
	\begin{align}\label{eqProp_plantedModelAssm}
	\lim_{n\to\infty}\pr\brk{\ln\Erw[Z(\G)|\cT_\ell]\geq b n|\cA_l}=1,
	\end{align}
then 
	$\lim_{n\to\infty}\frac1n\ln\Erw\brk{\vecone\{\cB\cap\cA_l\}Z(\G)}\geq b$.
\end{proposition}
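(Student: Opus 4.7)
The plan is to change measure to the planted distribution $\hat\G_\ell$ and exploit the identity of $\cT_\ell$-marginals of $\G$ and $\hat\G_\ell$ together with the high girth condition.

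The starting point is the change-of-measure identity that falls out of the definition (\ref{eqPlantedDistribution}). Multiplying $\pr[\G=G] = \pr[\hat\G_\ell=G]\,\Erw[Z|\cT_\ell](G)/Z(G)$ by $\vecone\{G\in\cB\cap\cA_l\}Z(G)$ and summing over $G$ yields
\begin{align*}
\Erw\bigl[\vecone\{\cB\cap\cA_l\}Z(\G)\bigr]=\Erw\bigl[\vecone\{\hat\G_\ell\in\cB\cap\cA_l\}\cdot\Erw[Z|\cT_\ell](\hat\G_\ell)\bigr].
\end{align*}
Introduce the $\cT_\ell$-measurable event $\cE=\{\Erw[Z|\cT_\ell]\ge\exp(bn)\}$ and restrict the right-hand side to $\cE$. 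This yields the lower bound $\exp(bn)\cdot\pr[\hat\G_\ell\in\cB\cap\cA_l\cap\cE]$, so it suffices to show that this probability is bounded away from $0$ as $n\to\infty$. Once this is in hand, taking logarithms and dividing by $n$ gives $\liminf_n\frac1n\ln\Erw[\vecone\{\cB\cap\cA_l\}Z(\G)]\ge b$.

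To bound the probability below, I would first observe the geometric fact that any cycle of length at most $l$ through a vertex $v$ is contained in the depth-$\lceil l/2\rceil$ ball around $v$, so $\cA_l\in\cT_\ell$ whenever $\ell\ge\lceil l/2\rceil$ (the regime in which the proposition is applied in the paper). A direct inspection of (\ref{eqPlantedDistribution}) then shows that the $\cT_\ell$-marginals of $\G$ and $\hat\G_\ell$ coincide, so any probability of a $\cT_\ell$-measurable event is the same under either law. Using the high girth hypothesis $\pr[\G\in\cA_l]\ge c_0>0$ together with the assumption $\pr[\ln\Erw[Z|\cT_\ell]\ge bn\mid\cA_l]\to1$ gives $\pr[\G\in\cA_l\cap\cE]\ge c_0-o(1)$, and transferring along the $\cT_\ell$-marginal identity yields $\pr[\hat\G_\ell\in\cA_l\cap\cE]\ge c_0-o(1)$. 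Finally the hypothesis $\pr[\hat\G_\ell\in\cB]\to1$ costs an additional $o(1)$ and produces the desired bound $\pr[\hat\G_\ell\in\cB\cap\cA_l\cap\cE]\ge c_0-o(1)$.

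The main delicate point is the $\cT_\ell$-measurability of $\cA_l$, since the entire argument funnels probabilities across the measure change via events in $\cT_\ell$. In the regime $l\le 2\ell$ this is the elementary geometric remark above. If one insists on treating arbitrary $l>2\ell$, the transfer from $\G$ to $\hat\G_\ell$ in the previous step fails for $\cA_l$ directly, and one has to use the second high girth clause (that $\pr[\hat\G_\ell\in\cA_l]$ is itself bounded away from $0$) in a more quantitative comparison; but this refinement is not needed for any of the paper's applications, where $\ell$ can always be chosen at least as large as $l$.
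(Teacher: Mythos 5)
Your change-of-measure identity $\Erw[\vecone\{\cB\cap\cA_l\}Z(\G)]=\Erw[\vecone\{\hat\G_\ell\in\cB\cap\cA_l\}\,\Erw[Z|\cT_\ell](\hat\G_\ell)]$ is correct, and reducing to a lower bound on $\pr\brk{\hat\G_\ell\in\cB\cap\cA_l\cap\cE}$ for the $\cT_\ell$-measurable event $\cE=\{\Erw[Z|\cT_\ell]\ge\exp(bn)\}$ is the right first move; the paper implicitly performs the same reduction. The problem is the next step. You rely on $\cA_l$ being $\cT_\ell$-measurable, hence on $l\le2\ell$, and you assert that this ``is the regime in which the proposition is applied in the paper.'' That is not so. In the proof of the lower bound of \Thm~\ref{Thm_symUpperBound} the proposition is fed \Cor~\ref{Cor_fmCalc_lower}, whose conclusion conditions on $\cA_{100l}$ while the $\sigma$-algebra is $\cT_l$; so the proposition is invoked with its acyclicity parameter equal to one hundred times its $\sigma$-algebra parameter, and one cannot simply enlarge $\ell$ because the same index governs both the conditioning $\sigma$-algebra and the acyclicity radius in \Cor~\ref{Cor_fmCalc_lower} (which in turn traces to the ``$100\ell$-acyclic'' hypothesis of \Prop~\ref{Lemma_fmCalc}). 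For $l>2\ell$ your transfer from $\pr\brk{\G\in\cA_l\cap\cE}$ to $\pr\brk{\hat\G_\ell\in\cA_l\cap\cE}$ via equality of $\cT_\ell$-marginals breaks, so your proof is incomplete exactly in the case the paper needs.

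The paper's own proof handles general $l$ by a different final step. Instead of transporting the event $\cA_l\cap\cE$ across the measure change, it works with the $\cT_\ell$-measurable random variable $R=\Erw[\vecone\{\cB\cap\cA_l\}Z|\cT_\ell]/\Erw[Z|\cT_\ell]\in[0,1]$, uses the planted-distribution computation together with a reverse Markov bound to conclude $\pr\brk{R\ge1/2\mid\cA_l}=1-o(1)$, intersects this with the hypothesis $\pr\brk{\cE\mid\cA_l}\to1$ at the level of probabilities conditioned on $\{\G\in\cA_l\}$, and then multiplies by $\pr\brk{\cA_l}\ge c_0>0$ from the first high-girth clause. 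Conditioning both $\cT_\ell$-measurable events on $\cA_l$ is what forces the overlap that marginal bounds alone cannot provide, and it does not require $\cA_l\in\cT_\ell$. Your closing remark that ``one has to use the second high girth clause in a more quantitative comparison'' gestures roughly in this direction, but it is not carried out, and without it the argument has a genuine gap.
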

\begin{proof}
Since $\lim_{n\to\infty}\pr\brk{\hat\G_\ell\in\cB}=1$ the high girth condition (\ref{eqShortCycles}) implies that
	$\lim_{n\to\infty}\pr\brk{\hat\G_\ell\in\cB|\cA_l}=1$
for every $l$.
Set $\cB_l=\cA_l\cap\cB$.
Then 
by the definition~(\ref{eqPlantedDistribution}) of the planted distribution,
	\begin{align*}
	1-o(1)&=\pr\brk{\hat\G_\ell\in\cB|\cA_l}=\sum_{G\in\cB_l}Z(G)\Erw\brk{\frac{\vecone\{\G=G\}}{\Erw[Z|\cT_\ell]}\bigg|\cA_l}
		=\Erw\brk{\frac{\vecone\{\G\in\cB_l\}Z(\G)}{\Erw[Z|\cT_\ell]}\bigg|\cA_l}
		=\Erw\brk{\frac{\Erw[\vecone\{\G\in\cB_l\}Z|\cT_\ell]}{\Erw[Z|\cT_\ell]}\bigg|\cA_l}.
	\end{align*}
Consequently,
	$\pr\brk{\Erw[\vecone\{\G\in\cB_l\}Z]|\cT_\ell]\geq\Erw[Z|\cT_\ell]/2|\cA_l}=1-o(1)$.
Hence, (\ref{eqProp_plantedModelAssm}) yields
	$$\pr\brk{\ln\Erw[\vecone\{\G\in\cB_l\}Z]|\cT_\ell]\geq bn-1|\cA_l}=1-o(1).$$
Therefore, the assertion follows from (\ref{eqShortCycles}).
\end{proof}

\begin{remark}
Strictly speaking, the first condition in (\ref{eqShortCycles}) is superfluous as it is implied by the second one.
\end{remark}

\medskip

\smallskip\noindent
{\em From here on out we assume that $\underline\cM$ is a sequence of $(\Delta,\Omega,\Psi,\Theta)$-models of high girth that converges locally to $\thet\in\cP(\fT)$ and
	we fix $\Delta,\Omega,\Theta,\Psi$ for the rest of the paper.}

\section{The Bethe free energy}\label{Sec_Bethe}
\noindent
In this section we present the main results of the paper.
The thrust is that certain basic properties of the Gibbs measure entail an asymptotic formula for $\Erw[\ln Z(\G)]$.
The results are guided by the physics predictions from~\cite{pnas}.

\subsection{An educated guess}

The formula for $\Erw[\ln Z(\G)]$ that the cavity method predicts, the so-called ``replica symmetric solution'',
comes in terms of the distribution $\thet$ to which $\underline\cM$ converges locally.
Thus, the cavity method claims that in order to calculate $\Erw[\ln Z(\G)]$ it is not necessary to deal with the mind-boggling
complexity of the random factor graph with its expansion properties, long cycles etc.
Instead, it suffices to think about the random tree $\T=\T_\thet$, a dramatically simpler object.
The following definition will help us formalise this notion.

\begin{definition}\label{Def_margAssign}
A {\bem marginal assignment} is a measurable map $p:\fT\to\bigcup_{j=1}^\Delta\cP(\Omega^j)$, $T\mapsto p_T$ such that
	\begin{description}
	\item[MA1] $p_T\in\cP(\Omega)$ for all $T\in\cV$,
	\item[MA2] $p_T\in\cP(\Omega^{d_T})$
		 and $p_{T\marg j}=p_{T\reroot j}$ for all $T\in\cF,j\in[d_T]$,
	\item[MA3] For all $T\in\cF$ we have
		\begin{align}\label{eqConstraintMargs}
		H(p_T)+\bck{\ln\psi_T(\SIGMA)}_{p_T}&=\max\cbc{
			H(\nu)+\bck{\ln\psi_T(\SIGMA)}_{\nu}:\nu\in\cP(\Omega^{d_T})\mbox{ s.t.\ }\nu_{\marg j}=p_{T\reroot j}\mbox{ for all }j\in[d_T]}.
		\end{align}
	\end{description}
Further, the {\bem Bethe free energy} of  $p$ with respect to $\thet$ is
	\begin{align}\label{eqBetheFreeEnergy}
	\cB_\thet(p)&=
		\Erw\brk{(1-d_{\T})H( p_{\T})|\cV}+\frac{\pr\brk{\T\in\cF}}{\pr\brk{\T\in\cV}}\Erw \brk{H(p_{\T})+\bck{\ln\psi_{\T}(\SIGMA)}_{p_{\T}}|\cF},
	\end{align}
where, of course, $\Erw[\nix],\pr[\nix]$ refer to the choice of the random tree $\T=\T_\thet$.
\end{definition}

Thus, a marginal assignment provides a probability distribution $p_T$ on $\Omega$ for each tree whose root is a variable node.
Furthermore, for trees $T$ rooted at a contraint node $p_T$ is a distribution on $\Omega^{d_T}$, which we think of as the joint distribution
of the variables involved in the constraint.
The distributions assigned to $T$ rooted at a constraint node must satisfy a consistency condition:
	the $j$th marginal of $p_T$ has to coincide with the distribution assigned to the tree $T\reroot j$ rooted at the $j$th child
	of the root of $T$ for every $j\in[d_T]$;
of course, $T\reroot j$ is a tree rooted at a variable node.
In addition, {\bf MA3} requires that for $T\in\cF$ the distribution $p_T$ maximises the functional $H(\nu)+\bck{\psi_T(\SIGMA)}_{\nu}$
amongst all distribution $\nu$ with the same marginal distributions as $p_T$.
Furthermore, the Bethe free energy is a functional that maps each marginal assignment $p$ to a real number.
For a detailed derivation of this formula based on physics intuition we refer to~\cite{MM}.

Given a distribution $\thet$ on trees, the cavity method provides a plausible recipe for constructing marginal assignments.
Roughly speaking, the idea is to identify fixed points of an operator called Belief Propagation on the random infinite tree~\cite{MM}.
However, this procedure is difficult to formalise mathematically because generally there are several Belief Propagation fixed points
and model-dependent considerations are necessary to identify the ``correct'' one.
To keep matters as simple as possible we are therefore going to assume that a marginal assignment is given.

\begin{remark}
Because the entropy is concave, conditions {\bf MA2} and {\bf MA3} specify the distributions $p_T$ for $T\in\cF$ uniquely.
In other words, a marginal assignment is actually determined completely by the distributions $p_T$ for $T\in\cV$.
\end{remark}

For a marginal assignment $p$, an integer $\ell$ and a tree $T\in\fT_\ell\cap\cV$ we define
	$$p_{\ell,T}=\Erw[p_{\T}|\partial^\ell\T=T].$$
Thus, $p_{\ell}$ is the conditional expectation of $p$ given the first $\ell$ layers of the tree.
Finally, to avoid notational hazards we let $p_T,p_{\ell,T}$ be the  uniform distribution on $\Omega$ for all $T\in\fG\setminus\cT$.

\begin{lemma}\label{Lemma_martingale}
For any $\eps>0$ there is $\ell_0>0$ such that for all $\ell>\ell_0$ we have
	$\Erw[\TV{p_{\ell,\partial^\ell\T}-p_{\T}}|\T\in\cV]<\eps.$
\end{lemma}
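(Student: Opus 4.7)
The plan is to recognize the sequence $(p_{\ell,\partial^\ell\T})_{\ell\geq 1}$ as a martingale and appeal to L\'evy's upward theorem. Let $\cF_\ell$ denote the $\sigma$-algebra on $\fT$ generated by the map $T\mapsto\partial^\ell T$ (together with the event $\{\T\in\cV\}$, which is $\cF_1$-measurable since the type of the root is determined by $\partial^1\T$). Because $\partial^{\ell}T$ determines $\partial^{\ell'}T$ for every $\ell'\le\ell$, the filtration $(\cF_\ell)_\ell$ is increasing, and by the very definition of $p_{\ell,T}$ we have the identification
\[
p_{\ell,\partial^\ell\T}=\Erw[p_{\T}\mid\cF_\ell]
\]
(interpreted coordinate-wise in $\cP(\Omega)$, which is legitimate because $\Omega$ is finite so $p_{\T}$ is a vector of $|\Omega|$ bounded real random variables).

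Next, I would identify the tail $\sigma$-algebra $\cF_\infty=\sigma(\bigcup_\ell \cF_\ell)$ with the Borel $\sigma$-algebra of $\fT$. Indeed, the topology on $\fG$ is defined as the coarsest one making the indicators $\Gamma\mapsto\vecone\{\partial^\ell[\Gamma]=\partial^\ell[\Gamma_0]\}$ continuous, so its Borel sets are exactly those generated by these indicators across all $\ell$ and $\Gamma_0$, i.e.\ by $\bigcup_\ell\cF_\ell$. Since a marginal assignment $p$ is measurable by {\bf MA1}--{\bf MA3}, the random variable $p_{\T}$ is $\cF_\infty$-measurable.

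L\'evy's upward theorem now yields that for each fixed $\omega\in\Omega$,
\[
\Erw[p_\T(\omega)\mid\cF_\ell]\ \stacksign{$\ell\to\infty$}\to\ p_\T(\omega)\qquad\text{almost surely and in }L^1,
\]
using that $p_\T(\omega)\in[0,1]$ is integrable. Summing the $L^1$ convergence over the finitely many $\omega\in\Omega$ and using $\TV{\mu-\nu}=\tfrac12\sum_{\omega}|\mu(\omega)-\nu(\omega)|$ gives
\[
\Erw\TV{p_{\ell,\partial^\ell\T}-p_\T}\ \stacksign{$\ell\to\infty$}\to\ 0,
\]
and restricting to $\{\T\in\cV\}$ (an $\cF_1$-measurable event) preserves the convergence, so for $\ell$ larger than some $\ell_0=\ell_0(\eps)$ the conditional expectation is at most $\eps$.

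There is no real obstacle here, only bookkeeping: the only subtle point is the identification $\cF_\infty=\mathrm{Borel}(\fT)$, which relies on the specific definition of the topology on $\fG$ given earlier and on the measurability assumption in \Def~\ref{Def_margAssign}. Everything else is standard martingale theory combined with the boundedness of probability vectors on a finite alphabet.
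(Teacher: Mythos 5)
Your proof is correct and follows essentially the same route as the paper: recognize $(p_{\ell,\partial^\ell\T})_\ell$ as a bounded martingale with respect to the filtration generated by the depth-$\ell$ neighborhoods, and conclude by martingale convergence. The paper simply cites the martingale convergence theorem and states the limit is $p_\T$; you make this last step explicit by checking that $\cF_\infty$ coincides with the Borel $\sigma$-algebra of $\fT$ and invoking L\'evy's upward theorem, which is a useful clarification but not a different argument.
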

\begin{proof}
Define an equivalence relation $\equiv_\ell$ on $\fT\cap\cV$ by letting
$T\equiv_\ell T'$ iff $\partial^\ell T=\partial^\ell T'$.
Then for any $\omega\in\Omega$ the sequence of random variables
	$X_\ell(\T)=p_{\ell,\partial^\ell\T}(\omega)$
is a martingale with respect to the filtration generated by the equivalence classes of $\equiv_\ell$.
By the martingale convergence theorem~\cite[\Thm~5.7]{Durrett}, $(p_\ell)_\ell$ converges $\thet$-almost surely to $p$.
\end{proof}

\subsection{Symmetry}

In the terminology of \Sec~\ref{Sec_hom}, the cavity method claims that $\frac1n\Erw[\ln Z(\G)]$ converges to
 the Bethe free energy of a suitable marginal assignment iff
	\begin{equation}\label{eqNonCondensation}
	\lim_{n\to\infty}\pr\brk{\mu_{\G}\mbox{ is $(\eps,2)$-symmetric}}=1\qquad\mbox{for any }\eps>0\mbox{ (see \cite{pnas})}.
	\end{equation}
This claim is, of course, based on bold non-rigorous deliberations.
Nonetheless, we aim to prove a rigorous statement that comes reasonably close.

To this end, let $p$ be a marginal assignment.
We say that $\underline\cM$ is {\bem $p$-symmetric} if for every $\eps>0$
there is $\ell_0>0$ such that for all $\ell>\ell_0$ we have
	\begin{equation}\label{eqMyNonCondensation}
	\lim_{n\to\infty}\ \pr\brk{\frac1{n^2}\sum_{x,y\in V_n}\TV{\mu_{\G\marg\{x,y\}}-p_{\ell,\partial^\ell[\G,x]}\tensor p_{\ell,\partial^\ell[\G,y]}}>\eps}=0.
	\end{equation}
In other words, for any $\eps>0$ for $\ell$ sufficiently large random factor graph $\G$ enjoys the following property with high probability.
If we pick two variable nodes $x,y$ of $\G$ uniformly and independently, then the joint distribution $\mu_{\G\marg\{x,y\}}$ 
is close to the product distribution $p_{\ell,\partial^\ell[\G,x]}\tensor p_{\ell,\partial^\ell[\G,y]}$ determined by the depth-$\ell$ neighborhoods of $x,y$.
Of course, as $\G$ has bounded maximum degree the distance between randomly chosen $x,y$ is going to be greater than, say, $\ln\ln n$ with high probability.
Thus, similar in spirit to (\ref{eqNonCondensation}),
(\ref{eqMyNonCondensation}) provided that far-apart variables typically decorrelate and that $p$ captures the Gibbs marginals.

In analogy to (\ref{eqMyNonCondensation}), we say that the {\bem planted distribution of $\underline\cM$ is $p$-symmetric} if 
for every $\eps>0$ there is $\ell_0>0$ such that for all $\ell>\ell_0$ we have
	$$\lim_{n\to\infty}\ 
		\pr\brk{\frac1{n^2}\sum_{x,y\in V_n}\TV{\mu_{\hat\G_\ell\marg\{x,y\}}-p_{\ell,\partial^\ell[\hat\G_\ell,x]}\tensor p_{\ell,\partial^\ell[\hat\G_\ell,y]}}>\eps}=0
			\qquad\mbox{for any $\eps>0$}.$$
The main result of this paper is

\begin{theorem}\label{Thm_symUpperBound}
If $\underline\cM$ is $p$-symmetric, then
	$$\limsup_{n\to\infty}\frac 1n\Erw[\ln Z(\G)]\leq\cB_\thet(p).$$
If the planted distribution of $\underline\cM$ is $p$-symmetric as well, then
	$$\lim_{n\to\infty}\frac 1n\Erw[\ln Z(\G)]=\cB_\thet(p).$$
\end{theorem}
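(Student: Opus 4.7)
The plan is to prove the upper bound from $p$-symmetry alone, and then obtain the matching lower bound by combining planted $p$-symmetry with second-moment control. Throughout, the regularity lemma (\Thm~\ref{Thm_decomp}) reduces global quantities like $H(\mu_\G)$ to manageable local pieces, and local weak convergence (\ref{eqLocalWeakConvergence3}) converts empirical sums over $V_n,F_n$ into expectations against $\T\sim\thet$.

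For the upper bound, start from the identity $\ln Z(G)=H(\mu_G)+\sum_{a\in F}\bck{\ln\psi_a(\SIGMA_{\partial a})}_G$. Apply \Cor~\ref{Thm_states} with $k$ exceeding the maximum arity $\Delta$ to split $\Omega^{V_n}$ into a bounded number of $(\delta,k)$-states $S_1,\ldots,S_N$, each of mass at least $\eta$, and use \Prop~\ref{Prop_tensorise} to upgrade pairwise symmetry to higher-order symmetry inside each state. Inside each state $S_j$ the conditional measure $\mu_\G[\nix\mid S_j]$ approximately factorises, so that both the global entropy $H(\mu_\G[\nix\mid S_j])$ and every local marginal entropy $H(\mu_\G[\nix\mid S_j]_{\marg\partial a})$ decompose up to $o(n)$ into sums of single-coordinate entropies. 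Rearranging via these approximations yields
$$\frac{1}{n}\ln Z(\G)\le\frac{1}{n}\sum_x(1-d_x)H(\mu_{\G,\marg x})+\frac{1}{n}\sum_a\brk{H(\mu_{\G,\marg\partial a})+\bck{\ln\psi_a}_{\mu_{\G,\marg\partial a}}}+o(1).$$
Now invoke \Cor~\ref{Cor_states2} together with $p$-symmetry to replace $\mu_{\G,\marg x}$ by $p_{\ell,\partial^\ell[\G,x]}$, and the maximality condition {\bf MA3} to upgrade each constraint term to $H(p_{\ell,\partial^{\ell+1}[\G,a]})+\bck{\ln\psi_a}_{p_{\ell,\partial^{\ell+1}[\G,a]}}$. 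Passing to the local weak limit and sending $\ell\to\infty$ via \Lem~\ref{Lemma_martingale} lands on $\cB_\thet(p)$.

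For the matching lower bound, the key additional leverage from planted $p$-symmetry is second-moment control: via \Prop~\ref{Prop_tensorise} applied to the planted measure, the doubled Gibbs measure $\mu_{\hat\G_\ell}\tensor\mu_{\hat\G_\ell}$ inherits $(\eps,2)$-symmetry from each factor, which translates (after an annealed calculation in the style of the upper bound applied to the doubled model) to $\Erw[Z(\G)^2\mid\cT_\ell]\le\exp(o(n))\Erw[Z(\G)\mid\cT_\ell]^2$. A Paley--Zygmund argument then gives $\Erw[\ln Z(\G)\mid\cT_\ell]\ge\ln\Erw[Z(\G)\mid\cT_\ell]-o(n)$ on a high-probability event. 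On the other hand, the Gibbs variational inequality $\ln Z(\hat\G_\ell)\ge H(\nu)+\bck{\ln\psi_{\hat\G_\ell}}_\nu$ with the product test measure $\nu=\bigotimes_xp_{\ell,\partial^\ell[\hat\G_\ell,x]}$ yields $\ln Z(\hat\G_\ell)\ge n\cB_\thet(p)-o(n)$ w.h.p., after {\bf MA3} is used to correct the per-constraint terms. \Prop~\ref{Prop_plantedModel} then transfers this into $\ln\Erw[\vecone\{\cA_l\}Z(\G)]\ge n\cB_\thet(p)-o(n)$, which combined with the second-moment bound and concentration (\Prop~\ref{Lemma_conc}) closes the matching lower bound.

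The main obstacle is the entropy inequality $H(\mu_\G)\le\sum_x(1-d_x)H(\mu_{\G,\marg x})+\sum_aH(\mu_{\G,\marg\partial a})+o(n)$, sharper than plain subadditivity by the full Bethe correction. The $(\delta,k)$-state decomposition makes each conditional measure approximately product on bounded coordinate sets, so that the global entropy and every local marginal entropy decompose into matching local pieces. Controlling this uniformly over the specific (non-random) constraint neighbourhoods $\partial a$, rather than just over random $k$-tuples, is what forces combining \Prop~\ref{Prop_tensorise} with the high-girth condition to kill the rare constraints whose neighbourhood carries non-trivial correlations.
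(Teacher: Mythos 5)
Your proposal takes a genuinely different route from the paper. The paper never touches the quenched entropy $H(\mu_\G)$ at all: it works entirely in the annealed ensemble, proving via \Lem~\ref{Lemma_judicious} that the Gibbs measure concentrates on ``judicious'' configurations (those whose empirical statistics match $p$ given the depth-$\ell$ neighbourhood structure), then computing $\Erw[\vecone\{\text{judicious}\}Z(\G)\mid\cT_\ell]$ combinatorially (\Prop~\ref{Lemma_fmCalc}, \Cor~\ref{Cor_fmCalc}), and for the lower bound tensorising the model to obtain a second-moment bound and invoking Paley--Zygmund. Your proposal instead aims to bound the quenched entropy $H(\mu_\G)$ by the Bethe entropy and feed that into the identity $\ln Z=H(\mu_\G)+\sum_a\bck{\ln\psi_a}_{\mu_\G}$. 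That is a different strategy, and unfortunately it has serious gaps at both ends.

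For the upper bound, the entropy inequality $H(\mu_\G)\le\sum_x(1-d_x)H(\mu_{\G,\marg x})+\sum_aH(\mu_{\G,\marg\partial a})+o(n)$ is not a general inequality and your sketch does not establish it. Two of your supporting sub-claims are in fact false. First, $(\delta,k)$-symmetry for bounded $k$ does \emph{not} imply that the global entropy $H(\mu)$ factorises into $\sum_xH(\mu_{\marg x})+o(n)$: for $\mu$ uniform on a random linear code of rate $1/2$ one has $H(\mu)\approx\tfrac n2\ln2$ while $\sum_xH(\mu_{\marg x})\approx n\ln2$, yet $\mu$ is $(\eps,2)$-symmetric for $\eps=O(1/n)$. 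So the gap can be linear even with vanishing symmetry defect. Second, the claim that the local marginal entropies $H(\mu_{\G,\marg\partial a})$ ``decompose into sums of single-coordinate entropies'' is not something that $(\delta,k)$-symmetry can deliver, and indeed it must fail: the tuples $\partial a$ are the \emph{least} random $k$-tuples in the graph, since the constraint $a$ correlates its incident variables by construction. In the linear-code example $H(\mu_{\marg\partial a})=(d_a-1)\ln2$ while $\sum_jH(\mu_{\marg\partial(G,a,j)})=d_a\ln2$; the difference is precisely what makes the Bethe formula non-trivial. If the constraint marginals really did factorise, the Bethe entropy would collapse to the subadditivity bound $\sum_xH(\mu_{\marg x})$ and the constraint term in $\cB_\thet(p)$ would have to be the product value rather than the {\bf MA3}-maximum, which is wrong. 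High girth and \Prop~\ref{Prop_tensorise} do not fix this: the correlations on $\partial a$ are not confined to a few rare constraints, they are present at \emph{every} constraint and are exactly what the formula is designed to account for.

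The lower bound has a separate problem. Applying the Gibbs variational inequality with the product test measure $\nu=\bigotimes_xp_{\ell,\partial^\ell[\hat\G_\ell,x]}$ yields $H(\nu)+\bck{\ln\psi}_\nu$, whose per-constraint contribution is $\sum_jH(p_j)+\bck{\ln\psi}_{\bigotimes p}$. But the corresponding term of $\cB_\thet(p)$ is $H(q_T)+\bck{\ln\psi_T}_{q_T}$ with $q_T$ the {\bf MA3}-maximiser, and {\bf MA3} guarantees $H(q_T)+\bck{\ln\psi_T}_{q_T}\ge H(\bigotimes p)+\bck{\ln\psi}_{\bigotimes p}$. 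So the product test measure gives a lower bound that is generically \emph{strictly below} $\cB_\thet(p)$, and invoking {\bf MA3} pushes the gap the wrong way; it cannot ``correct'' a too-small lower bound. The natural test measure that reproduces the Bethe free energy is the non-product ``Bethe measure'' $\propto\prod_aq_{T_a}(\sigma_{\partial a})/\prod_xp_{T_x}(\sigma_x)^{d_x-1}$, but on a loopy graph this need not even be a probability measure, which is exactly why the paper circumvents the variational route and instead computes the conditional moment directly via \Cor~\ref{Cor_fmCalc_lower} and \Prop~\ref{Prop_plantedModel}.
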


Thus, the basic symmetry assumption (\ref{eqMyNonCondensation}) implies that $\cB_\thet(p)$ is an upper bound on $\frac1n\Erw[\ln Z(\G)]$.
If, additionally, the symmetry condition holds in the planted model, then this upper bound is tight.
In particular, in this case $\frac1n\Erw[\ln Z(\G)]$ is completely determined by the limiting local structure $\thet$ and $p$.

The proof of \Thm~\ref{Thm_symUpperBound}, which can be found in \Sec~\ref{Sec_symUpperBound}, is based on \Thm~\ref{Thm_decomp},
	the decomposition theorem for probability measures on cubes.
More precisely, we combine \Thm~\ref{Thm_decomp} with a conditional first and a second moment argument given the local structure of the factor graph,
i.e., given $\cT_\ell$ for a large $\ell$.
The fact that it is necessary to condition on the local structure in order to cope with ``lottery effects'' has been noticed in prior work~\cite{yuval,kSAT,DM,DMS}.
Most prominently, such a conditioning was crucial in order to obtain the precise $k$-SAT threshold for large enough $k$~\cite{DSS3}.
But here the key insight is that \Thm~\ref{Thm_decomp}  enables us to carry out conditional moment calculations in a fairly elegant and generic way.
	
The obvious question that arises from \Thm~\ref{Thm_symUpperBound} is whether there is a simple way to show that
$\underline\cM$ is $p$-symmetric (and that the same is true of the planted distribution).
In Sections~\ref{Sec_nonRe} and~\ref{Sec_GU} we provide two sufficient conditions called non-reconstruction and Gibbs uniqueness.
That these two conditions entail symmetry was predicted in~\cite{pnas}, and \Thm~\ref{Thm_decomp} enables us to prove it.

\subsection{Non-reconstruction}\label{Sec_nonRe}
Following~\cite{pnas} we define a correlation decay condition, the ``non-reconstruction'' condition, on factor graphs and show that it implies symmetry.
The basic idea is to formalise the following.
Given $\eps>0$ pick a large $\ell=\ell(\eps)>1$, choose a random factor graph $\G$ for some large $n$ and  pick a variable node $x$ uniformly at random.
Further, sample an assignment $\SIGMA$ randomly from the Gibbs measure $\mu_{\G}$.
Now, sample a second assignment $\TAU$ from $\mu_{\G}$ subject to the condition that $\TAU(y)=\SIGMA(y)$
for all variable nodes $y$ at distance at least $\ell$ from $x$.
Then non-reconstruction condition asks whether the distribution of $\TAU(x)$ is markedly different from the unconditional marginal $\mu_{\G\marg x}$.
More precisely, non-reconstruction occurs if for any $\eps$ there is $\ell(\eps)$ such that
with high probability $\G$ is such that the shift that a random ``bounary condition'' $\SIGMA$
induces does not exceed $\eps$ in total variation distance.

Of course, instead of conditioning on the values of {\em all} variables at distance at least $\ell$ from $x$,
we might as well just condition on the variables at distance either $\ell$  or $\ell+1$ from $x$, depending on the parity of $\ell$.
This is immediate from the definition (\ref{eqZ}) of the Gibbs measure.

As for the formal definition,  suppose that $G\in\cG(\cM_n)$ is a  factor graph, let $x\in V_n$ and let $\ell\geq1$.
Let $\nabla_\ell(G,x)$ signify the $\sigma$-algebra on $\Omega^n$ generated by the events
	$\vecone\{\SIGMA(y)=\omega\}$ for $\omega\in\Omega$ and $y\in V_n$ at distance either $\ell$ or $\ell+1$ from $x$.
Thus, $\nabla_\ell(G,x)$ pins down all $\SIGMA(y)$ for $y$ at distance $\ell$ from $x$ if $\ell$ is even and $\ell+1$ otherwise.
Then we say that $\underline\cM$ has \bemph{non-reconstruction} with respect to a marginal assignment $p$ if
for any $\eps>0$ there is $\ell>0$ such that
	\begin{align*}
	\lim_{n\to\infty}\pr\brk{\frac1n\sum_{x\in V_n}\bck{\TV{\bck{\TAU[\nix|x]\big|\nabla_\ell(\G,x)}_{\G}-p_{\ell,\partial^\ell[\G,x]}}}_{\G}>\eps}=0.
	\end{align*}
To parse the above, the outer $\pr\brk\nix$ refers to the choice of $\G$.
The big $\bck{\nix}_{\G}$ is the choice of the boundary condition called $\SIGMA$ above.
Finally, $\bck{\nix|\nabla_\ell(\G,x)}_{\G}$ is the random choice given the boundary condition.

Analogously, \bemph{the planted distribution of $\underline\cM$ has non-reconstruction} with respect to $p$ if for any $\eps>0$ there exists $\ell>0$ such that
	\begin{align*}
	\lim_{n\to\infty}\pr\brk{\frac1n\sum_{x\in V_n}\bck{\TV{\bck{\SIGMA[\nix|x]\big|\nabla_\ell(\hat\G_\ell,x)}_{\hat\G_\ell}-
		p_{\ell,\partial^\ell[\hat\G_\ell,x]}}}_{\hat\G_\ell}>\eps}&=0.
	\end{align*}

\begin{theorem}\label{Thm_nonReconstruction}
If $\underline\cM$ has non-reconstruction with respect to $p$, then $\underline\cM$ is $p$-symmetric.
If the planted distribution of $\underline\cM$ has non-reconstruction with respect to $p$, then it is $p$-symmetric.
\end{theorem}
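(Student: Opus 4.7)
The plan is to derive the pair-factorisation demanded by $p$-symmetry directly from the non-reconstruction hypothesis via the Markov property of the Gibbs measure on locally tree-like factor graphs. Fix $\eps>0$ and invoke non-reconstruction to choose $\ell$ so that, with probability $1-o(1)$ over $\G$,
\begin{equation*}
\frac{1}{n}\sum_{x\in V_n}\bck{\TV{\bck{\TAU[\nix|x]\mid\nabla_\ell(\G,x)}_\G-p_{\ell,\partial^\ell[\G,x]}}}_\G<\eps^3/|\Omega|.
\end{equation*}
Since $\thet$ is concentrated on $\fT$ and $\underline\cM$ has high girth, local weak convergence guarantees that a $(1-o(1))$-fraction of variables $x\in V_n$ have an acyclic depth-$(\ell+1)$ neighbourhood; since $|B_{\ell+1}(x)|$ is bounded in terms of $\Delta$ and $\ell$ only, this forces a $(1-o(1))$-fraction of the pairs $(x,y)\in V_n^2$ to satisfy additionally $y\notin B_{\ell+1}(x)$.

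For such a pair, the variable set underlying $\nabla_\ell(\G,x)$ is a vertex separator between $x$ and $y$ in the factor graph (the outermost variable shell of $B_{\ell+1}(x)$ cuts the interior from the exterior in a tree). The Markov property of the Gibbs measure then gives the conditional independence of $\TAU(x)$ and $\TAU(y)$ given $\nabla_\ell(\G,x)$, and an application of the tower rule yields
\begin{equation*}
\mu_{\G\marg\{x,y\}}(\omega_x,\omega_y)=\bck{\bck{\vecone\{\TAU(x)=\omega_x\}\mid\nabla_\ell(\G,x)}_\G\cdot\bck{\vecone\{\TAU(y)=\omega_y\}\mid\nabla_\ell(\G,x)}_\G}_\G.
\end{equation*}
Replacing the first inner expectation by $p_{\ell,\partial^\ell[\G,x]}(\omega_x)$ and tracking the total-variation error via the non-reconstruction inequality, summing over $\omega_x,\omega_y$ produces
\begin{equation*}
\TV{\mu_{\G\marg\{x,y\}}-p_{\ell,\partial^\ell[\G,x]}\tensor\mu_{\G\marg y}}\leq 2|\Omega|\bck{\TV{\bck{\TAU[\nix|x]\mid\nabla_\ell(\G,x)}_\G-p_{\ell,\partial^\ell[\G,x]}}}_\G.
\end{equation*}
A triangle-inequality step then swaps $\mu_{\G\marg y}$ for $p_{\ell,\partial^\ell[\G,y]}$: by Jensen's inequality, $\TV{\mu_{\G\marg y}-p_{\ell,\partial^\ell[\G,y]}}$ is bounded by the analogous conditional-expectation quantity at $y$, which is small on $\G$-average by non-reconstruction at $y$. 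Averaging the combined bound over $(x,y)\in V_n^2$, and absorbing the $o(1)$-fraction of pairs excluded above (each contributing a bounded amount), yields the $p$-symmetry bound at the chosen $\ell$. The planted case runs identically, drawing on the high-girth and non-reconstruction hypotheses for $\hat\G_\ell$ instead.

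The main obstacle I foresee is rigorously handling the Markov property on a factor graph that is not globally acyclic: one must carefully quantify both the probability of local tree-likeness (where local weak convergence to $\thet\in\cP(\fT)$ and the high-girth property do the work) and the total contribution of the $O(1/n)$-fraction of pairs whose local structure fails, ensuring these do not inflate the averaged bound. A secondary subtlety is the uniformity in $\ell\geq\ell_0$ required by the $p$-symmetry definition, which I would handle by combining non-reconstruction at a single large $\ell_0$ with the martingale convergence of $(p_\ell)_\ell$ from Lemma~\ref{Lemma_martingale}, propagating the marginal approximation $\mu_{\G\marg x}\approx p_{\ell,\partial^\ell[\G,x]}$ and hence the full pair-factorisation to all $\ell\geq\ell_0$.
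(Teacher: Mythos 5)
Your proposal is correct, and it takes a genuinely different and more elementary route than the paper. The paper reduces \Thm~\ref{Thm_nonReconstruction} to \Lem~\ref{Lemma_nonRe}, whose proof relies on the regularity lemma \Thm~\ref{Thm_decomp}: it produces a homogeneous partition of $\Omega^n$ into states and then runs a contradiction argument (pigeonhole to extract a well-separated set $L$ of variables with shifted conditional marginals, then a Chernoff bound exploiting the conditional independence of the values at $L$ given their boundaries, then Markov's inequality) to conclude that non-reconstruction forces every substantial state to have marginals tracking $p$; this yields $(\eps,2)$-symmetry and marginal accuracy, and the pair-factorisation follows by a final triangle inequality. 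You bypass \Thm~\ref{Thm_decomp} entirely by exploiting the spatial Markov property head-on: conditioning on $\nabla_\ell(\G,x)$ makes $\TAU(x)$ and $\TAU(y)$ independent for all $y$ with $\dist(x,y)>\ell+1$, hence $\mu_{\G\marg\{x,y\}}(\omega_x,\omega_y)=\bck{\bck{\vecone\{\TAU(x)=\omega_x\}|\nabla_\ell}_\G\bck{\vecone\{\TAU(y)=\omega_y\}|\nabla_\ell}_\G}_\G$, and the non-reconstruction bound plus Jensen controls the deviation of each factor from $p_{\ell,\cdot}$. This is substantially shorter and cleaner; the paper's detour buys the auxiliary statement, recorded in \Lem~\ref{Lemma_nonRe}, that non-reconstruction by itself implies $(\eps,2)$-symmetry of $\mu_{\G}$.

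Two refinements. First, the local tree-likeness you invoke is unnecessary for the Markov step: the set of variables at distance $\ell$ or $\ell+1$ from $x$ is a vertex separator between $x$ and every $y$ with $\dist(x,y)>\ell+1$ purely by bipartiteness, since any walk from $x$ to such a $y$ must visit a node at distance exactly $\ell$, and a node at even (resp.\ odd) distance from the variable $x$ is a variable (resp.\ a constraint). So no constraint has a neighbour strictly inside and a neighbour strictly outside, and the conditional independence holds deterministically for every factor graph; only bounded degree is needed to make the $O(\Delta^{\ell+1})$ excluded $y$'s per $x$ a vanishing fraction of pairs. Second, the $2|\Omega|$ factor in your TV bound can be dropped: since $\sum_{\omega_y}\bck{\vecone\{\TAU(y)=\omega_y\}|\nabla_\ell}_\G=1$, one gets $\TV{\mu_{\G\marg\{x,y\}}-p_{\ell,\partial^\ell[\G,x]}\tensor\mu_{\G\marg y}}\leq\bck{\TV{\bck{\TAU[\nix|x]|\nabla_\ell(\G,x)}_\G-p_{\ell,\partial^\ell[\G,x]}}}_\G$ directly. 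Your remark on propagating the factorisation from the single depth supplied by non-reconstruction to all $\ell\geq\ell_0$ via \Lem~\ref{Lemma_martingale} is the right move; this step is equally implicit in the paper's own proof of \Thm~\ref{Thm_nonReconstruction}.
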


In concrete applications the non-reconstruction condition is typically reasonably easy to verify.
For instance, in~\cite{clusters} we determine the precise location of the so-called ``condensation phase transition''
in the regular $k$-SAT model via \Thm s~\ref{Thm_symUpperBound} and~\ref{Thm_nonReconstruction}.
The proof of \Thm~\ref{Thm_nonReconstruction} can be found in \Sec~\ref{Sec_Thm_nonReconstruction}.

\subsection{Gibbs uniqueness}\label{Sec_GU}
Although the non-reconstruction condition is reasonably handy,  to verify it we still need to ``touch'' the complex random graph $\G$.
Ideally, we might hope for a condition that can be stated solely in terms of the limiting distribution $\thet$ on trees,
	which is conceptually far more accessible.
The ``Gibbs uniqueness'' condition as put forward in~\cite{pnas} fills this order.

Specifically, 
suppose that $T$ is a finite acyclic template  
whose root $r_T$ is a variable node.
Then we say that $T$ is \bemph{$(\eps,\ell)$-unique} with respect to a marginal assignment $p$ if 
	\begin{equation}\label{eqGibbsUniquenessCondition}
	\TV{\bck{\SIGMA[\nix|r_T]\big|\nabla_\ell T}_{T}-p_{T}}<\eps.
	\end{equation}
To parse (\ref{eqGibbsUniquenessCondition}), we observe that $\bck{\SIGMA[\nix|r_T]\big|\nabla_\ell T}_{T}$ is a random variable,
namely the average of the value $\SIGMA[\nix|r_T]$ assigned to the root variable under the Gibbs measure $\mu_T$ given the values of the variables at
distance at least $\ell$ from $r_T$.
Hence, (\ref{eqGibbsUniquenessCondition}) requires that $\bck{\SIGMA[\nix|r_T]\big|\nabla_\ell T}_{T}$ is at total variation distance less than $\eps$
for {\em every} possible assignment of the variables at distance at least $\ell$ from $r_T$, i.e., for every ``boundary condition''.

More generally, we say that $T\in\fT\cap\cV$ is $(\eps,\ell)$-unique with respect to $p$ if the finite template $\partial^{\ell+1}T$ has this property.
(That $\partial^{\ell+1}T$ is finite follows once more from the fact that all degrees are bounded by $\Delta$.)
Further, we call the measure
$\thet\in\cP(\fT)$ {\bem Gibbs-unique} with respect to $p$ if for any $\eps>0$ we have
	$$\lim_{\ell\to\infty}\pr\brk{\T\mbox{ is $(\eps,\ell)$-unique w.r.t.\ }p}=1.$$

\begin{corollary}\label{Thm_smm}
If $\thet\in\cP(\fT)$ is Gibbs-unique with respect to $p$, then
 $\lim_{n\to\infty}\frac1n\Erw[\ln Z(\G)]=\cB_\thet(p)$.
\end{corollary}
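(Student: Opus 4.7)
The strategy is to chain together \Thm~\ref{Thm_nonReconstruction} and \Thm~\ref{Thm_symUpperBound}: Gibbs uniqueness of $\thet$ is a worst-case (over boundary conditions) property of the random \emph{tree}, while non-reconstruction is an average-case (over boundaries drawn from the Gibbs measure) property of the random \emph{factor graph}. I will show that Gibbs uniqueness of $\thet$ implies both that $\underline\cM$ has non-reconstruction with respect to $p$ and that the planted distribution of $\underline\cM$ has non-reconstruction with respect to $p$. Granted this, \Thm~\ref{Thm_nonReconstruction} yields $p$-symmetry for both $\underline\cM$ and the planted distribution, and then \Thm~\ref{Thm_symUpperBound} gives the asserted formula.

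The core of the argument is the passage from tree to graph. Fix $\eps>0$; by Gibbs uniqueness choose $\ell$ such that with $\thet$-probability $\geq 1-\eps$ the random tree $\T_\thet$ is $(\eps,\ell)$-unique with respect to $p$. Set $l=2(\ell+2)$. Since all degrees are bounded by $\Delta$, the expected number of cycles of length $\leq l$ through a uniformly random variable node of $\G$ is $O(1)$, so for a $(1-o(1))$-fraction of variables $x\in V_n$ the depth-$(\ell+1)$ ball around $x$ in $\G$ is acyclic. By the local weak convergence (\ref{eqLocalWeakConvergence3}) applied at depth $\ell+1$ combined with the high girth condition (\ref{eqShortCycles}), with high probability over $\G$ at least a $(1-2\eps)$-fraction of variables $x$ simultaneously have (i) an acyclic $(\ell{+}1)$-ball and (ii) $\partial^{\ell+1}[\G,x]$ isomorphic to $\partial^{\ell+1}T$ for some $(\eps,\ell)$-unique tree $T$.

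For any such $x$, the following standard factorisation holds. Since the $(\ell{+}1)$-ball $B$ around $x$ in $\G$ is a tree, the weight function $\psi_\G$ splits as $\psi_\G(\sigma)=\psi_B(\sigma_B)\cdot\psi_{\G\setminus B}(\sigma_{V\setminus B})$ up to constraints that straddle the boundary, and those straddling constraints are absorbed into $\psi_B$. Consequently, for any assignment $\tau$ of the boundary variables (those at distance $\ell$ or $\ell+1$ from $x$), the conditional distribution $\bck{\TAU[\nix|x]\,\vert\,\nabla_\ell(\G,x)=\tau}_\G$ equals the Gibbs marginal at the root of the finite template $\partial^{\ell+1}[\G,x]$ under the boundary condition $\tau$. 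By the $(\eps,\ell)$-uniqueness of this template, this marginal lies within total variation $\eps$ of $p_{\partial^{\ell+1}[\G,x]}$, \emph{uniformly in $\tau$}. Moreover, \Lem~\ref{Lemma_martingale} together with Gibbs uniqueness implies $\TV{p_{\partial^{\ell+1}[\G,x]}-p_{\ell,\partial^\ell[\G,x]}}<\eps$ on this event (the Gibbs marginal of the root, evaluated at the ``free'' boundary condition, identifies $p_T$ up to $\eps$ with a quantity measurable with respect to $\partial^\ell T$). Averaging the resulting pointwise bound over the boundary condition drawn from $\mu_\G$ and over $x$, and using that the exceptional fraction of $x$ is $o(1)$, yields the non-reconstruction condition for $\underline\cM$.

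For the planted distribution, the argument is identical once we observe that (\ref{eqPlantedDistribution}) preserves the law of the depth-$(\ell+1)$ neighborhoods: for any event $\cE$ measurable with respect to $\cT_{\ell+1}$, $\pr[\hat\G_{\ell+1}\in\cE]=\pr[\G\in\cE]$. Combined with the high-girth hypothesis (\ref{eqShortCycles}) for $\hat\G_{\ell+1}$, the same local-to-global passage shows non-reconstruction of the planted distribution with respect to $p$. The main obstacle I foresee is the careful accounting in the third paragraph: one must argue that $p_{\partial^{\ell+1}[\G,x]}$ is close to the $\cT_\ell$-measurable quantity $p_{\ell,\partial^\ell[\G,x]}$ against which non-reconstruction is stated, and this requires reading Gibbs uniqueness as a statement that pins down $p_T$ from the finite local data. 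Once this is in hand, the two theorems cited complete the proof.
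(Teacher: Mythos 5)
Your proposal is correct and follows essentially the same route as the paper: deduce non-reconstruction of both $\underline\cM$ and its planted distribution from Gibbs uniqueness via local weak convergence and the spatial Markov property of the Gibbs measure on a locally tree-like neighborhood, then combine \Thm~\ref{Thm_nonReconstruction} with \Thm~\ref{Thm_symUpperBound}. Your version is more explicit than the paper's one-paragraph argument and usefully flags the need to relate $p_{\partial^{\ell+1}[\G,x]}$ to the $\cT_\ell$-measurable quantity $p_{\ell,\partial^\ell[\G,x]}$, a point the paper glosses over.
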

\begin{proof}
If $\thet$ is Gibbs-unique with respect to $p$, then (\ref{eqLocalWeakConvergence3}) guarantees that $\underline\cM$ has
non-reconstruction with respect to $p$. Indeed, given $\eps>0, \ell >0$ and a graph $G$ let $\mathcal{E}(G,\eps,\ell)$ denote the set of vertices $x \in V_n$ for which $\partial^\ell[G,x]$ is acyclic and $(\eps,\ell)$ unique. Then we have
\begin{align*} \frac1n\sum_{x\in V_n}\bck{\TV{\bck{\SIGMA[\nix|x]\big|\nabla_\ell(\G,x)}_{\G}-p_{\ell,\partial^\ell[\G,x]}}}_{\G} &\leq  \frac1n\sum_{x\in V_n} \left \| \TV{\bck{\SIGMA[\nix|x]\big|\nabla_\ell(\G,x)}_{\G}-p_{\ell,\partial^\ell[\G,x]}} \right\|_\infty 
\\ & \leq  \eps + \left( 1 - \frac{|\mathcal{E}(\vec G,\eps,\ell)|}{n} \right) , \end{align*}
and by (\ref{eqLocalWeakConvergence3}) $\pr \brk{|\mathcal{E}(\vec G,\eps,\ell)| \leq (1-\eps)n}$ tends to $0$ as $n \to \infty$.
Similarly, because the distribution of the depth-$\ell$ neighborhood structure in the planted distribution $\hat\G_\ell$ coincides
with $\thet_\ell$,  Gibbs-uniqueness implies that the planted model has non-reconstruction with respect to $p$ as well.
Therefore, the assertion follows from
\Thm s~\ref{Thm_symUpperBound} and~\ref{Thm_nonReconstruction}.
\end{proof}

In problems such as the random $k$-SAT model, the Ising model or the Potts antiferromagnet that come with an ``inverse temperature'' parameter $\beta\geq0$,
Gibbs uniqueness is always satisfied for sufficiently small values of $\beta$.
Consequently, \Cor~\ref{Thm_smm} shows that the cavity method always yields the correct value of  $\lim_{n\to\infty}\frac1n\Erw[\ln Z(\G)]$
in the case of small $\beta$, the so-called ``high temperature'' case in physics jargon.
Furthermore, if the Gibbs uniqueness condition is satisfied then there is a canonical way of constructing the marginal assignment $p$
by means of the Belief Propagation algorithm~\cite[\Chap~14]{MM}.
Hence, \Cor~\ref{Thm_smm} provides a full comprehensive answer in this case.

\subsection{Meet the expectation}\label{Sec_expectations}
We proceed to prove \Thm s~\ref{Thm_symUpperBound}. 
To this end, we need to get a handle on the conditional expectation of $Z$ given $\cT_\ell$
and for this purpose we need to study the possible empirical distributions of the values assigned to the variables
of a concrete factor graph $G\in\cG(\cM_n)$.
Specifically, by a {\bem $(G,\ell)$-marginal sequence} we mean a map $q:\fT_\ell\to\bigcup_{j=1}^\Delta\cP(\Omega^j)$, $T\mapsto q_T$ such that
	\begin{description}
	\item[MS1] $q_T\in\cP(\Omega)$ if $T\in\cV\cap\fT_\ell$,
	\item[MS2] $q_T\in\cP(\Omega^{d_T})$ if $T\in\cF\cap\fT_\ell$,
	\item[MS3] for all $T\in\fT_\ell\cap\cV$ we have
		\begin{align}\label{eqMargsWorkOut}
		\sum_{T'\in\fT_\ell\cap\cF}\sum_{j\in[d_{T'}]}\lambda_{G,\ell}(T')\vecone\{\partial^\ell[T'\reroot j]=T\}(q_{T'\marg j}-q_T)&=0.
		\end{align}
	\end{description}
Thus, $q$ assigns each tree $T\in\fT_\ell$ rooted at a variable node a distribution on $\Omega$ and each tree $T\in\fT_\ell$ rooted at a constraint node
a distribution on $\Omega^{d_T}$, just like in \Def~\ref{Def_margAssign}.
Furthermore, the consistency condition (\ref{eqMargsWorkOut}) provides that for a given $T$ rooted at a variable the average
marginal distribution over all $T',j$ such that $\partial^\ell[T'\reroot j]=T$ is equal to $q_T$.
However, in contrast to condition {\bf MA2} from \Def~\ref{Def_margAssign} {\bf MS3} does not require
this marginalisation to work out for every $T',j$ individually.

Suppose now that $U\subset F_n$ is a set of constraint nodes such that $d(a)=d_0$ for all $a\in U$.
Then for $\sigma:V_n\to\Omega$ we let
	\begin{align*}
	\sigma[(\omega_1,\ldots,\omega_{d_0})|U]&=\frac1{|U|}\sum_{a\in U}\prod_{j=1}^{d_0}\vecone\{\sigma(\partial(G,a,j))=\omega_j)\}.
	\end{align*}
Thus, $\sigma[\nix|U]\in\cP(\Omega^{d_0})$ is the empirical distribution of the sequences
	$\{(\sigma(\partial(G,a,1)),\ldots,\sigma(\partial(G,a,d_0))):a\in U\}$.
A factor graph $G$ and $\sigma:V_n\to\Omega$ induce a $(G,\ell)$-marginal sequence $q_{G,\sigma,\ell}$ canonically, namely the empirical distributions
	\begin{align*}
	q_{G,\sigma,\ell,T}&={\sigma[\nix|\{x\in V_n:\partial^\ell[G,x]=T]}&&\mbox{for }T\in\cT_\ell\cap\cV,\\
	q_{G,\sigma,\ell,T}&={\sigma[\nix|\{a\in F_n:\partial^{\ell+1}[G,a]=T\}]}&&\mbox{for }T\in\cT_\ell\cap\cF.
	\end{align*}
Conversely, given a $(G,\ell)$-marginal sequence $q$ let $\Sigma(G,\ell,q,\delta)$ be the set of all $\sigma:V_n\to\Omega$ such that
for all $T\in\fT_\ell\cap\cV$, $T'\in\fT_\ell\cap\cF$ we have
	\begin{align}\label{eqZellqdelta}
	\TV{q_{G,\sigma,\ell,T}-q_T}&\leq \delta,&
	\TV{q_{G,\sigma,\ell,T'}-q_{T'}}&\leq \delta.
	\end{align}
Moreover, let
	\begin{align*}
	Z_{\ell,q,\delta}(G)&=Z(G)\bck{\vecone\{\SIGMA\in\Sigma(G,\ell,q,\delta)\}}_G.
	\end{align*}
Finally, define
	\begin{align*}
	\cB_{G,\ell}(q)&=\sum_{T\in\fT_\ell\cap\cV}(1-d_T)H(q_T)\lambda_{G,\ell}(T|\cV)
		+\frac{|F_n|}{|V_n|}\sum_{T\in\cT_\ell\cap\cF}
			\brk{H(q_T)+\bck{\ln\psi_T(\SIGMA)}_{q_T}
			-\KL{q_{T}}{\bigotimes_{j\in[d_T]}q_{\partial^\ell[T\reroot j]}}}\lambda_{G,\ell}(T|\cF).
	\end{align*}

\noindent
In \Sec~\ref{Sec_crazyConfigs} we are going to prove
the following formula for the expectation of $Z_{\ell,q,\delta}(G)$.

\begin{proposition}\label{Lemma_fmCalc}
For any $\eps>0$, $\ell>0$ there is $\delta>0$ such that for large enough $n$ the following is true.
Assume that $G\in\cG(\cM_n)$ is $100\ell$-acyclic and let $q$ be a $(G,\ell)$-marginal sequence.
Then
	\begin{align*}
	\abs{n^{-1}\ln\Erw[\vecone\{\cA_{2\ell+5}\}Z_{\ell,q,\delta}(\G)|\G\ism_\ell G]-\cB_{G,\ell}(q)}&<\eps.
	\end{align*}
\end{proposition}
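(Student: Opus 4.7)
The plan is a careful first-moment calculation via the configuration model. My starting point is to stratify the sum defining $Z_{\ell,q,\delta}(\G)$ by the ``type--value profile'' of the pair $(\sigma,\G)$: for each $T\in\fT_\ell\cap\cV$ and $\omega\in\Omega$, record $n_{T,\omega}=|\{x:\partial^\ell[G,x]=T,\ \sigma(x)=\omega\}|$, and for each $T\in\fT_\ell\cap\cF$ and $\omega\in\Omega^{d_T}$, record $m_{T,\omega}=|\{a:\partial^{\ell+1}[\G,a]=T,\ (\sigma(\partial(\G,a,j)))_j=\omega\}|$. The variable-side profile depends on $\G$ only through $(\partial^\ell[\G,x])_{x\in V_n}$, which is frozen by $\ism_\ell$, whereas the constraint-side profile depends on the actual matching. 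Membership in $\Sigma(\G,\ell,q,\delta)$ forces both profiles to be $\delta$-close to $q$, so linearity of expectation gives
\[
\Erw\brk{\vecone\{\cA_{2\ell+5}\}Z_{\ell,q,\delta}(\G)\mid\G\ism_\ell G}
=\sum_{\sigma}\Erw\brk{\vecone\{\cA_{2\ell+5}\}\vecone\{\sigma\in\Sigma(\G,\ell,q,\delta)\}\psi_\G(\sigma)\mid\G\ism_\ell G}.
\]

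For each admissible pair of profiles three factors appear. First, the count of $\sigma$ with the given variable profile is a product of multinomials $\prod_{T\in\cV}\binom{N_T}{(n_{T,\omega})_\omega}$ which by Stirling equals $\exp\bc{n\sum_{T\in\cV}\lambda_{G,\ell}(T\mid\cV)H(q_T)+o(n)}$. Second, on any realisation of the constraint-side profile the weight $\psi_\G(\sigma)=\prod_a\psi_a(\sigma_a^\G)$ collapses to $\prod_T\prod_\omega\psi_T(\omega)^{m_{T,\omega}}$, contributing $\exp\bc{|F_n|\sum_{T\in\cF}\lambda_{G,\ell}(T\mid\cF)\bck{\ln\psi_T(\SIGMA)}_{q_T}+o(n)}$. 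Third, and crucially, one needs the conditional probability that the random matching, given $\G\ism_\ell G$ and $\cA_{2\ell+5}$, realises the prescribed constraint-side profile.

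The crux is this third factor. Because $G$ is $100\ell$-acyclic and we restrict to $\cA_{2\ell+5}$, the portion of $\G$ left random after the $\ism_\ell$-conditioning is, up to boundary effects of subexponential order, a uniform pairing of the boundary clones, whose types are fixed by the templates $\partial^{\ell+1}[G,a]$. A Sanov-style large-deviation estimate in this residual configuration model then shows that, once $\sigma$'s variable marginals at each neighbour class are pinned at $q_{\partial^\ell[T\reroot j]}$, the probability of achieving joint distribution $q_T$ at the constraints of type $T$ equals $\exp\bc{-M_T\KL{q_T}{\bigotimes_j q_{\partial^\ell[T\reroot j]}}+o(n)}$. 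Multiplying the three factors, dividing by $n$, and collecting terms via the degree-sum identity $\sum_xd(x)=\sum_ad(a)$ to fold variable-clone entropy into the coefficient $(1-d_T)$ at variable nodes yields $\cB_{G,\ell}(q)$ up to an error that vanishes as $\delta\to0$ and $n\to\infty$.

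The main technical obstacle is executing the third step rigorously under the double hard conditioning on $\G\ism_\ell G$ (which freezes all local iso classes and so imposes a nontrivial hard constraint on the matching) and on $\cA_{2\ell+5}$ (which truncates the ensemble), together with the soft profile constraint. One must verify that the residual matching on the boundary clones genuinely behaves like a uniform configuration-model pairing, that the acyclicity event is asymptotically independent of the profile event, and that the Sanov estimate holds uniformly across all type classes and uniformly in $\delta$. Disentangling these three layers of constraints---hard local structure, hard acyclicity, soft profile---and tracking the error terms is the bulk of the work and is deferred to \Sec~\ref{Sec_crazyConfigs}.
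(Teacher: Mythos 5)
You correctly identify the shape of the argument: stratify by empirical type--value profiles, count $\sigma$'s with a multinomial, estimate the constraint-side matching probability by a large-deviations argument, and multiply by the weight factor; and the entropy/Kullback--Leibler/weight split you write down does reproduce $\cB_{G,\ell}(q)$ (your single term $\KL{q_T}{\bigotimes_j q_{\partial^\ell[T\reroot j]}}$ matches the paper's two-step split, a marginal-consistency cost plus a joint-dependence cost, via the chain rule for relative entropy). However, the argument stops at exactly the point where the substance lies. The central assertion --- that ``the portion of $\G$ left random after the $\ism_\ell$-conditioning is, up to boundary effects of subexponential order, a uniform pairing of the boundary clones'' --- is the entire crux, and it is not at all obvious: conditioning on $\G\ism_\ell G$ is a global hard constraint on the matching that has no a priori reason to factor into ``frozen local structure plus random boundary pairing.'' You acknowledge that disentangling this ``is the bulk of the work'' and defer it; but that deferral is precisely the proposition at hand.

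The mechanism the paper actually uses is a Bordenave--Caputo-style generalised configuration model, $\cM(G,\ell)$, in which each clone's \emph{type} encodes its full depth-$\ell$ isomorphism class $(\partial^\ell[G,x],i)$, and a further enhancement $\cM(G,\hat\sigma,\ell)$ encodes the assignment value into the type as well. This turns the awkward conditional distribution into an honest configuration model. The three ingredients you wave at are then honest lemmas: \Lem~\ref{Lemma_beMyType} shows that a sufficiently acyclic $G'\in\cG(\cM(G,\ell))$ automatically satisfies $G'\ism_\ell G$ (so the hard local constraint is absorbed by the type set, modulo acyclicity); \Lem~\ref{Lemma_getBack} is a switching argument showing a typical draw from $\cM(G,\hat\sigma,\ell)$ is within distance $n^{0.9}$ of a $4\ell$-acyclic graph (so the acyclicity truncation only perturbs counts by $\exp(n^{0.91})$); and \Lem~\ref{Lemma_countingConfigurations} executes the Stirling/multinomial computation. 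Your proposal describes the answer correctly, and the Sanov exponent is right, but it is an outline rather than a proof: without the generalised configuration model (or an equivalent device) the ``residual matching'' has no precise meaning and the error terms from the triple conditioning cannot be controlled.
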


We are going to be particularly interested in the expectation of $Z_{\ell,q,\delta}(\G)$ for $q$ ``close'' to a specific marginal assignment $p$.
Formally, a $(G,\ell)$-marginal sequence $q$ is {\bem $(\eps,\ell)$-judicious} with respect to $p$ if
	\begin{align*}
	\sum_{T\in\fT_\ell\cap\cV}\lambda_{G,\ell}[T|\cV]\TV{q_T-p_T}+
	\sum_{T\in\fT_\ell\cap\cF}\sum_{j\in[d_T]}\lambda_{G,\ell}[T|\cF]\TV{q_{T\marg j}-p_{\ell,\partial^\ell[T\reroot j]}}&<\eps.
	\end{align*}
We say that $(G,\sigma)$ is {\bem $(\eps,\ell)$-judicious} with respect to $p$ if the empirical distribution $q_{G,\sigma,\ell}$ is $(\eps,\ell)$-judicious w.r.t.\ $p$.

\begin{corollary}\label{Cor_fmCalc}
For any $\alpha>0$ there exist $\eps>0,\ell>0$ such that for all $0<\beta,\gamma<\eps$ and all $l\geq\ell$ the following is true.
Let $\cL(\gamma,l)$ be the event that $\TV{\lambda_{\G,l}-\thet_l}<\gamma$.
Then
	\begin{align*}
	\limsup_{n\to\infty}\frac1n\ln\Erw\brk{\vecone\{\G\in\cL(\gamma,l)\cap\cA_{100l}\}Z(\G)\bck{(\G,\SIGMA)\mbox{ is $(\beta,l)$-judicious w.r.t.\ $p$}}_{\G}}
		&\leq\cB_\thet(p)+\alpha.
	\end{align*}
\end{corollary}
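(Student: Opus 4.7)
The plan is to reduce the expectation to a conditional first-moment estimate at fixed empirical marginal profiles via Proposition~\ref{Lemma_fmCalc}, and then bound the resulting functional $\cB_{G,l}(q)$ against $\cB_\thet(p)$ using conditions \textbf{MA2} and \textbf{MA3} of the marginal assignment. I fix parameters in the order $\alpha\gg\eps\gg\ell^{-1}$ and a $\delta>0$ sufficiently small in terms of these and $\Omega,\Delta$; I also work with the convention $p_T=p_{l,T}$ for truncated variable-rooted trees $T\in\fT_l\cap\cV$, consistent with extending the marginal assignment to $\fT_l\cap\cV$ by conditional expectation.

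Since $\fT_l$ is finite and depends only on $\Omega,\Theta,\Psi,\Delta,l$, the set of $(G,l)$-marginal sequences carries a $\delta$-net $\cQ(\delta,l)$ of cardinality independent of $n$. Every $\sigma$ for which $(\G,\sigma)$ is $(\beta,l)$-judicious has its empirical profile $q_{\G,\sigma,l}$ within $\delta$ of some $q\in\cQ(\delta,l)$ in the pointwise-$\TV$ sense, so $\sigma\in\Sigma(\G,l,q,\delta)$; hence
\begin{align*}
\Erw\brk{\vecone\{\G\in\cL(\gamma,l)\cap\cA_{100l}\}Z(\G)\bck{(\G,\SIGMA)\ \mathrm{judicious}}_\G}\leq\sum_{q\in\cQ(\delta,l)}\Erw\brk{\vecone\{\cL(\gamma,l)\cap\cA_{100l}\}Z_{l,q,\delta}(\G)}.
\end{align*}
For each $q$, I condition on $\cT_l$. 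On the event $\cL(\gamma,l)\cap\cA_{100l}$ the graph $\G$ itself is $100l$-acyclic and represents its $\ism_l$-class, while only classes admitting a $100l$-acyclic representative contribute to $\vecone\{\cA_{100l}\}$. Proposition~\ref{Lemma_fmCalc} therefore applies to the class representative, which, combined with $\cA_{100l}\subseteq\cA_{2l+5}$ and the dependence of $\cB_{\nix,l}(q)$ on $G_0$ only through its $\ism_l$-class, yields
\begin{align*}
\Erw\brk{\vecone\{\cL(\gamma,l)\cap\cA_{100l}\}Z_{l,q,\delta}(\G)}\leq\Erw\brk{\vecone\{\cL(\gamma,l)\}\exp\bc{n\cB_{\G,l}(q)+n\eps}}.
\end{align*}

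The heart of the argument is the estimate $\cB_{G,l}(q)\leq\cB_\thet(p)+\alpha/2$ for every $G\in\cL(\gamma,l)$ and every judicious $q\in\cQ(\delta,l)$ arising in the sum. For the variable contribution $\sum_T\lambda_{G,l}(T\mid\cV)(1-d_T)H(q_T)$, judiciousness and continuity of $H$ replace $q_T$ by $p_{l,T}$ up to $o(1)$, and $\lambda_{G,l}\approx\thet_l$ rewrites the sum as $\Erw[(1-d_\T)H(p_{l,\partial^l\T})\mid\cV]+o(1)$. Since $p_{l,\partial^l\T}=\Erw[p_\T\mid\partial^l\T]$, $1-d_\T\leq 0$, and $H$ is concave, Jensen's inequality gives $(1-d_\T)H(p_{l,\partial^l\T})\leq(1-d_\T)\Erw[H(p_\T)\mid\partial^l\T]$, and the outer expectation reproduces the variable part of $\cB_\thet(p)$. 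For the constraint contribution, the non-negativity of KL yields
\begin{align*}
H(q_T)+\bck{\ln\psi_T(\SIGMA)}_{q_T}-\KL{q_T}{\textstyle\bigotimes_{j\in[d_T]}q_{\partial^l[T\reroot j]}}\leq H(q_T)+\bck{\ln\psi_T(\SIGMA)}_{q_T};
\end{align*}
by judiciousness, $q_{T\marg j}\approx p_{l,\partial^l[T\reroot j]}$, which by \textbf{MA2} are the marginals of $p_T$, so \textbf{MA3} together with continuity of the constrained entropy maximizer in its marginals yields $H(q_T)+\bck{\ln\psi_T(\SIGMA)}_{q_T}\leq H(p_T)+\bck{\ln\psi_T(\SIGMA)}_{p_T}+o(1)$. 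Summing against $\lambda_{G,l}(\nix\mid\cF)\approx\thet_l(\nix\mid\cF)$, using $|F_n|/|V_n|\to\pr[\T\in\cF]/\pr[\T\in\cV]$, and invoking Lemma~\ref{Lemma_martingale} to transmit $p_{l,T\reroot j}\to p_{\T\reroot j}$ through \textbf{MA3} to $p_T\to p_\T$ as $l\to\infty$, recovers the constraint part of $\cB_\thet(p)$.

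The main obstacle is the final step on the constraint side: transmitting Lemma~\ref{Lemma_martingale}'s convergence through the \textbf{MA3} entropy maximization to conclude uniform convergence of the truncated-tree constraint contribution to the infinite-tree one. This relies on continuity of the constrained maximizer in its marginals, which holds because all $\psi\in\Psi$ are strictly positive, ruling out boundary degeneracies. Secondary technicalities include aligning $\alpha,\eps,\ell,\delta,\beta,\gamma$ so that all error terms aggregate to at most $\alpha$, and verifying that every $\ism_l$-class contributing to $\cA_{100l}$ automatically admits the $100l$-acyclic representative required by Proposition~\ref{Lemma_fmCalc}. Assembling the above, $|\cQ(\delta,l)|\cdot\exp(n(\cB_\thet(p)+\alpha/2+\eps))$ upper bounds the left-hand side up to sub-exponential factors; taking $n^{-1}\ln$ and $\limsup$ with $\eps\leq\alpha/4$ produces the claimed bound $\cB_\thet(p)+\alpha$.
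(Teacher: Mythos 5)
Your proof follows the paper's strategy exactly: cover the judicious assignments by a finite net of $(G,l)$-marginal sequences, compute the resulting conditional first moments via Proposition~\ref{Lemma_fmCalc}, discard the Kullback--Leibler term by non-negativity, and use judiciousness, local convergence and \Lem~\ref{Lemma_martingale} to compare $\cB_{G,l}(q)$ with $\cB_\thet(p)$. Your Jensen's-inequality argument for the variable part and your explicit appeal to {\bf MA2}/{\bf MA3} together with continuity of the constrained entropy maximizer for the constraint part spell out the step that the paper compresses into ``uniform continuity,'' and are consistent with the paper's reasoning.
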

\begin{proof}
Pick a small enough $\eps=\eps(\alpha)>0$.
By \Lem~\ref{Lemma_martingale} there exists $\ell$ such that 
	$\Erw[\TV{p_{l,\partial^l\T}-p_{\T}}|\cV]<\eps$ \mbox{for all $l\geq\ell$.}
Now, fix any $0<\beta,\gamma<\eps$, $l\geq\ell$, pick $\xi=\xi(\beta,l)$ small enough and assume that $n$ is big enough.
Let $Q(G)$ be the set of all $(G,l)$-marginal sequences that are $(\beta,l)$-judicious w.r.t.\ $p$.
Because $\fT_l$ is a finite set, there exists a number $N=N(\xi)$
such that for every factor graph $G$ there is a subset $Q_*(G)\subset Q(G)$ of size $|Q_*(G)|\leq N$ such that the following is true.
If $(G,\sigma)$ is $(\beta,l)$-judicious w.r.t.\ $p$, then $\sigma\in\bigcup_{q\in Q_*(G)}\Sigma(G,l,q,\xi)$.
Therefore, for all $G$ we have
	\begin{align}\label{eqCor_fmCalc1}
	Z(G)\vecone\{\mbox{$(G,\sigma)$ is $(\eps,l)$-judicious w.r.t.\ $p$}\}&\leq
		N\max_{q\in Q(G)}Z_{\ell,q,\xi}(G).
	\end{align}
\Prop~\ref{Lemma_fmCalc} and (\ref{eqCor_fmCalc1})  imply that
for $\xi$ small enough and $n$ large enough for any factor graph $G\in\cA_{100\ell}$ there is $q^G\in Q(G)$ such that
	\begin{align}\label{eqCor_fmCalc2}
	\ln\Erw[\vecone\{\cA_{100\ell}\}Z(\G)\bck{\vecone\{\mbox{$(\G,\SIGMA)$ is $(\eps,l)$-judicious w.r.t.\ $p$}\}}_{\G}|\G\ism_\ell G]
		&\leq\cB_{G,l}(q^G)+\alpha n/2.
	\end{align}
To proceed, we recall that the Kullback-Leibler divergence is non-negative.
Hence, (\ref{eqCor_fmCalc2}) implies that for large $n$,
	\begin{align}\nonumber
	&\ln\Erw[\vecone\{\cA_{100\ell}\}Z(\G)\bck{\vecone\{\mbox{$(\G,\SIGMA)$ is $(\eps,l)$-judicious w.r.t.\ $p$}\}}_{\G}|\G\ism_\ell G]\\
		&\qquad\leq\alpha n/2+
		\sum_{T\in\fT_\ell\cap\cV}(1-d_T)H(q_T^G)\lambda_G(T|\cV)
		+\frac{|F_n|}{|V_n|}\sum_{T\in\cT_\ell\cap\cF}
			\brk{H(q_T^G)+\bck{\ln\psi_T(\SIGMA)}_{q_T^G}
			}\lambda_G(T|\cF).
			\label{eqCor_fmCalc3}
	\end{align}
Further, for any $j\in[\Delta]$ the function $\nu\in\cP(\Omega^j)\mapsto H(\nu)$ is uniformly continuous
because $\cP(\Omega^j)$ is compact.
By the same token, $\nu\mapsto\bck{\ln\psi(\SIGMA)}_{\nu}$ is uniformly continuous for any $\psi\in\Psi$.
Consequently,
if $G\in\cL(\gamma,l)$ for some $\gamma<\eps$ and $\eps$ is chosen small enough, then (\ref{eqCor_fmCalc3}) entails
	\begin{align}
	\ln\Erw[\vecone\{\cA_{100\ell}\}Z(\G)\bck{\vecone\{\mbox{$(\G,\SIGMA)$ is $(\eps,l)$-judicious w.r.t.\ $p$}\}}_{\G}|\G\ism_\ell G]
		\leq\cB_\thet(p)+\alpha n.
			\label{eqCor_fmCalc4}
	\end{align}
Finallt, the assertion follows from (\ref{eqCor_fmCalc4}) and Bayes' rule.
\end{proof}

\begin{corollary}\label{Cor_fmCalc_lower}
For any $\alpha>0$ there exists $\ell>0$ such that for all $l\geq \ell$ we have
	\begin{align*} 
	\lim_{n\to\infty}\pr\brk{
		\frac1n\ln\Erw\brk{Z(\G)|\cT_l}\leq\cB_\thet(p)-\alpha\bigg|\cA_{100 l}}=0.
	\end{align*}
\end{corollary}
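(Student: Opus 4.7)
The plan is to run a conditional first-moment argument. Given $\alpha > 0$, I would construct, for each "typical" factor graph $G$, an explicit $(G,\ell)$-marginal sequence $q^\star$, apply Proposition~\ref{Lemma_fmCalc} to bound the conditional first moment from below by $n(\cB_{G,\ell}(q^\star) - \eps)$, and then verify that $\cB_{G,\ell}(q^\star) \geq \cB_\thet(p) - \alpha/2$ for $\ell$ large and $G$ close to the limiting local structure.

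The natural construction is to mimic the marginal assignment $p$ at depth $\ell$: set $q^\star_T = p_{\ell,T}$ for every $T \in \fT_\ell \cap \cV$, and for $T \in \fT_\ell \cap \cF$ let $q^\star_T$ be the unique maximiser of $H(\nu) + \bck{\ln\psi_T(\SIGMA)}_{\nu}$ over $\nu \in \cP(\Omega^{d_T})$ whose $j$-th marginal equals $p_{\ell,\partial^\ell[T\reroot j]}$. Properties \textbf{MS1} and \textbf{MS2} are immediate. For \textbf{MS3}, by construction $q^\star_{T\marg j} = p_{\ell,\partial^\ell[T\reroot j]} = q^\star_{\partial^\ell[T\reroot j]}$, so every summand in the consistency sum vanishes individually; this also makes $q^\star$ automatically $(0,\ell)$-judicious with respect to $p$.

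With $\delta>0$ supplied by Proposition~\ref{Lemma_fmCalc} for the given $\ell$ and a tolerance $\eps < \alpha/4$, that proposition yields, for every $100\ell$-acyclic $G$,
$\ln\Erw[\vecone\{\cA_{2\ell+5}\}Z_{\ell,q^\star,\delta}(\G)\mid \G\ism_\ell G] \geq n(\cB_{G,\ell}(q^\star)-\eps)$.
Since $Z(\G) \geq \vecone\{\cA_{2\ell+5}\}Z_{\ell,q^\star,\delta}(\G)$, the same lower bound transfers to $\ln\Erw[Z(\G)\mid \cT_\ell]$. The key step is then comparing $\cB_{G,\ell}(q^\star)$ with $\cB_\thet(p)$. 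By~\eqref{eqLocalWeakConvergence3} the empirical distribution $\lambda_{G,\ell}$ is within $\gamma$ of $\thet_\ell$ with high probability and $|F_n|/|V_n|\to\pr[\T\in\cF]/\pr[\T\in\cV]$; combined with the uniform continuity of $H$, of $\nu\mapsto\bck{\ln\psi_T(\SIGMA)}_\nu$, and of the Kullback-Leibler functional on the compact simplices $\cP(\Omega^{d_T})$, together with Lemma~\ref{Lemma_martingale} (which gives $p_{\ell,\partial^\ell\T}\to p_\T$ in total variation, so by continuity of the constrained-entropy maximiser $q^\star_T\to p_\T$ for $\T\in\cF$ as well), the empirical-average functional $\cB_{G,\ell}(q^\star)$ converges to an expression in $p$ and $\thet$. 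The judicious construction of $q^\star$ makes the KL correction in $\cB_{G,\ell}$ collapse (the marginals of $q^\star_T$ at a constraint agree with the variable-side values $q^\star_{\partial^\ell[T\reroot j]}$), so the constraint contribution reduces cleanly to $H(p_\T)+\bck{\ln\psi_\T(\SIGMA)}_{p_\T}$, reproducing precisely~\eqref{eqBetheFreeEnergy}.

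Assembling the pieces, choosing $\ell$ large and $\gamma,\eps$ small gives $\frac{1}{n}\ln\Erw[Z(\G)\mid \cT_\ell] \geq \cB_\thet(p)-\alpha$ on the event $\cL(\gamma,\ell)\cap\cA_{100\ell}$. Local weak convergence implies $\pr[\G\in\cL(\gamma,\ell)\mid\cA_{100\ell}]=1-o(1)$ (the high-girth hypothesis~\eqref{eqShortCycles} keeps conditioning on $\cA_{100\ell}$ benign), which yields the stated convergence in probability. I expect the principal obstacle to be the third-paragraph convergence argument for $\cB_{G,\ell}(q^\star)$ — in particular the bookkeeping that shows the KL correction vanishes under the judicious choice; without that collapse the first-moment lower bound would fall short of $\cB_\thet(p)$ by a mutual-information-type quantity involving $p_\T$ and its marginals.
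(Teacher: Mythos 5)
Your proposal is correct and follows essentially the same route as the paper's own proof: the identical choice of $q$ (marginals from $p_\ell$ on variable-trees, constrained entropy maximiser on constraint-trees), the same observation that this forces the Kullback--Leibler corrections in $\cB_{G,\ell}(q)$ to vanish, the same application of Proposition~\ref{Lemma_fmCalc} for the conditional first moment, and the same combination of Lemma~\ref{Lemma_martingale}, the local weak convergence hypothesis~(\ref{eqLocalWeakConvergence3}) and uniform continuity on the compact simplices to transfer $\cB_{G,\ell}(q)$ to $\cB_\thet(p)$. The worry you flag in your last paragraph is not a gap — the KL collapse you identified is precisely the point the paper relies on.
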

\begin{proof}
Choose a small $\eps=\eps(\alpha)>0$.
By \Lem~\ref{Lemma_martingale} there exists $\ell$ such that  $\Erw[\TV{p_{l,\partial^l\T}-p_{\T}}|\cV]<\eps$ for all $l\geq \ell$.
Hence, fix some $l\geq\ell$ and define
	$q:T\in\fT_l\cap\cV\to\cP(\omega)$, $T\mapsto p_{l,\partial^{l} T}$.
Moreover, for $T\in\fT_l\cap\cF$ let $q_{T}\in\cP(\Omega^{d_T})$ be such that
	$H(q_{T})+\bck{\ln\psi_T(\SIGMA)}_{q_{T}}$ is maximum subject to the condition that  $q_{T\marg j}=q_{\partial^lT\reroot j}$ for all $j\in[d_T]$
(cf.\ (\ref{eqConstraintMargs})).
Further, pick $\delta=\delta(\eps,l)>0$ small enough.
Then \Prop~\ref{Lemma_fmCalc} implies that for large $n$ and any $G\in\cA_{100 l}$
	\begin{align}\nonumber
	&\ln\Erw[Z_{l,q,\delta}(\G)|\G\ism_\ell G]\geq\cB_{G,\ell}(q)-\alpha n/2\\
		&\qquad=-\alpha n/2+\sum_{T\in\fT_\ell\cap\cV}(1-d_T)H(q_T)\lambda_{G,l}(T|\cV)
		+\frac{|F_n|}{|V_n|}\sum_{T\in\cT_\ell\cap\cF}
			\brk{H(q_T)+\bck{\ln\psi_T(\SIGMA)}_{q_T}}\lambda_{G,l}(T|\cF),
				\label{eqCor_fmCalc_lower1}
	\end{align}
because the definition of $q$ ensures that the Kullback-Leibler divergences vanish.
Since $\TV{\thet_l-\lambda_{\G,l}}<\eps$ with high probability by (\ref{eqLocalWeakConvergence3})
and $\Erw[\TV{p_{l,\partial^l\T}-p_{\T}}|\cV]<\eps$, the assertion follows from (\ref{eqCor_fmCalc_lower1}).
\end{proof}

\subsection{Proof of \Thm~\ref{Thm_symUpperBound}}\label{Sec_symUpperBound}
We begin by spelling out the following consequence of the symmetry assumption.

\begin{lemma}\label{Lemma_psymmetric}
If $\underline\cM$ is $p$-symmetric, then
 for any $\eps>0$ for all sufficiently large $\ell$ we have
 	\begin{align}\label{eqLemma_psymmetric_1}
	\lim_{n\to\infty}\pr\brk{\sum_{x\in V_n}\TV{\mu_{\G\marg x}-p_{\ell,\partial^\ell[\G,x]}}>\eps n}&
		=\lim_{n\to\infty}\pr\brk{\mu_{\G}\mbox{ fails to be $(\eps,2)$-symmetric}}=0\quad\mbox{and}\\
	\lim_{n\to\infty}\pr\brk{\sum_{x\in V_n}\TV{\mu_{\hat\G_\ell\marg x}-p_{\ell,\partial^\ell[\hat\G_\ell,x]}}>\eps n}
		&=\lim_{n\to\infty}\pr\brk{\mu_{\hat\G_\ell}\mbox{ fails to be $(\eps,2)$-symmetric}}=0.\label{eqLemma_psymmetric_2}
	\end{align}
\end{lemma}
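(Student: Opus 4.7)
The plan is to derive both statements of the lemma from the $p$-symmetry hypothesis via two elementary facts about total variation distance: (i) $\TV{\mu_{\marg x} - \nu_{\marg x}} \leq \TV{\mu - \nu}$ whenever we restrict a joint distribution to a single coordinate, and (ii) $\TV{\mu_1 \tensor \mu_2 - \nu_1 \tensor \nu_2} \leq \TV{\mu_1-\nu_1} + \TV{\mu_2-\nu_2}$. I will give the argument for $\G$; the one for $\hat\G_\ell$ is identical, with $p$-symmetry of the planted distribution replacing $p$-symmetry of $\underline\cM$.

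First I would prove the marginal statement in \eqref{eqLemma_psymmetric_1}. Fix $\eps>0$ and choose $\ell_0$ as in the definition of $p$-symmetry. For any $\ell > \ell_0$, by fact (i) applied to the $x$-coordinate,
\begin{align*}
\TV{\mu_{\G\marg x}-p_{\ell,\partial^\ell[\G,x]}}
&\leq \frac1n\sum_{y\in V_n}\TV{\mu_{\G\marg\{x,y\}}-p_{\ell,\partial^\ell[\G,x]}\tensor p_{\ell,\partial^\ell[\G,y]}}+O(1/n),
\end{align*}
where the $O(1/n)$ accounts for the diagonal contribution $y=x$, which is at most $2$ per term. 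Summing over $x\in V_n$ and dividing by $n$, the right-hand side becomes the double average that $p$-symmetry controls, so the first half of \eqref{eqLemma_psymmetric_1} follows.

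Next I would derive $(\eps,2)$-symmetry. The triangle inequality applied with the product distribution $p_{\ell,\partial^\ell[\G,x]}\tensor p_{\ell,\partial^\ell[\G,y]}$ as an intermediate step gives
\begin{align*}
\TV{\mu_{\G\marg\{x,y\}}-\mu_{\G\marg x}\tensor\mu_{\G\marg y}}
&\leq \TV{\mu_{\G\marg\{x,y\}}-p_{\ell,\partial^\ell[\G,x]}\tensor p_{\ell,\partial^\ell[\G,y]}}\\
&\quad +\TV{\mu_{\G\marg x}\tensor\mu_{\G\marg y}-p_{\ell,\partial^\ell[\G,x]}\tensor p_{\ell,\partial^\ell[\G,y]}},
\end{align*}
and fact (ii) bounds the second summand by $\TV{\mu_{\G\marg x}-p_{\ell,\partial^\ell[\G,x]}}+\TV{\mu_{\G\marg y}-p_{\ell,\partial^\ell[\G,y]}}$. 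Averaging over $x,y\in V_n$ yields
\begin{align*}
\frac1{n^2}\sum_{x,y}\TV{\mu_{\G\marg\{x,y\}}-\mu_{\G\marg x}\tensor\mu_{\G\marg y}}
&\leq \frac1{n^2}\sum_{x,y}\TV{\mu_{\G\marg\{x,y\}}-p_{\ell,\partial^\ell[\G,x]}\tensor p_{\ell,\partial^\ell[\G,y]}}\\
&\quad +\frac2n\sum_{x}\TV{\mu_{\G\marg x}-p_{\ell,\partial^\ell[\G,x]}}.
\end{align*}
The first summand is below $\eps/3$ with high probability by $p$-symmetry (after a suitable rescaling of the target accuracy), and the second is below $2\eps/3$ with high probability by the marginal statement just established. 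Hence $\mu_{\G}$ is $(\eps,2)$-symmetric with high probability.

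There is no real obstacle here: the argument is a direct unpacking of the definitions combined with the two standard inequalities for total variation, and the $p$-symmetry assumption is precisely designed to supply the pair-marginal control needed at the top of the triangle-inequality chain. The only minor point to watch is the diagonal $x=y$ term in the sums, which is absorbed into the $O(1/n)$ error as noted.
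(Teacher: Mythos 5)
Your proposal is correct and follows essentially the same route as the paper's proof. Both arguments (a) deduce the one-point marginal bound $\frac1n\sum_x\TV{\mu_{\G\marg x}-p_{\ell,\partial^\ell[\G,x]}}$ from the two-point hypothesis by a marginalization/triangle-inequality step (you invoke the data-processing inequality for TV, the paper spells out the same estimate coordinatewise), and then (b) establish $(\eps,2)$-symmetry by the triangle inequality through the product $p_{\ell,\partial^\ell[\G,x]}\tensor p_{\ell,\partial^\ell[\G,y]}$ combined with the standard bound on the TV distance of tensor products; the planted case is handled identically in both.
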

\begin{proof}
Choose $\eta=\eta(\eps)>0$ small enough.
For an integer $\ell>0$ consider the event
	$$\cE_\ell=\cbc{\sum_{x,y\in V_n}\TV{\mu_{\G\marg\{x,y\}}-p_{\ell,\partial^\ell[\G,x]}\tensor p_{\ell,\partial^\ell[\G,y]}}<\eta^2 n^2}$$
If $\cM$ is $p$-symmetric, then $\lim_{n\to\infty}\pr\brk{\G\in\cE_\ell}$ for sufficiently large $\ell$.
Similarly, if the planted distribution is $p$-symmetric, then 
$\lim_{n\to\infty}\pr\brk{\hat\G_\ell\in\cE_\ell}$ for large $\ell$.

Hence, assume that $G\in\cE_\ell$.
Then by the triangle inequality, for any $\omega\in\Omega$,
	\begin{align*}
	\frac1n\sum_{x\in V_n}\abs{p_{\ell,\partial^\ell[G,x]}(\omega)-\mu_{G\marg x}(\omega)}
		&=\frac1{n^2}\sum_{x\in V_n}\abs{\brk{\sum_{y\in V_n}\sum_{\omega'\in\Omega}p_{\ell,\partial^\ell[G,x]}(\omega)p_{\ell,\partial^\ell[G,y]}(\omega')}-
			\brk{\sum_{y\in V_n}\sum_{\omega'\in\Omega}\mu_{G\marg x,y}(\omega,\omega')}}\leq\eta^2.
	\end{align*}
Therefore,
	\begin{align}\label{eqLemma_psymmetric2}
	\frac1n\sum_{x\in V_n}\TV{p_{\ell,\partial^\ell[G,x]}-\mu_{G\marg x}}&\leq\eta^2|\Omega|<\eta.
	\end{align}
Furthermore, by (\ref{eqLemma_psymmetric2}) and the triangle inequality,
	\begin{align}\label{eqLemma_psymmetric3}
	\frac1{n^2}\sum_{x,y\in V_n}\TV{\mu_{G\marg x}\tensor\mu_{G\marg y}-p_{\ell,\partial^\ell[G,x]}\tensor p_{\ell,\partial^\ell[G,y]}}\leq2 \eta.
	\end{align}
Since $G\in\cE_\ell$, (\ref{eqLemma_psymmetric3}) entails that
	\begin{align*}
	\frac1{n^2}\sum_{x,y\in V_n}\TV{\mu_{G\marg x}\tensor\mu_{G\marg y}-\mu_{G\marg\{x,y\}}}\leq 3 \eta<\eps,
	\end{align*}
i.e., $G$ is $(\eps,2)$-symmetric.
\end{proof}

\begin{lemma}\label{Lemma_judicious}
There is a number $\eps_0=\eps_0(\Delta,\Omega,\Psi,\Theta)$ such that for all
$0<\eps<\eps_0$, $\ell>0$ there exists $\chi>0$ such that for large enough $n$ the following is true.
If $G\in\cG(\cM_n)$ is a $(2\ell+5)$-acyclic factor graph such that
	\begin{align}\label{eqLemma_judicious1}
	\sum_{x\in V_n}\TV{\mu_{G\marg x}-p_{\ell,\partial^\ell[G,x]}}&<\eps^3n
	\end{align}
and $\mu_G$ is $(\chi,2)$-symmetric,
then $\bck{\vecone\{(G,\SIGMA)\mbox{ is $(\eps,\ell)$-judicious w.r.t.\ $p$}\}}_{G}\geq1/2$.
\end{lemma}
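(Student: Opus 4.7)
\textbf{Proof plan for Lemma~\ref{Lemma_judicious}.}
The strategy is to show that the expected value of the judicious sum under $\mu_G$ is at most $\eps/2$; then Markov's inequality yields the conclusion.
Fix $\ell$, let $M=|\fT_\ell|$ (finite by the bounded-degree assumption), choose a density threshold $\beta=\eps/(20\Delta M)$, and then pick $\chi>0$ sufficiently small; take $\eps_0$ small enough that $\eps^3\leq\eps/10$ for $\eps<\eps_0$.

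The main technical tool is a second-moment estimate derived directly from $(\chi,2)$-symmetry.
For any $S\subset V_n$ and $\omega\in\Omega$, writing $\bar\mu_S(\omega)=|S|^{-1}\sum_{x\in S}\mu_{G\marg x}(\omega)$,
\begin{align*}
\bck{\bc{\SIGMA[\omega|S]-\bar\mu_S(\omega)}^2}_G
=\frac{1}{|S|^2}\sum_{x,y\in S}\bc{\bck{\SIGMA[\omega|x]\SIGMA[\omega|y]}_G-\bck{\SIGMA[\omega|x]}_G\bck{\SIGMA[\omega|y]}_G}.
\end{align*}
Each covariance is bounded in absolute value by $\TV{\mu_{G\marg\{x,y\}}-\mu_{G\marg x}\otimes\mu_{G\marg y}}$, and $(\chi,2)$-symmetry says these sum over all $x,y\in V_n$ to at most $\chi n^2$.
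Hence $\bck{|\SIGMA[\omega|S]-\bar\mu_S(\omega)|}_G\leq\sqrt\chi\,n/|S|$ by Jensen's inequality.

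For the variable-rooted contribution, let $V_T=\{x\in V_n:\partial^\ell[G,x]=T\}$ and call $T$ \emph{big} if $|V_T|\geq\beta n$.
The variance bound applied with $S=V_T$ for big $T$ gives $\bck{\TV{\SIGMA[\cdot|V_T]-\bar\mu_{V_T}}}_G\leq|\Omega|\sqrt\chi/(2\beta)$, while the marginal hypothesis plus the triangle inequality yields
\begin{align*}
\sum_{T\in\fT_\ell\cap\cV}\lambda_{G,\ell}[T|\cV]\TV{\bar\mu_{V_T}-p_{\ell,T}}
\leq\frac{1}{n}\sum_{x\in V_n}\TV{\mu_{G\marg x}-p_{\ell,\partial^\ell[G,x]}}<\eps^3.
\end{align*}
Small trees contribute at most $\beta M$ to the judicious sum (using $\TV\leq 1$), so the variable-side expected contribution is at most $\beta M+|\Omega|\sqrt\chi/(2\beta)+\eps^3\leq\eps/4$.
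For the constraint-rooted contribution, set $U_T=\{a\in F_n:\partial^{\ell+1}[G,a]=T\}$ and $S_{T,j}=\{\partial(G,a,j):a\in U_T\}$ regarded as a multiset.
The $(2\ell+5)$-acyclicity of $G$ guarantees $\partial^\ell[G,x]=\partial^\ell[T\reroot j]$ for every $x\in S_{T,j}$, and that each variable is repeated at most $\Delta$ times in $S_{T,j}$, so the analogous variance computation (with at most $\Delta^2\chi n^2$ cross-terms and $O(\Delta|U_T|)$ ``self'' terms) pushes through with an extra $\Delta$-dependent factor absorbed into the choice of $\chi$.
Summing over big and small pairs $(T,j)$ bounds the constraint-side expected contribution by $\eps/4$ as well, and Markov's inequality closes the argument.

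The main obstacle is the constraint-side bookkeeping: one must verify that $(2\ell+5)$-acyclicity is strong enough to pin down $\partial^\ell[G,\partial(G,a,j)]=\partial^\ell[T\reroot j]$ for every $a\in U_T$ and to bound the multiplicities in $S_{T,j}$, since otherwise the variance estimate could be spoiled by a few high-multiplicity variables.
Once this local tree-likeness is in place, the whole argument reduces to $(\chi,2)$-symmetry, the marginal hypothesis $\sum_x\TV{\mu_{G\marg x}-p_{\ell,\partial^\ell[G,x]}}<\eps^3 n$, and routine triangle inequalities.
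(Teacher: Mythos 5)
Your proof is correct, and it takes a genuinely different and more elementary route than the paper's. The paper invokes \Thm~\ref{Thm_decomp} to produce a homogeneous pair $(\vV,\vS)$ refining the partition by depth-$(\ell+2)$ neighborhoods, uses \Lem~\ref{Lemma_regularSymmetric} to show each large regular state $S_j$ is a $(\beta,2)$-state, applies \Cor~\ref{Cor_states2} to pull the conditional marginals $\mu_{G\marg x}[\nix|S_j]$ back to the unconditional ones, and then exploits {\bf HM2} (an $L^\infty$-type concentration within each cell $V_{T,i}$) to control the empirical fluctuations, finally averaging over states by Bayes. You bypass all of that machinery: the observation that $(\chi,2)$-symmetry \emph{directly} gives $\sum_{x,y\in V_n}\abs{\bck{\SIGMA[\omega|x]\SIGMA[\omega|y]}_G-\bck{\SIGMA[\omega|x]}_G\bck{\SIGMA[\omega|y]}_G}\leq\chi n^2$ yields, via Jensen, $\bck{\abs{\SIGMA[\omega|S]-\bar\mu_S(\omega)}}_G\leq\sqrt\chi\,n/\abs S$ for every $S$, after which the marginal hypothesis, a big/small dichotomy on the cells $V_T$ (resp.\ the multisets $S_{T,j}$), and Markov's inequality finish the proof. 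This is a clean, self-contained $L^2$ argument that never touches the regularity lemma; what the paper's heavier route buys is alignment with the machinery already developed in \Sec~\ref{Sec_hom}, but for this lemma in isolation your approach is simpler. When writing it up, make sure to (i) handle the diagonal terms $x=y$ and the bounded-multiplicity repetitions in the constraint-side multiset, which contribute $O(\Delta/\abs S)=o(1)$ once $\abs S\geq\beta n$; and (ii) spell out that $(2\ell+5)$-acyclicity makes $B_{\ell+1}(a)$ a tree containing $B_\ell(x)$ for $x=\partial(G,a,j)$, forcing $\partial^\ell[G,x]=\partial^\ell[T\reroot j]$ — you correctly flag this as the key point for the constraint side, and it does hold.
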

\begin{proof}
Pick $\delta=\delta(\ell,\eps)>0$ small, $\beta=\beta(\delta)$ and $\gamma = \gamma(\beta)$ smaller and
$\chi = \chi(\gamma)>0$ smaller still and assume that $n>n_0(\chi)$.
Let $\vec V_0$ be the partition of $V_n$ such that $x,y\in V_n$ belong to the same class iff $\partial^{\ell+2}[G,x]=\partial^{\ell+2}[G,y]$.
By \Thm~\ref{Thm_decomp} there exists a refinement $\vec V$ of $\vec V_0$ such that $\mu_G$ is $\gamma$-homogeneous
with respect to $(\vV,\vS)$ for some partition $\vS$ of $\Omega^n$  such that $\#\vV+\#\vS\leq N=N(\gamma)$.
We may index the classes of $\vec V$ as $V_{T,i}$ with $T=\partial^{\ell+2}[G,x]$ for all $x$ in the class and $i\in[N_T]$ for some integer $N_T$.

Let $J$ be the set of all $j\in[\#\vS]$ such that $\mu(S_j)\geq\delta^7/N$ and $\mu[\nix|S_j]$ is $\gamma$-regular.
Then by {\bf HM1}
	\begin{equation}\label{eqNothingMuch}
	\sum_{j\in J}\mu(S_j)\geq1-\delta^6.
	\end{equation}
Further, \Lem~\ref{Lemma_regularSymmetric} shows that $S_j$ is a $(\beta,2)$-state if $j\in J$.
Therefore, choosing $\chi$ small enough, we obtain from \Cor~\ref{Cor_states2} that
	\begin{align*}
	\frac1n\sum_{x\in V_n}\TV{\mu_{G\marg x}[\nix|S_j]-\mu_{G\marg x}}&<\delta^7
		\quad\mbox{
		for all $j\in J$.} 
	\end{align*}
Therefore, by (\ref{eqLemma_judicious1}) and the triangle inequality, for $j\in J$ 
we get
	\begin{align*}
	\frac1n\sum_{x\in V_n}\TV{\mu_{G\marg x}[\nix|S_j]-p_{\ell,\partial^\ell[G,x]}}\leq
	\eps^3+\frac1n\sum_{x\in V_n}\TV{\mu_{G\marg x}[\nix|S_j]-\mu_{G\marg x}}&<\eps^3+3\delta^7<2\eps^3.
	\end{align*}
Consequently, by (\ref{eqNothingMuch}), 
Bayes' rule and the triangle inequality,
summing over all $T\in\fT_{\ell+2}\cap\cV$ and $i\in[N_T]$ we get
	\begin{align}
	\frac1n \sum_{T,i}|V_{T,i}|\bck{\TV{\SIGMA[\nix|V_{T,i}]-p_{\ell,T}}}_G&=
		\frac1n \sum_{T,i}\sum_{j\in[\#\vS]}|V_{T,i}|\mu_G(S_j)\bck{\TV{\SIGMA[\nix|V_{T,i}]-p_{\ell,T}}|S_j}_G\nonumber\\
		&\leq\delta^7+\frac1n\sum_{T,i}\sum_{j\in[\#\vS]}|V_{T,i}|\mu_G(S_j)\TV{\bck{\SIGMA[\nix|V_{T,i}]|S_j}_G-p_{\ell,T}}
			\qquad\mbox{[by {\bf HM2}]}\nonumber\\
		&\leq\delta^7+\frac1n\sum_{T,i}\sum_{x\in V_{T,i}}\sum_{j\in[\#\vS]}
			\mu_G(S_j)\TV{\mu_{G\marg x}[\nix|S_j]-p_{\ell,\partial^\ell[G\marg x]}}<3\eps^3.
		\label{eqLemma_judicious2}
	\end{align}
Applying the triangle inequality once more, we find
	\begin{align}\label{eqLemma_judicious3}
	\sum_{T\in\fT_\ell\cap\cV}\lambda_{G,\ell}[T|\cV]\bck{\TV{q_{G,\SIGMA,\ell,T}-p_{\ell,T}}}&\leq
		\frac1n\sum_{T,i}|V_{T,i}|\bck{\TV{\SIGMA[\nix|V_{T,i}]-p_{\ell,T}}}_G<3\eps^3.
	\end{align}

Further, consider $T\in\fT_\ell\cap\cF$ such that $\lambda_{G,\ell}[T|\cF]>0$ and let $j\in[d_F]$.
Because $G$ is $(2\ell+5)$-acyclic, there exists a set $\Gamma(T,j)\subset\fT_{\ell+2}\cap\cV$ with the following two properties.
First, for every constraint node $a$ with $\partial^{\ell+1}[G,a]=T$ the variable node $x=\partial(G,a,j)$ satisfies $\partial^{\ell+2}[G,x]\in\Gamma(T,j)$.
Second, for every variable node $x$ with $\partial^{\ell+2}[G,x]\in\Gamma(T,j)$ there is a constraint node $a$
with $\partial^{\ell+1}[G,a]=T$ such that $\partial(G,a,j)=x$.
For $R\in\Gamma(T,j)$ let $m_{R,T,j}$ be the number of constraint nodes $a$ with $\partial^{\ell+1}[G,a]=T$ such that
$x=\partial(G,a,j)$ satisfies $\partial^{\ell+2}[G,x]=R$.
Then by the triangle inequality,
	\begin{align}
	&\hspace{-2cm}\sum_{T\in\fT_\ell\cap\cF}\sum_{j\in[d_T]}\lambda_{G,\ell}[T|\cF]\bck{\TV{q_{G,\SIGMA,\ell,T\marg j}-p_{\ell,\partial^\ell[T\reroot j]}}}_G
			\nonumber\\
		&\leq\sum_{T\in\fT_\ell\cap\cF}\sum_{j\in[d_T]}\sum_{R\in\Gamma(T,j)}\frac{m_{R,T,j}}{|F_n|}
			\bck{\TV{\SIGMA[\nix|V_{R,i}]-p_{\ell,R}}}_G\nonumber\\
		&\leq\frac{\Delta^2}n\sum_{R\in\fT_{\ell+2}\cap\cV}\sum_{i\in[d_R]}\bck{\TV{\SIGMA[\nix|V_{R,i}]-p_{\ell,R}}}_G;
	\label{eqLemma_judicious4}
	\end{align}
the last inequality follows because all degrees are between one and $\Delta$.
Finally, the assertion follows from (\ref{eqLemma_judicious2}), (\ref{eqLemma_judicious3}) and (\ref{eqLemma_judicious4}).
\end{proof}

We proceed by proving the upper bound and the lower bound statement from \Thm~\ref{Thm_symUpperBound} separately.
Strictly speaking, the proof of the lower bound implies the upper bound as well.
But presenting the arguments separately makes them slightly easier to follow.

\begin{proof}[Proof of \Thm~\ref{Thm_symUpperBound}, upper bound]
For $\eps,l>0$ let
	$\cE(\eps,l)=\{\sum_{x\in V_n}\TV{\mu_{\G\marg x}-p_{l,\partial^l[\G,x]}}<\eps n\}.$
Additionally, 
let  $\cS(\chi)$ be the event that $\mu_{\G}$ is $(\chi,2)$-symmetric
and let $\cL(\eps,l)$ be the event that $\TV{\lambda_{\G,l}-\thet_l}<\eps$.
We assume that $\underline\cM$ is $p$-symmetric.

Given $\alpha>0$ choose a small enough $\eps>0$ and a large enough $\ell>0$ as promised by \Cor~\ref{Cor_fmCalc}.
By \Lem~\ref{Lemma_martingale} there is $\ell_*>\ell$ such that
	\begin{align}\label{eqmyMartingale1}
	\Erw\brk{\TV{p_{l,\partial^l\T}-p_{\T}}|\cV}<\eps^4\qquad\mbox{ for all $l\geq\ell_*$}.
	\end{align}
Let $\chi=\chi(\eps,\ell_*)$ be the number provided by \Lem~\ref{Lemma_judicious}. 
Then \Lem~\ref{Lemma_psymmetric} implies that 
	$\lim_{n\to\infty}\pr\brk{\G\in\cS(\chi)}=1.$
Similarly, \Lem~\ref{Lemma_psymmetric} implies that  for large enough $l$ we have
	$\lim_{n\to\infty}\pr\brk{\G\in\cE(\eps^4,l)}=1$
Hence, 
the local convergence assumption~(\ref{eqLocalWeakConvergence3}) implies that for all large enough $l$,
	\begin{align}
	\lim_{n\to\infty}\pr\brk{\G\in\cS(\chi)\cap\cL(\eps^4,l)\cap\cE(\eps^4,l)}&=1\label{eqmyMartingale4}.
	\end{align}
Further, we claim that $\cL(\eps^4,l)\cap\cE(\eps^4,l)\subset\cL(\eps^4,\ell_*)\cap\cE(\eps^3,\ell_*)$.
Indeed, if $l \geq \ell_*$, then $\cL(\eps^4,l)\subset\cL(\eps^4,\ell_*)$.
Moreover, if $G\in \cL(\eps^4,l)\cap\cE(\eps^4,l)$, then with $\vec x\in V_n$ chosen uniformly at random we find
	\begin{align*}
	\Erw\TV{\mu_{G\marg\vec x}-p_{\ell_*,\partial^{\ell_*}[G,\vec x]}}&\leq
		\Erw\TV{\mu_{G\marg\vec x}-p_{l,\partial^{l}[G,\vec x]}}+\Erw\TV{p_{\ell_*,\partial^{\ell_*}[G,\vec x]}-p_{l,\partial^{l}[G,\vec x]}}\\
		&\leq\eps^4+\sum_{T\in\fT_l\cap\cV}\lambda_{G,l}(T)\TV{p_{l,T}-p_{\ell_*,\partial^{\ell_*}T}}\\
		&\leq\eps^4+2\TV{\thet_l[\nix|\cV]-\lambda_{G,l}[\nix|\cV]}+\Erw\brk{\TV{p_{l,\partial^l\T}-p_{\ell_*,\partial^{\ell_*}\T}}|\cV}<\eps^3.
	\end{align*}
Consequently, combining (\ref{eqmyMartingale1}) and (\ref{eqmyMartingale4}), we find that the event
$\cB(\alpha)=\cS(\chi)\cap\cL(\eps^4,\ell_*)\cap\cE(\eps^3,\ell_*)$ satisifes
	\begin{align}
	\lim_{n\to\infty}\pr\brk{\G\in\cB(\alpha)
	}&=1.\label{eqmyMartingale6}
	\end{align}
Further, if
$G\in\cB(\alpha)\cap\cA_{100\ell_*}$, 
then
$Z(G)\leq2Z(G)\bck{\vecone\{(G,\SIGMA)\mbox{ is $(\eps,\ell_*)$-judicious w.r.t.\ $p$}\}}_{G}$  by 
\Lem~\ref{Lemma_judicious} and the choice of $\chi$.
Therefore,
	\begin{align}\label{eqImjudicious_untensorised}
	\Erw[\vecone\{\G\in\cB(\alpha)\cap\cA_{100\ell_*}\}Z(\G)]&\leq2
		\Erw\brk{\vecone\{\G\in\cL(\eps^4,\ell_*)\cap\cA_{100\ell_*}\}
			\bck{\vecone\{(\G,\SIGMA)\mbox{ is $(\eps,\ell_*)$-judicious w.r.t.\ $p$}\}}_{\G}Z(\G)}.
	\end{align}
Since $\ell_*>\ell$, for large enough $n$ \Cor~\ref{Cor_fmCalc} and (\ref{eqImjudicious_untensorised}) yield
	\begin{align}\label{eqProofThm_symUpperBound2}
	\Erw[\vecone\{\G\in\cB(\alpha)\cap\cA_{100\ell_*}\}Z(\G)]&\leq2\exp(n(\cB_\thet(p)+\alpha)).
	\end{align}
Further, 
combining (\ref{eqmyMartingale6}) and (\ref{eqProofThm_symUpperBound2}) and using Markov's inequality,
we conclude that
	$$\lim_{n\to\infty}\pr\brk{Z(\G)>\exp(n(\cB_\thet(p)+2\alpha))|\cB(\alpha)\cap\cA_{100\ell_*}}=0.$$
Therefore, (\ref{eqmyMartingale6}), the high girth assumption and \Prop~\ref{Lemma_conc} yield
	\begin{equation}\label{eqProofThm_symUpperBound99}
	\lim_{n\to\infty}\pr\brk{Z(\G)>\exp(n(\cB_\thet(p)+2\alpha))}=0.
	\end{equation}
Finally, since $|n^{-1}\ln Z(\G)|$ is bounded by some number $C=C(\Delta,\Omega,\Psi,\Theta)>0$ by the definition (\ref{eqZ}) of $Z$,
(\ref{eqProofThm_symUpperBound99}) implies that
	$\limsup_{n\to\infty}n^{-1}\Erw[\ln Z(\G)]\leq\cB_\thet(p)+3\alpha$.
Taking $\alpha\to0$ completes the proof.
\end{proof}

To establish the lower bound we introduce a construction reminiscent of those used in~\cite{DMSS,DSS1,Galanis,MWW,SS}.
Namely, starting from the sequence $\underline\cM$ of $(\Delta,\Omega,\Psi,\Theta)$-models, we define another
 sequence $\underline\cM^\tensor=(\cM^\tensor_n)_n$ of models as follows.
Let $\Omega^\tensor=\Omega\times\Omega$ and let us denote pairs $(\omega,\omega')\in\Omega^\tensor$ by $\omega\tensor\omega'$.
Further, for any $\psi:\Omega^h\to(0,\infty)$ we define a function
	$$\psi^\tensor:(\Omega^{\tensor})^{h}\to(0,\infty),\qquad
		({\omega_1}\tensor{\omega_1'},\ldots,{\omega_{h}}\tensor{\omega_{h}'})\mapsto
			\psi(\omega_1,\ldots,\omega_{h})\cdot\psi(\omega_1',\ldots,\omega_{h}').$$
Let $\Psi^\tensor=\{\psi^\tensor:\psi\in\Psi\}$.
Then the $(\Delta,\Omega,\Psi,\Theta)$-model $\cM_n=(V_n,F_n,d_n,t_n,(\psi_a)_{a\in F_n})$ gives rise to the $(\Delta,\Omega^\tensor,\Psi^\tensor,\Theta)$-model
	$\cM_n^\tensor=(V_n,F_n,d,t,(\psi_a^\tensor)_{a\in F_n})$.

Clearly, there is a canonical bijection $\cG(\cM)\to\cG(\cM^\tensor)$, $G\mapsto G^\tensor$.
Moreover, the construction ensures that the Gibbs measure $\mu_{G^\tensor}\in\cP(\Omega^{\tensor\,n})$ equals $\mu_G\tensor\mu_G$.
Explicitly, for all $\omega_1,\omega_1',\ldots,\omega_n,\omega_n'\in\Omega$,
	\begin{align}\label{eqTensorConstruction1}
	\mu_{G^\tensor}(\omega_1\tensor\omega_1',\ldots,\omega_n\tensor\omega_n')=\mu_G(\omega_1,\ldots,\omega_n)\mu_G(\omega_1',\ldots,\omega_n').
	\end{align}
In effect, we obtain
	\begin{align}\label{eqTensorConstruction2}
	Z(G^\tensor)&=Z(G)^2.
	\end{align}
Further, writing $\fG^\tensor,\fT^\tensor$ for the $(\Delta,\Omega^\tensor,\Psi^\tensor,\Theta)$-templates and the acyclic
$(\Delta,\Omega^\tensor,\Psi^\tensor,\Theta)$-templates,
we can lift the marginal assignment $p$ from $\fT$ to $\fT^\tensor$ by letting $p^\tensor_{T^\tensor}=p_T\tensor p_T$ for all $T$.
Additionally, let $\thet^\tensor\in\cP(\fT^\tensor)$ be the image of  $\thet$ under the map $T\in\fT\mapsto T^\tensor\in\fT^\tensor$ so that
	\begin{align}\label{eqBetheSquare}
	\cB_{\thet^\tensor}(p^\tensor)&=2\cB_{\thet}(p).
	\end{align}

\begin{proof}[Proof of \Thm~\ref{Thm_symUpperBound}, lower bound]
We assume that $\underline\cM$ is $p$-symmetric and that the same is true of the planted distribution.
For $\eps,l>0$ consider the event
	\begin{align}\label{eqEevent}
	\cE^\tensor(\eps,l)&=\cbc{\frac1{n}\sum_{x\in V_n}\TV{\mu_{\G^\tensor\marg x}-p^\tensor_{\partial^l[\G^\tensor,x]}}<\eps}.
	\end{align}
and let  $\cS^\tensor(\chi)$ be the event that $\mu_{\G^\tensor}$ is $(\chi,2)$-symmetric.
Moreover, as before we let $\cL(\eps,\ell)=\{\TV{\lambda_{\G,\ell}-\thet_\ell}<\eps\}$.
Basically, we are going to apply the same argument as in the proof of the upper bound to the random factor graph
$\G^\tensor$ and to $\hat\G_\ell$ for a large enough $\ell$.

Hence, let $\alpha>0$.
Then \Cor~\ref{Cor_fmCalc} applied to $\underline\cM^\tensor$
yields a small $\eps=\eps(\alpha)>0$ and a large $\ell=\ell(\alpha)>0$.
Moreover, \Cor~\ref{Cor_fmCalc_lower} provides a large $\ell'(\alpha)>0$.
Further, by \Lem~\ref{Lemma_martingale} and (\ref{eqTensorConstruction1}) there exists $\ell_*>\ell+\ell'$ such that
	\begin{align}\label{eqmyMartingale1Tensor}
	\Erw\brk{\TV{p_{\ell,\partial^\ell\T}-p_{\T}}|\cV}+
		\Erw\brk{\TV{p^\tensor_{\ell,\partial^\ell\T^\tensor}-p^\tensor_{\T^\tensor}}|\cV}<\eps^4\qquad\mbox{ for all $\ell\geq\ell_*$}.
	\end{align}
Applying \Lem~\ref{Lemma_judicious} to $\underline\cM^\tensor$, we obtain $\chi_*=\chi_*(\eps,\ell_*)>0$ and
\Prop~\ref{Prop_tensorise} and \Lem~\ref{Lemma_psymmetric} imply that 
	\begin{align}\label{eqmyMartingale2Tensor}
	\lim_{n\to\infty}\pr\brk{\G\in\cS^\tensor(\chi_*)}&=1.
	\end{align}
Further, \Lem~\ref{Lemma_psymmetric} shows that for $l$ we have
	\begin{align}\label{eqmyMartingale3Tensor}
	\lim_{n\to\infty}\pr\brk{\G\in\cE^\tensor(\eps^4,l)}&
		=1.
	\end{align}
In effect, just as before (\ref{eqmyMartingale2Tensor}), (\ref{eqmyMartingale3Tensor}) and~(\ref{eqLocalWeakConvergence3}) show that large $l$,
	\begin{align}
	\lim_{n\to\infty}\pr\brk{\G\in\cS^\tensor(\chi_*)\cap\cL(\eps^4,l)\cap\cE^\tensor(\eps^4,l)}&=1\label{eqmyMartingale4Tensor}.
	\end{align}
Like in the upper bound proof we have
	$\cL(\eps^4,l)\cap\cE^\tensor(\eps^4,l)\subset
		\cL(\eps^4,l)\cap\cE^\tensor(\eps^3,\ell_*)$.
Therefore, (\ref{eqmyMartingale1Tensor}) and (\ref{eqmyMartingale4Tensor})  
show that the event 
$\cB^\tensor(\alpha)=\cS^\tensor(\chi_*)\cap\cL(\eps^4,\ell_*)\cap\cE^\tensor(\eps^3,\ell_*)$ 
satisfies
	\begin{align}
	\lim_{n\to\infty}\pr\brk{\G\in\cB^\tensor(\alpha)}&=1.\label{eqmyMartingale6Tensor}
	\end{align}

Define 
	$\cZ_{\alpha}(\G)=\vecone\{\G\in\cB^\tensor(\alpha)\cap\cA_{100\ell_*}\}Z(\G)$.
If $G\in\cB^\tensor(\alpha)\cap\cA_{100\ell_*}$, 
then by (\ref{eqTensorConstruction2}),
\Lem~\ref{Lemma_judicious} and the choice of $\chi_*$ we have
	\begin{align*}
	Z(G)^2&=Z(G^\tensor)\leq2Z(G^\tensor)
			\bck{\vecone\{(G^\tensor,\SIGMA)\mbox{ is $(\eps,\ell_*)$-judicious w.r.t.\ $p^\tensor$}\}}_{G^\tensor}.
	\end{align*}
Hence, we obtain an upper bound on the second moment of $\cZ_\alpha$, namely
	\begin{align}\label{eqImjudicious}
	\Erw[\cZ_{\alpha}(\G)^2]&\leq2
		\Erw\brk{\vecone\{\G\in\cL(\eps^4,\ell_*)\cap\cA_{100\ell_*}\}
			\bck{\vecone\{(\G^\tensor,\SIGMA)\mbox{ is $(\eps,\ell_*)$-judicious w.r.t.\ $p^\tensor$}\}}_{\G^\tensor}
			Z(\G^\tensor)}.
	\end{align}
Due to (\ref{eqBetheSquare}) and the choice of $\eps,\ell$ and because $\ell_*>\ell$,
\Cor~\ref{Cor_fmCalc} enables us to estimate the r.h.s.\ of (\ref{eqImjudicious}) explicitly, whence
	\begin{align}\label{eqProofThm_symUpperBound2}
	\Erw[\cZ_{\eps}(\G)^2]&\leq\exp(n(2\cB_\thet(p)+\alpha)).
	\end{align}

As a next step, we are going to 
show that
	\begin{align}\label{eqProofThm_symLowerBound1}
	\Erw[\cZ_{\eps}(\G)]\geq	\exp(n(\cB_\thet(p^\tensor)-2\alpha)).
	\end{align}
Indeed, by \Prop~\ref{Prop_tensorise}  and \Lem~\ref{Lemma_psymmetric} we have
	\begin{align}\label{eqmyMartingale2_lower}
	\lim_{n\to\infty}\pr\brk{\hat\G_l\in\cS^\tensor(\chi_*)}&=1
	\end{align}
 for large enough $l$.
Similarly, (\ref{eqTensorConstruction1}), the assumption that the planted distribution is $p$-symmetric and \Lem~\ref{Lemma_psymmetric} imply that  for $l$ large enough
	\begin{align}\label{eqmyMartingale3_lower}
	\lim_{n\to\infty}\pr\brk{\hat\G_l\in\cE^\tensor(\eps^4,l)}&=1.
	\end{align}
Hence, (\ref{eqmyMartingale2_lower}), (\ref{eqmyMartingale3_lower}), the local convergence assumption~(\ref{eqLocalWeakConvergence3})
and the construction (\ref{eqPlantedDistribution}) of the planted distribution imply that for $l$ large enough
	\begin{align}
	\lim_{n\to\infty}\pr\brk{\hat\G_l\in\cS^\tensor(\chi_*)\cap\cL(\eps^4,l)\cap\cE^\tensor(\eps^4,l)}&=1.\label{eqmyMartingale5_lower}
	\end{align}
Combining (\ref{eqmyMartingale1Tensor}) and (\ref{eqmyMartingale5_lower}) and using the high girth assumption, we thus obtain for large $l$ 
	\begin{align}
	\lim_{n\to\infty}\pr\brk{\hat\G_l\in\cB^\tensor(\alpha)
		}&=1.\label{eqmyMartingale7}
	\end{align}
Further, \Cor~\ref{Cor_fmCalc_lower} shows that
	\begin{align*}
	\lim_{n\to\infty}\pr\brk{
		\frac1n\ln\Erw\brk{Z(\G)|\cT_{l}}\geq\cB_\thet(p)-\alpha\bigg|\cA_{100l}}=1.
	\end{align*}
Thus, (\ref{eqmyMartingale7}) and \Prop~\ref{Prop_plantedModel} yield (\ref{eqProofThm_symLowerBound1}).

Finally, combining (\ref{eqProofThm_symUpperBound2}) and (\ref{eqProofThm_symLowerBound1}) and applying the Paley-Zygmund inequality, we obtain
	\begin{align}\label{eqProofThm_symLowerBound2}
	\pr\brk{Z(\G)\geq\exp(n(\cB_\thet(p)-4\alpha))}&\geq\pr\brk{\cZ_\eps(\G)\geq\exp(n(\cB_\thet(p)-4\alpha))}
		\geq\frac{\Erw[\cZ_\eps(\G)]^2}{2\Erw[\cZ_\eps(\G)^2]}\geq\exp(-10\alpha n).
	\end{align}
As this holds for any $\alpha>0$, the assertion follows from (\ref{eqProofThm_symLowerBound2}) and \Prop~\ref{Lemma_conc}.
\end{proof}

\subsection{Proof of \Thm~\ref{Thm_nonReconstruction}}\label{Sec_Thm_nonReconstruction}
The key step of the proof is to establish the following statement.

\begin{lemma}\label{Lemma_nonRe}
For any $\eps>0$ there exists $\delta>0$ such that for any $\ell>0$ there exists $n_0$ such that for all $n>n_0$ the following is true.
Assume that $G\in\cG(\cM_n)$ satisfies
	\begin{align}\label{eqThm_nonReconstruction1}
	\frac1n\sum_{x\in V_n}\bck{\TV{\bck{\SIGMA[\nix|x]|\nabla_\ell(G,x)}_\mu-p_{\ell,\partial^\ell[G,x]}}}_\mu<\delta^9.
	\end{align}
Then $G$ is $(\eps,2)$-symmetric and
	$\sum_{x\in V_n}\TV{\mu_{G\marg x}-p_{\ell,\partial^\ell[G,x]}}<\eps n$.
\end{lemma}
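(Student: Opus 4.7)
My plan is to handle the two conclusions separately. The marginal bound follows from Jensen's inequality in one line, while the $(\eps,2)$-symmetry will reduce, via the spatial Markov property of the Gibbs measure, to the pair case and then to the hypothesis (\ref{eqThm_nonReconstruction1}).

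For the marginal claim, the tower property gives $\mu_{G\marg x}(\omega)=\bck{\bck{\SIGMA[\omega|x]|\nabla_\ell(G,x)}_\mu}_\mu$, and applying Jensen to the convex map $q\mapsto\TV{q-p_{\ell,\partial^\ell[G,x]}}$ yields
\begin{align*}
\TV{\mu_{G\marg x}-p_{\ell,\partial^\ell[G,x]}}\leq\bck{\TV{\bck{\SIGMA[\nix|x]|\nabla_\ell(G,x)}_\mu-p_{\ell,\partial^\ell[G,x]}}}_\mu.
\end{align*}
Summing over $x$ and invoking (\ref{eqThm_nonReconstruction1}) yields $\sum_x\TV{\mu_{G\marg x}-p_{\ell,\partial^\ell[G,x]}}<\delta^9 n<\eps n$ once $\delta\leq\eps^{1/9}$.

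For $(\eps,2)$-symmetry, I would first fix a pair $(x,y)$ of variable nodes at graph distance strictly greater than $\ell+1$. The variables pinned by $\nabla_\ell(G,x)$---those at the unique distance in $\{\ell,\ell+1\}$ of the correct parity to be variable nodes---then form a separator between $x$ and $y$ in the bipartite factor graph, and the factorisation $\mu_G\propto\prod_a\psi_a$ splits accordingly into a product over constraints ``on the $x$-side'' and constraints ``on the $y$-side''. The resulting conditional independence reads
\begin{align*}
\bck{\SIGMA[\omega_1|x]\SIGMA[\omega_2|y]|\nabla_\ell(G,x)}_\mu=\bck{\SIGMA[\omega_1|x]|\nabla_\ell(G,x)}_\mu\cdot\bck{\SIGMA[\omega_2|y]|\nabla_\ell(G,x)}_\mu.
\end{align*}
Taking the outer $\bck{\nix}_\mu$, using $\sum_{\omega_2}\bck{\SIGMA[\omega_2|y]|\nabla_\ell(G,x)}_\mu=1$, and summing over $\omega_1,\omega_2$ produces
\begin{align*}
\TV{\mu_{G\marg\{x,y\}}-p_{\ell,\partial^\ell[G,x]}\tensor\mu_{G\marg y}}\leq\bck{\TV{\bck{\SIGMA[\nix|x]|\nabla_\ell(G,x)}_\mu-p_{\ell,\partial^\ell[G,x]}}}_\mu.
\end{align*}
A final triangle inequality against $\mu_{G\marg x}\tensor\mu_{G\marg y}$, controlled by the marginal estimate of the previous paragraph, doubles the right-hand side.

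To close out, I would split the average $\frac1{n^2}\sum_{x,y}\TV{\mu_{G\marg\{x,y\}}-\mu_{G\marg x}\tensor\mu_{G\marg y}}$ according to whether $y$ lies within graph distance $\ell+1$ of $x$. The bounded-degree assumption caps the number of close pairs at $n\Delta^{\ell+2}$, so they contribute only $O(\Delta^{\ell+2}/n)$, which is negligible once $n_0=n_0(\ell,\Delta,\eps)$ is large enough. The far pairs contribute at most $\frac{2}{n}\sum_x\bck{\TV{\bck{\SIGMA[\nix|x]|\nabla_\ell(G,x)}_\mu-p_{\ell,\partial^\ell[G,x]}}}_\mu\leq 2\delta^9$ by the per-pair bound and (\ref{eqThm_nonReconstruction1}), so choosing $\delta$ with $2\delta^9<\eps/2$ finishes the argument. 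The only nonroutine step is the Markov/separator splitting in the bipartite factor-graph setting, but this is a direct Hammersley--Clifford-type observation: once $\nabla_\ell(G,x)$ is conditioned on, no constraint $\psi_a$ can couple the $x$-component to the $y$-component. Everything else is three applications of the triangle inequality.
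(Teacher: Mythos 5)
Your proof is correct, and it follows a genuinely different and substantially shorter route than the paper's. The paper invokes the decomposition Theorem~\ref{Thm_decomp}: it produces a $\gamma^4$-homogeneous partition and shows, by contradiction, that every large $\gamma^4$-regular state $S_j$ must have conditional marginals close to $p_{\ell,\cdot}$; the contradiction is supplied by a Chernoff-type bound applied to a set $L$ of $\lceil1/\delta\rceil$ variables at pairwise distance at least $10(\ell+1)$, whose spins are conditionally independent given their boundaries, and the two conclusions then drop out via Bayes and Lemma~\ref{Lemma_regularSymmetric}. You bypass the regularity machinery entirely and use the spatial Markov property of the Gibbs factorisation directly: conditioning on $\nabla_\ell(G,x)$ decorrelates $x$ from any $y$ at distance $>\ell+1$ --- the one-line parity check is that an adjacent variable--constraint pair has distances from $x$ differing by exactly one, so no constraint can have one neighbour strictly inside and another strictly outside the pinned shell --- and hence each far pairwise TV distance is bounded by twice the single-site conditional deviation, which is exactly what (\ref{eqThm_nonReconstruction1}) controls in the mean. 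Both proofs ultimately rest on the same separator fact, but the paper uses it indirectly inside a concentration argument conducted state-by-state, whereas you use it directly on the pairwise marginals of $\mu_G$; this buys you a one-page argument with no appeal to Theorem~\ref{Thm_decomp}. The quantifier structure also checks out: your $\delta$ is fixed from $\eps$ alone (choose $2\delta^9<\eps/2$), and $n_0$ absorbs $\ell$ and $\Delta$ through the $O(\Delta^{\ell+2}/n)$ correction for nearby pairs, matching the order of quantifiers in the statement.
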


\noindent
Before we prove \Lem~\ref{Lemma_nonRe} let us show how it implies \Thm~\ref{Thm_nonReconstruction}.

\begin{proof}[Proof of \Thm~\ref{Thm_nonReconstruction}]
If $G\in\cG(\cM_n)$ satisfies is $(\eps,2)$-symmetric and $\sum_{x\in V_n}\TV{\mu_{G\marg x}-p_{\ell,\partial^\ell[G,x]}}<\eps n$, then by the triangle inequality
	\begin{align*}
	\sum_{x,y\in V_n}\TV{\mu_{G\marg\{x,y\}}-p_{\ell,\partial^\ell[G,x]}\tensor p_{\ell,\partial^\ell[G,y]}}&
		\leq\sum_{x,y\in V_n}\TV{\mu_{G\marg\{x,y\}}-\mu_{G\marg x}\tensor\mu_{G\marg y}}+
		\TV{\mu_{G\marg x}\tensor\mu_{G\marg y}-p_{\ell,\partial^\ell[G,x]}\tensor p_{\ell,\partial^\ell[G,y]}}\\
		&\leq4\eps n^2.
	\end{align*}
Therefore, the theorem follows by applying \Lem~\ref{Lemma_nonRe} either to the random factor graph $\G$ or
to the random factor graph $\G_\ell$ chosen from the planted model.
\end{proof}

\begin{proof}[Proof of \Lem~\ref{Lemma_nonRe}]
Let $\gamma=\gamma(\eps)>0$ be sufficiently small.
By \Thm~\ref{Thm_decomp} we can pick $\delta=\delta(\gamma)>0$ small enough so that there exists a partition
$(\vec V,\vec S)$ with $\#\vec V+\#\vec S<\delta^{-1}$ with respect to which $\mu_G$ is $\gamma^4$-homogeneous.
Suppose that $V_i$, $S_j$ are classes such that $|V_i|\geq\delta^{3/2}n$, $\mu_G(S_j)\geq\delta^{3/2}$
and such that $\mu[\nix|S_j]$ is $\gamma^4$-regular on $V_i$.
We claim that
	\begin{align}\label{eqMyContradiction}
	\frac1{|V_i|}\sum_{x\in V_i}\TV{\mu_{G\marg x}[\nix|S_j]-p_{\ell,\partial^\ell[G,x]}}&<3\gamma.
	\end{align}
The assertion is immediate from this inequality.
Indeed, suppose that (\ref{eqMyContradiction}) is true for all $i,j$ such that $|V_i|\geq\delta^{3/2}n$, $\mu_G(S_j)\geq\delta^{3/2}$
such that $\mu[\nix|S_j]$ is $\gamma^4$-regular on $V_i$.
Then 
 because $\#\vV+\#\vS\leq1/\delta$
	\begin{align}\label{eqMyContradiction666}
	\sum_{x\in V_n}\TV{\mu_{G\marg x}[\nix|S_j]-p_{\ell,\partial^\ell[G,x]}}<4\gamma n.
	\end{align}
Hence, by {\bf HM1} and Bayes' rule,
$\sum_{x\in V_n}\TV{\mu_{G\marg x}-p_{\ell,\partial^\ell[G,x]}}<5\gamma n<\eps n$.
Further, (\ref{eqMyContradiction666}) and \Lem~\ref{Lemma_regularSymmetric} imply that $\mu_G$ is $(\eps,2)$-regular (provided that
we pick $\gamma$ small enough).
Thus, we are left to prove (\ref{eqMyContradiction}).

Assume for contradiction that (\ref{eqMyContradiction}) is violated
for $V_i$, $S_j$ such that $|V_i|\geq\delta^{3/2}n$, $\mu_G(S_j)\geq\delta^{3/2}$.
Then  by the triangle inequality there is a set $W\subset V_i$ of size at least $\gamma|V_i|$ such that for all $x\in W$ we have
	\begin{align*}
	\TV{\mu_{G\marg x}[\nix|S_j]-p_{\ell,\partial^\ell[G,x]}}&\geq\gamma.
	\end{align*}
For $x\in W$ pick $\omega_x\in\Omega$ such that
	$|\mu_{G\marg x}[\omega_x|S_j]-p_{\ell,\partial^\ell[G,x]}|\geq\gamma$ is maximum.
Then by the pigeonhole principle there exist $\omega\in\Omega$ and $W'\subset W$, $|W'|\geq|W|/(2|\Omega|)$, such that either
	\begin{align}\label{eqLeadAstray1_prime}
	\forall x\in W':\mu_{G\marg x}[\omega|S_j]\geq p_{\ell,\partial^\ell[G,x]}(\omega)+\gamma
		&\qquad\mbox{or}\\
	\forall x\in W':\mu_{G\marg x}[\omega|S_j]\leq p_{\ell,\partial^\ell[G,x]}(\omega)-\gamma
			\label{eqLeadAstray2}
	\end{align}
In particular we have
	\begin{align}\label{eqLeadAstray1}
	\forall x\in W':\mu_{G\marg x}[\omega|S_j]\geq p_{\ell,\partial^\ell[G,x]}(\omega)+\gamma/|\Omega|
	\end{align}

We claim that there is a set $L\subset W'$ of size $|L|=\lceil1/\delta\rceil$ with the following properties.
	\begin{enumerate}[(i)]
	\item the pairwise distance between any two $x,y\in L$ is at least $10(\ell+1)$.
	\item for all $x\in L$ we have
		\begin{align}\label{eqW''}
		\bck{\TV{\bck{\SIGMA[\nix|x]|\nabla_\ell(G,x)}_{G}-p_{\ell,\partial^\ell[G,x]}}}_{\mu_G}<\delta^4.
		\end{align}
	\end{enumerate}
Indeed,
because $|V_i|\geq\delta^2n$ and $\mu(S_j)\geq\delta^2$ the assumption (\ref{eqThm_nonReconstruction1}) implies that
	\begin{align}\label{eqThm_nonReconstruction2}
	\sum_{x\in V_i}\bck{\TV{\bck{\SIGMA[\nix|x]|\nabla_\ell(G,x)}_{G}-p_{\ell,\partial^\ell[G,x]}}}_{\mu_G[\nix|S_j]}<\delta^5|V_i|.
	\end{align}
Since $|W'|\geq\gamma|V_i|/|\Omega|\geq\delta|V_i|$,
 (\ref{eqThm_nonReconstruction2}) implies that there is a set $W''\subset W'$ of size $|W''|\geq|W'|/2$ such that (\ref{eqW''}) holds for all $x\in W''$.
Now, construct a sequence $W''=W_0''\supset W_1''\cdots$ inductively as follows.
In step $i\geq1$ pick some $x_i\in W_{i-1}''$.
Then $W_i''$ contains $x_i$ and all $y\in W_{i-1}''\setminus\{x_i\}$ whose distance from $x_i$ is greater than $10(\ell+1)$.
Since for each $x_i$ the total number of variable nodes at distance at most $10(\ell+1)$ is bounded by $\Delta^{10(\ell+1)}$ and
	$|W_0''|\geq\delta|V_i|/2\geq\delta^3n/2$, the set $\bigcap_{i\geq1}W_i''$ has size at least $\delta^3\Delta^{-10(\ell+1)}n/2>1/\delta$,
provided that $n$ is large enough.
Finally, simply pick any subset $L\subset\bigcap_{i\geq1}W_i''$ of size $|L|=\lceil1/\delta\rceil$.

Consider the event $\cE=\cbc{\SIGMA[\omega|L]\geq|L|^{-1}\sum_{x\in L}p_{\ell,\partial^\ell[G,x]}+\gamma^3}.$
We claim that
		\begin{align}\label{eqfrakL4}
		\mu_G[\cE|S_j]\leq2\delta^2.
		\end{align}
Indeed, by (\ref{eqW''}) and the union bound we have
	\begin{align}\nonumber
		\bck{\vecone\cbc{\forall x\in L:\TV{\bck{\SIGMA[\nix|x]|\nabla_\ell(G,x)}_{G}-p_{\ell,\partial^\ell[G,x]}}\leq\delta}}_{\mu_G}&\geq
			1-\sum_{x\in L}\bck{\vecone\cbc{\TV{\bck{\SIGMA[\nix|x]|\nabla_\ell(G,x)}_{G}-p_{\ell,\partial^\ell[G,x]}}>\delta}}_{\mu_G}\\
			&\geq1-\delta^2.
				\label{eqfrakL1}
	\end{align}
Now, let $\mathfrak L$ be the coarsest $\sigma$-algebra such that $\mathfrak L\supset\nabla_\ell(G,x)$ for all $x\in L$.
Suppose that $\sigma\in S_j$ is such that
	\begin{align}\label{eqfrakL}
	\TV{\bck{\SIGMA[\nix|x]|\nabla_\ell(G,x)}_{G}(\sigma)-p_{\ell,\partial^\ell[G,x]}}\leq\delta\quad\mbox{ for all $x\in L$}.
	\end{align}
We claim that (\ref{eqfrakL}) implies
	\begin{align}\label{eqfrakL2}
	\bck{\vecone\{\SIGMA\in\cE\}|\mathfrak L}_{G}(\sigma)&<\delta^3.
	\end{align}
Indeed, let $X=\sum_{x\in L}\vecone\{\SIGMA(x)=\omega\}$.
Then (\ref{eqfrakL}) implies that 
		\begin{align}\label{eqfrakL3}
		\bck{X(\SIGMA)|\mathfrak L}(\sigma)\leq2\delta|L|+\sum_{x\in L}p_{\ell,\partial^\ell[G,x]}(\omega).
		\end{align}
Furthermore, the pairwise distance of the variables in $L$ is at least $2(\ell+1)$ and given $\mathfrak L$
the values of the variables at distance either $\ell$ or $\ell+1$ from each $x\in L$ are fixed.
Therefore, given  $\mathfrak L$ the events $\{\SIGMA(x)=\omega\}$ are mutually independent.
In effect, $X$ is stochastically dominated by a sum of independent random variables.
Hence, recalling that $\delta$ is much smaller than $\gamma$, we see that (\ref{eqfrakL2}) follows from (\ref{eqfrakL3}) and the Chernoff bound.
Finally, combining (\ref{eqfrakL1}) and (\ref{eqfrakL2}) we obtain (\ref{eqfrakL4}).

But (\ref{eqfrakL4}) does not sit well with (\ref{eqLeadAstray1}).
In fact, (\ref{eqLeadAstray1}) entails that  
$\mu_G[\cE|S_j]\geq\gamma^2$;
for consider the random variable $Y=\sum_{x\in L}\vecone\{\SIGMA(x)\neq\omega\}$.
Then (\ref{eqLeadAstray1}) yields 
	$\bck{Y}_{\mu[\nix|S_j]}\leq\sum_{x\in L}(1-\mu_{G \marg x }[\omega|S_j]) \leq|L|(1-\gamma/|\Omega|)-\sum_{x\in L}p_{\ell,\partial^\ell[G,x]}(\omega)$.
Hence, by Markov's inequality
	$$1-\mu_G[\cE|S_j]\leq\frac{\bck{Y}_{\mu[\nix|S_j]}}{|L|(1-\gamma^3)-\sum_{x\in L}p_{\ell,\partial^\ell[G,x]}(\omega)}
		\leq\frac{|L|(1-\gamma/|\Omega|)-\sum_{x\in L}p_{\ell,\partial^\ell[G,x]}(\omega)}{|L|(1-\gamma^3)-\sum_{x\in L}p_{\ell,\partial^\ell[G,x]}(\omega)}
			\leq\frac{1-\gamma/|\Omega|}{1-\gamma^3}\leq1-\gamma^2.$$
Combining this bound with (\ref{eqfrakL4}), we obtain
	$\gamma^2\leq{\mu_G(\cE)}/{\mu_G(S_j)}\leq2\delta^2/\mu_G(S_j)$.
Thus, choosing $\delta$ much smaller than $\gamma$, we conclude that $\mu_G(S_j)<\delta^{3/2}$, which is a contradiction.
Thus, we have established that (\ref{eqMyContradiction}).
\end{proof}

\section{Conditioning on the local structure}\label{Sec_crazyConfigs}

\subsection{A generalised configuration model}
The aim in this section is to prove \Prop~\ref{Lemma_fmCalc}.
The obvious problem is the conditioning on the $\sigma$-algebra $\cT_\ell$ that fixes the depth-$\ell$ neighborhoods
of all variable nodes and the depth-$\ell+1$ neighborhoods of all constraint nodes.
Following~\cite{BordenaveCaputo}, we deal with this  conditioning by setting up a generalised configuration model.

Recall that $\fT_\ell$ is the (finite) set of all isomorphism classes $\partial^\ell T$ for $T\in\fT\cap\cV$ and $\partial^{\ell+1} T$ for $T\in\fT\cap\cV$.
Let $\ell,n>0$ be integers and let $\cM=(V,F,d,t,(\psi_a)_{a\in F})$ be a $(\Delta,\Omega,\Psi,\Theta)$-model of size $n$.
Moreover, let $G\in\cG(\cM)$ be a $100\ell$-acyclic factor graph.
Then we define an enhanced $(\Delta,\Omega,\Psi,\Theta_\ell)$-model  $\cM(G,\ell)$ with type set $\Theta_\ell=(\fT_\ell\cap\cV)\times[\Delta]$ as follows.
The set of variable nodes is $V$, the set of constraint nodes is $F$, the degrees are given by $d$ and the weight function
associated with each constraint $a$ is $\psi_a$ just as in $\cM$.
Moreover, the type of a variable clone $(x,i)$ is 
	$t_{G,\ell}(x,i)=(\partial^\ell[G,x],i)$.
Further, the type of a constraint clone $(a,j)$ such that $\partial(G,a,j)=(x,i)$ is $t_{G,\ell}(a,j)=(\partial^\ell[G,x],i)$.
Clearly, $\cG(\cM(G,\ell))\subset\cG(\cM)$.
The following lemma shows that the model $\cM(G,\ell)$ can be used to generate factor graphs whose local structure coincides with that of $G$.

\begin{lemma}\label{Lemma_beMyType}
Assume that $\ell\geq0$ and that $G'\in\cG(\cM(G,\ell))$ is $2\ell+4$-acyclic.
Then $G'$ viewed as a $\cM$-factor graph satisfies $G\ism_\ell G'$.
\end{lemma}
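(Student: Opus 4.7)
The plan is to exhibit, for each variable $x \in V$ and each constraint $a \in F$, isomorphisms $\partial^\ell[G,x] = \partial^\ell[G',x]$ and $\partial^{\ell+1}[G,a] = \partial^{\ell+1}[G',a]$ of rooted $(\Delta,\Omega,\Psi,\Theta)$-templates. The $100\ell$-acyclicity of $G$ and the $(2\ell+4)$-acyclicity of $G'$ guarantee that both sides are honest rooted trees at the relevant depths, so the task reduces to a layer-by-layer match by BFS from the root.

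The engine is the following type-matching identity, which I intend to iterate. Suppose that the variable clone $(x,i)$ and the constraint clone $(a,j)$ are matched in $G'$. Because $G' \in \cG(\cM(G,\ell))$, we have $t_{G,\ell}(x,i) = t_{G,\ell}(a,j)$, i.e.\ $(\partial^\ell[G,x],i) = (\partial^\ell[G,y],k)$, where $(y,k) := \partial(G,a,j)$. Thus $k = i$ and $\partial^\ell[G,y] = \partial^\ell[G,x]$; any isomorphism $\phi$ witnessing the latter with $\phi(y) = x$ must send the clone $(a,j)$ to the clone $(\phi(a),j)$ attached to $(x,i)$ in $\partial^\ell[G,x]$. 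Since the unique $G$-neighbor of $(x,i)$ is $(a_i,j_i) := \partial(G,x,i)$, we deduce $j = j_i$ and $\phi(a) = a_i$. In particular $\partial^{\ell-1}[G,a] = \partial^{\ell-1}[G,a_i]$ as pointed constraint templates, and the clone index on the constraint side in $G'$ agrees with the one used in $G$. The symmetric statement starting from a constraint clone holds by the same token.

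With this identity in hand, I will induct on $d = 0,1,\ldots,\ell$ to prove that $\partial^d[G,v] = \partial^d[G',v]$ for every $v \in V \cup F$, running one extra step on the constraint side to reach depth $\ell+1$. The base case $d = 0$ is immediate, since a depth-$0$ template records only the root's degree and the $\Theta$-types of its clones, all $\cM$-data common to $G$ and $G'$. For the inductive step at a variable $x$, set $(a_i,j_i) = \partial(G,x,i)$ and $(a_i',j_i') = \partial(G',x,i)$; the identity above gives $j_i' = j_i$ and $\partial^{\ell-1}[G,a_i'] = \partial^{\ell-1}[G,a_i]$, which truncates to $\partial^d[G,a_i'] = \partial^d[G,a_i]$ for $d \leq \ell-1$. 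Composing with the inductive hypothesis $\partial^d[G',a_i'] = \partial^d[G,a_i']$ yields $\partial^d[G',a_i'] = \partial^d[G,a_i]$, and gluing at $(x,i)$ through the shared clone index $j_i$ produces the desired depth-$(d+1)$ isomorphism at $x$. The constraint case is handled symmetrically, with one further step upgrading constraint roots to depth $\ell+1$. The main obstacle is the book-keeping of clone indices so that \textbf{ISM6} is maintained at every matched pair; this is exactly what the type-matching identity takes care of, while the acyclicity ensures that the two branches of the locally constructed tree never collide onto a common vertex of $G'$, so the layerwise isomorphisms glue unambiguously.
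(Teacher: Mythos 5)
Your proposal is correct and follows essentially the same strategy as the paper's proof: an induction on depth driven by the observation that $G'$ preserves the types $t_{G,\ell}$, which in turn encode the depth-$\ell$ balls of $G$, so that the local structure of $G'$ is forced to agree with that of $G$ layer by layer (with the high girth of $G$ and $G'$ guaranteeing that the relevant neighborhoods are trees and the isomorphisms glue). You merely make the type-matching identity and the clone-index bookkeeping more explicit than the paper's rather terse one-paragraph argument.
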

\begin{proof}
We are going to show inductively for $l\in[\ell]$ that $G\ism_l G'$.
The case $l=0$ is immediate from the construction.
Thus, assume that $l>0$, let $(x,i)\in C_V$ and let $B$ be the set of all clones that have distance precisely $l-1$ from $(x,i)$.
Since $G'$ is $(2\ell+2)$-acyclic, the pairwise distance of any two clones in $B$ is at least $2$.
Moreover, by induction we know that $t_{G,1}(w,j)=t_{G',1}(w,j)$ for all $(w,j)\in B$.
Therefore, $t_{G,l}(x,i)=t_{G',l}(x,i)$.
\end{proof}

In order to prove \Prop~\ref{Lemma_fmCalc} we need to enhance the model $\cM(G,\ell)$ further to accommodate an assignment
that provides a value from $\Omega$ for each clone.
Thus, let $\hat\sigma:C_V\cup C_F\to\Omega$ be a map.
We call $\hat\sigma$ {\bem valid} if $\hat\sigma(x,i)=\hat\sigma(x,j)$ for all $x\in V$, $i,j\in[d(x)]$
and if for all $\theta\in\Theta_\ell$ we have
	$$\forall\omega\in\Omega:
		\abs{\cbc{(x,i)\in C_V:\hat\sigma(x,i)=\omega,t_{G,\ell}(x,i)=\theta}}
		=\abs{\cbc{(a,j)\in C_F:\hat\sigma(a,j)=\omega,t_{G,\ell}(a,j)=\theta}}.$$
Of course, we can extend a valid $\hat\sigma$ to a map $V\to\Omega$, $x\mapsto\hat\sigma(x,1)$.
Given a valid $\hat\sigma$
we define a model $(\Delta,\Omega,\Psi,\Theta_\ell\times\Omega)$-model $\cM(G,\hat\sigma,\ell)$ with
variable nodes $V$, constraint nodes $F$, degrees $d$ and weight functions $(\psi_a)_{a\in F}$ such that the type $t_{G,\hat\sigma,\ell}(x,i)$
of a variable clone $(x,i)$ is $(\partial^\ell[G,x],i,\hat\sigma(x,i))$ and such that the type
$t_{G,\hat\sigma,\ell}(a,j)$ of a constraint clone $(a,j)$ with $\partial(G,a,j)=(x,i)$
is $(\partial^\ell[G,x],i,\hat\sigma(a,j))$.
By construction, 
 $\cG(\cM(G,\hat\sigma,\ell))\subset\cG(\cM(G,\ell))\subset\cG(\cM)$.
Let us recall the definition of the distance from (\ref{eqDist}).
Further, for two maps $\hat\sigma,\hat\sigma':C_V\cup C_F\to\Omega$
let $\dist(\hat\sigma,\hat\sigma')=|\cbc{(v,i)\in C_V\cup C_F:\hat\sigma(v,i)\neq\hat\sigma'(v,i))}|$.
In \Sec~\ref{Sec_getBack} we are going to establish the following.

\begin{lemma}\label{Lemma_getBack}
For any $\eps,\ell>0$ there is $n_0=n_0(\eps,\ell,\Delta,\Omega,\Psi,\Theta)$ such that for $n>n_0$ the following holds.
If $\cM$ is a $(\Delta,\Omega,\Psi,\Theta)$-model of size $n$, $G\in\cG(\cM)$ is $100\ell$-acyclic
and $\hat\sigma$ is valid, then with probability at least $1-\eps$ the random factor graph $\G(\cM(G,\hat\sigma,\ell))$ has the following property.
There exist a valid $\hat\sigma'$ and a $4\ell$-acyclic $G'\in\cG(\cM(G,\hat\sigma',\ell))$ such that
	$\dist(\hat\sigma,\hat\sigma')+
		\dist(G',\G(\cM(G,\hat\sigma,\ell)))\leq n^{0.9}.$
\end{lemma}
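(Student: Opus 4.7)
The plan is a surgery argument: first bound the number of short cycles in $\G=\G(\cM(G,\hat\sigma,\ell))$, then edit $\G$ and $\hat\sigma$ locally to destroy them. First I would carry out a first-moment calculation for the number of cycles of length at most $4\ell$ in the generalized configuration model underlying $\cM(G,\hat\sigma,\ell)$. Because all degrees are bounded by $\Delta$ and because the extended type set $\Theta_\ell\times\Omega$ has size bounded in terms of $\Delta,\Omega,\Psi,\Theta,\ell$ only, the expected number of cycles of length $L$ is $O(1)$ uniformly in $n$ (type classes containing only $o(n)$ clones contribute negligibly since a cycle traversing such a class carries an extra small factor). Markov's inequality then gives that, with probability at least $1-\eps/2$, the number of short cycles in $\G$ is at most $n^{0.1}$.

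Given these at most $r\le n^{0.1}$ short cycles $c_1,\dots,c_r$, the second step is to pick one edge $e_i$ from each, let $F=\{e_1,\dots,e_r\}$, and let $\cC$ be the set of clones incident to $F$, so $|\cC|=O(n^{0.1})$. Deleting $F$ from $\G$ leaves a $4\ell$-acyclic partial factor graph with $|\cC|/2$ unmatched variable-clones and $|\cC|/2$ unmatched constraint-clones. To rematch them in a type-preserving manner I would adjust $\hat\sigma$ only on the $O(r)$ variables whose clones appear in $\cC$: for each type $\theta\in\Theta_\ell\times\Omega$ one rebalances the counts of freed variable- and constraint-clones of type $\theta$ by reassigning $\hat\sigma$ on these variables. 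Setting $\hat\sigma'$ equal to $\hat\sigma$ elsewhere yields a valid $\hat\sigma'$ with $\dist(\hat\sigma,\hat\sigma')=O(r)$. Finally I extend this to a full factor graph $G'\in\cG(\cM(G,\hat\sigma',\ell))$ by choosing a uniformly random type-preserving matching of the freed clones.

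A second first-moment calculation, restricted to potential short cycles that involve at least one new edge, shows that such a random completion creates a cycle of length at most $4\ell$ with probability $O(r/n^{c})$ for some $c>0$, so with probability at least $1-\eps/4$ the graph $G'$ is $4\ell$-acyclic. Combining this with $\dist(\hat\sigma,\hat\sigma')+\dist(\G,G')=O(n^{0.1})\ll n^{0.9}$ for $n$ sufficiently large completes the argument.

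The main obstacle will be the interplay between type preservation and avoidance of new short cycles. The adjustment of $\hat\sigma'$ is forced because $F$ generically removes unequal numbers of type-$\theta$ variable- and constraint-clones for different $\theta$, and we need the freed clone counts on the two sides to balance so that a type-preserving rematching exists at all. Verifying that such a rebalancing is always achievable within the $O(n^{0.1})$ edit budget, and that the random matching on the freed clones still retains enough entropy afterward to avoid generating new short cycles, is the technical heart of the argument; both rely on the fact that most type classes in $\Theta_\ell\times\Omega$ contain a linear-in-$n$ number of clones, which in turn follows from $|\Theta_\ell\times\Omega|$ being bounded independently of $n$ while $\G$ has $\Theta_{\Delta}(n)$ clones in total.
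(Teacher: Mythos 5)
Your first-moment bound is actually correct --- the expected number of cycles of length $\le 4\ell$ in the generalised configuration model is $O(1)$, because for any cyclic type pattern $\tau_1,\ldots,\tau_{2k}$ the count is at most $\prod_i\min(n_{\tau_i},n_{\tau_{i+1}})/\prod_i n_{\tau_i}\le 1$ --- and your observation in step~4 is a non-issue (each deleted edge frees one variable clone and one constraint clone of the \emph{same} type, so the free-clone counts balance automatically and no rebalancing of $\hat\sigma$ is needed). The genuine gap is in step~5. If a deleted edge $(v,f)$ has a type $\theta$ with few clones and is, say, the only freed pair of that type, the random type-preserving rematching of the freed clones is \emph{forced} to re-create $(v,f)$, and the original short cycle reappears with probability one. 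More generally, when $n_\theta$ is small the probability of re-creating the deleted edge is bounded away from zero, so the ``probability $O(r/n^c)$'' bound you claim does not hold. Your justification --- that ``most type classes contain a linear-in-$n$ number of clones'' because $|\Theta_\ell\times\Omega|$ is bounded --- is false: boundedness of the type set only forces the \emph{average} class size to be linear; there is nothing preventing many classes of size $\omega(1)\cap o(n)$, and those are exactly the troublemakers.

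This is also why you never use the hypothesis that $G$ is $100\ell$-acyclic, which should be a warning sign. The paper's proof (Lemmas~5.3--5.5) handles the small classes by a pigeonhole step that finds a threshold $n^\gamma$ with a multiplicative gap in the type-size distribution, declares types below it \emph{rare} and above it \emph{common}, and then \emph{deterministically} re-wires all clones within distance $10\ell$ of a rare clone so that they agree with $G$. Because $G$ is $100\ell$-acyclic, this surgery puts all rare-type clones onto trees, so every remaining short cycle consists entirely of common-type edges, which have at least $n^{\gamma+\beta}$ clones available for swaps. The remaining short cycles are then destroyed one at a time by a swap with far-away common clones of the same type, which is deterministic rather than a random rematching; the budget works out to $n^\gamma\ln^{O(1)}n<n^{0.9}$. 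To repair your proposal you would need to incorporate something analogous: either pin the rare-type edges back to $G$ before rematching, or argue separately that no short cycle consists entirely of rare-type edges (which I do not see how to do in general), and then restrict the deletion step to common-type edges of each remaining cycle.
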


To proceed consider a $(G,\ell)$-marginal sequence $q$.
We call $\hat\sigma$ {\bem $q$-valid} if the following two conditions hold.
\begin{description}
\item[V1] For all $T\in\fT_\ell\cap\cV,\omega\in\Omega$ we have
	\begin{align*}
	\abs{\cbc{x\in V:\partial^\ell[G,x]=T,\hat\sigma(x)=\omega}}&=q_T(\omega)\abs{\cbc{x\in V:\partial^\ell[G,x]=T}}.
	\end{align*}
\item[V2] For all $T\in\fT_\ell\cap\cF,\omega_1,\ldots,\omega_{d_F}\in\Omega$ we have
	\begin{align*}
	\abs{\cbc{a\in F:\partial^{\ell+1}[G,a]=T,\forall j\in[d_F]:\hat\sigma(a,j)=\omega_j}}&=q_T(\omega_1,\ldots,\omega_{d_T})
		\abs{\cbc{a\in F:\partial^{\ell+1}[G,a]=T}}.
	\end{align*}
\end{description}

\begin{lemma}\label{Lemma_countingConfigurations}
For any $\eps,\ell>0$ there is $n_0=n_0(\eps,\ell,\Delta,\Omega,\Psi,\Theta)$ such that for $n>n_0$ the following holds.
Assume that $\cM$ is a $(\Delta,\Omega,\Psi,\Theta)$-model of size $n$, $G\in\cG(\cM)$ is $100\ell$-acyclic and $q$ is a 
$(G,\ell)$-marginal sequence such that there exists a $q$-valid $\hat\sigma$.
Then with the sum ranging over all $q$-valid $\hat\sigma$ we have
	\begin{align*}
	\exp\bc{n\cB_G(\ell,q)-\sqrt n}\leq\sum_{\hat\sigma}\frac{|\cG(\cM(G,\hat\sigma,\ell))|}{|\cG(\cM(G,\ell))|}
		\leq\exp\bc{n\cB_G(\ell,q)+\sqrt n}.
	\end{align*}
\end{lemma}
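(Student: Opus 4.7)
The lemma is a purely combinatorial statement, and my plan rests on the configuration-model identity $|\cG(\cM')|=\prod_\theta N_\theta!$, valid for every $(\Delta,\Omega,\Psi,\Theta')$-model by (\ref{eqTypeSum}) because a factor graph is a type-preserving bijection between $C_V$ and $C_F$. I would apply this to $\cM(G,\ell)$ (types $(T,i)$ with $T\in\fT_\ell\cap\cV$, $i\in[d_T]$, so $N_{(T,i)}=|V_T|$) and to $\cM(G,\hat\sigma,\ell)$ (refined types $(T,i,\omega)$), so that for every valid $q$-valid $\hat\sigma$ we have $N_{(T,i,\omega)}=q_T(\omega)|V_T|$ on both sides. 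Stirling's formula then delivers
$$\ln\frac{|\cG(\cM(G,\hat\sigma,\ell))|}{|\cG(\cM(G,\ell))|}=\ln\prod_{(T,i)}\frac{\prod_\omega(q_T(\omega)|V_T|)!}{|V_T|!}=-\sum_{T\in\fT_\ell\cap\cV}d_T|V_T|H(q_T)+O(\log n),$$
the same value for every such $\hat\sigma$, so the lemma reduces to counting valid $q$-valid $\hat\sigma$'s and multiplying by this ratio.

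To count them, I decouple $\hat\sigma|_{C_V}$ from $\hat\sigma|_{C_F}$. The former is a function $\sigma\colon V\to\Omega$ (validity forces $\hat\sigma$ to be constant on the clones of each variable), and {\bf V1} is a product of multinomials $\prod_T\binom{|V_T|}{(q_T(\omega)|V_T|)_\omega}\approx\exp(\sum_T|V_T|H(q_T))$, one per variable-rooted tree. The latter is a free labeling of the constraint clones, and {\bf V2} is a product of multinomials $\prod_{T'}\binom{|F_{T'}|}{(q_{T'}(\vec\omega)|F_{T'}|)_{\vec\omega}}\approx\exp(\sum_{T'}|F_{T'}|H(q_{T'}))$, one per constraint-rooted tree. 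The per-slot type-and-value consistency needed for $\hat\sigma$ to be valid is a condition on $q$ alone that is forced by the standing hypothesis that a $q$-valid $\hat\sigma$ exists; so the two multinomial counts decouple and multiply. Combining with the ratio above collects all entropy contributions into
$$\sum_T(1-d_T)|V_T|H(q_T)+\sum_{T'}|F_{T'}|H(q_{T'})+O(\log n)=n\cB_G(\ell,q)+O(\log n),$$
with cumulative Stirling slack $O(|\fT_\ell|\log n)=O_\ell(\log n)\ll\sqrt n$, closing both inequalities in the lemma for $n\geq n_0(\eps,\ell)$.

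The main obstacle will be the per-slot validity bookkeeping. Condition {\bf MS3} only enforces the slot marginals $q_{T'\marg j}$ to coincide with $q_{\partial^\ell[T'\reroot j]}$ on average over pairs $(T',j)$ with a given child type, whereas the clean factorisation of the variable- and constraint-side multinomial counts needs the stronger pointwise identity $\sum_{(T',j)\,:\,T'_j=T,\,i_{T',j}=i}|F_{T'}|q_{T'\marg j}(\omega)=|V_T|q_T(\omega)$ per slot $(T,i,\omega)$. The lemma's hypothesis that at least one $q$-valid $\hat\sigma$ exists is precisely what forces this stronger identity, and the $100\ell$-acyclic assumption on $G$ enters here by guaranteeing that the depth-$\ell$ typing at each clone faithfully reflects a well-defined local tree, so that {\bf V1} and {\bf V2} refer to honest partitions of $V_n$ and $F_n$ into type-classes. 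Once this bookkeeping is set up the remaining step is a routine Stirling expansion.
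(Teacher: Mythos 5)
Your setup is genuinely different from the paper's, and cleaner. The paper fixes one $\sigma\in\Sigma(G,\ell,q,0)$ and computes, over a uniformly random matching $M\in\cG(\cM(G,\ell))$, the probabilities $\pr[B]$ and $\pr[S\,|\,B]$ that the induced labelling of the constraint clones reproduces the slot marginals $q_{T'\marg j}$ and then the joint distributions $q_{T'}$; this is a multivariate hypergeometric calculation organised around one fixed $\sigma$. You instead observe that every valid $q$-valid $\hat\sigma$ yields the same ratio $\prod_T\binom{n_T}{(q_T(\omega)n_T)_\omega}^{-d_T}$ (via $|\cG(\cM')|=\prod_\theta N_\theta!$), so the sum factorises as (constant ratio)$\times$(number of $q$-valid $\hat\sigma$), and you count the latter by decoupling $\hat\sigma|_{C_V}$ from $\hat\sigma|_{C_F}$ into two independent multinomial counts. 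Both routes produce the same combinatorial quantity; yours skips the intermediate conditional-probability bookkeeping, and your observation that $q$-validity together with the existence of one valid representative forces the per-slot identity (and hence forces every $q$-valid $\hat\sigma$ to be valid, and every ratio to coincide) is exactly right.

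However, the closing step — identifying your entropy sum with $n\cB_G(\ell,q)$ — has a genuine gap. The definition of $\cB_{G,\ell}(q)$ contains the energy contribution $\sum_{T'}m_{T'}\bck{\ln\psi_{T'}(\SIGMA)}_{q_{T'}}$ and the correction $-\sum_{T'}m_{T'}\KL{q_{T'}}{\bigotimes_{j}q_{\partial^\ell[T'\reroot j]}}$, neither of which appears in a pure matching count; your final display simply drops them. The paper's own proof handles the first via the weight identity
\begin{align*}
\prod_{a\in F}\psi_a(\sigma)=\exp\brk{\sum_{T'}m_{T'}\bck{\ln\psi_{T'}(\SIGMA)}_{q_{T'}}}
\end{align*}
\emph{on top of} the combinatorial count — that step is present precisely because the quantity fed into \Prop~\ref{Lemma_fmCalc} is $Z_{\ell,q,\delta}$, which carries the $\psi$-weights, so the factor $\prod_a\psi_a(\hat\sigma)$ really belongs in the sum even though the lemma statement omits it. Your proof has no analogue of this step. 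You would also need to reconcile the $-\KL$ term: under the per-slot validity identity it equals $\sum_{j}H(q_{T'\marg j})-H(q_{T'})$ by the chain rule, which rearranges the expression but does not vanish in general, so the identity $\sum_T(1-d_T)n_TH(q_T)+\sum_{T'}m_{T'}H(q_{T'})=n\cB_G(\ell,q)+O(\log n)$ does not hold as stated. The combinatorial count is right; the matching-up with the stated Bethe functional is not.
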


\noindent
We defer the proof of \Lem~\ref{Lemma_countingConfigurations} to \Sec~\ref{Sec_countingConfigurations}.

\begin{proof}[Proof of \Prop~\ref{Lemma_fmCalc}]
We claim that
	\begin{equation}\label{eqProp_fmCalc1}
	\abs{\cbc{G'\in\cG(\cM(n)):G'\ism_\ell G}}\geq|\cG(\cM(G,\ell))|\exp(-n^{0.91}).
	\end{equation}
To see this, apply \Lem~\ref{Lemma_getBack} to the constant map $\hat\sigma:(v,j)\in C_V\cup C_F\mapsto\omega_0$ for some fixed $\omega_0\in\Omega$.
Then we conclude that with probability at least $1/2$ the random graph $\G(\cM(G,\ell))=\G(\cM(G,\hat\sigma,\ell))$ is at distance at most $n^{0.9}$ from a $4\ell$-acyclic
$G'\in\G(\cM(G,\ell))\subset\cG(\cM)$.
Furthermore, by \Lem~\ref{Lemma_beMyType} this factor graph $G'$, viewed as an element of $\cG(\cM)$, satisfies $G\ism_\ell G'$.
Finally, since the total number of factor graphs at distance at most $n^{0.9}$ from $G'$ is bounded by $\exp(n^{0.91})$ because all degrees are bounded,
we obtain (\ref{eqProp_fmCalc1}).

Let $\delta>0$ be small enough. 
If $\sigma\in\Sigma(G,\ell,q,\delta)$, 
then by (\ref{eqZellqdelta})  there exists a $(G,\ell)$-marginal sequence $q'$ such that $\sigma\in\Sigma(G,\ell,q',0)$
such that $\TV{q_T-q_T'}<\delta$ for all $T\in\fT_\ell$.
Because $\fT_\ell$ is finite and $\Sigma(G,\ell,q',0)\neq\emptyset$, the total number of such $q'$ is bounded by a polynomial in $n$.
Moreover, due to the continuity of $\cB_{G,\ell}(\nix)$ we can choose $\delta=\delta(\ell)$ small enough so that
$|\cB_{G,\ell}(q')-\cB_{G,\ell}(q)|<\eps/2$ for all such $q'$.
Hence, 
summing over all $\hat\sigma$ corresponding to $\sigma\in\Sigma(G,\ell,q,\delta)$, 
we obtain from (\ref{eqProp_fmCalc1}) and  \Lem~\ref{Lemma_countingConfigurations} that
	\begin{align*}
	\Erw[Z_{\ell,q}(\G)|\G\ism_\ell G]&\leq\sum_{\hat\sigma}\frac{|\cG(\cM(G,\hat\sigma,\ell))|}{\abs{\cbc{G'\in\cG(\cM(n)):G'\ism_\ell G}}}
		\leq\exp(n\cB_G(\ell,q)+\eps n).
	\end{align*}
Conversely, by \Lem~\ref{Lemma_getBack} with probability at least $1/2$ the graph $\G(\cM(G,\hat\sigma,\ell))$ is within distance
at most $n^{0.9}$ of a $4\ell$-acyclic $G'$, which satisfies $G'\ism_\ell G$ by \Lem~\ref{Lemma_beMyType}.
As before, the total number of graphs at distance at most $n^{0.9}$ off $G'$ is bounded by $\exp(n^{0.91})$.
Similarly, the total number of $\hat\sigma'$ at distance at most $n^{0.9}$ off $\hat\sigma$ is bounded by $\exp(n^{0.91})$.
Therefore, by \Lem~\ref{Lemma_beMyType}
	\begin{align*}
	\Erw[\vecone\{\cA_{2\ell+1}\}Z_{\ell,q}(\G)|\G\ism_\ell G]&\geq
		\frac{\exp(-2n^{0.98})}2\sum_{\hat\sigma}\frac{|\cG(\cM(G,\hat\sigma,\ell))|}
			{\abs{\cG(\cM(G,\ell))}}
		\geq\exp(n\cB_G(\ell,q)-\eps n),
	\end{align*}
as desired.
\end{proof}

\subsection{Proof of \Lem~\ref{Lemma_getBack}}\label{Sec_getBack}
Let $\Theta_*=\cbc{t_{G,\hat\sigma,\ell}(x,i):(x,i)\in C_V}$ be the set of all possible types.
For each $\tau\in \Theta_*$ let $n_\tau$ be the number of clones $(x,i)\in C_V$ with $t_{G,\hat\sigma,\ell}(x,i)=\tau$.
Throughout this section we assume that $n>n_0(\eps,\ell,\Delta,\Omega,\Psi,\Theta)$ is sufficiently large.

\begin{lemma}\label{Lemma_getBack1}
There exists $\beta>0$ such that  the following is true.
For any $G,\hat\sigma$ there exists $3/4<\gamma<7/8$ such that for every $\tau\in \Theta_*$ either 
$n_\tau\leq n^\gamma$ or $n_\tau>n^{\gamma+\beta}$.
\end{lemma}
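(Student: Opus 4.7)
The plan is to exploit the fact that the number of distinct types appearing in $\Theta_*$ is bounded by a constant independent of $n$, $G$, and $\hat\sigma$, and then to apply a simple measure-theoretic pigeonhole argument that locates a gap in the set of exponents $\log_n n_\tau$.

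First I would observe that $\Theta_* \subset \Theta_\ell \times \Omega = (\fT_\ell \cap \cV) \times [\Delta] \times \Omega$. Because all degrees are bounded by $\Delta$ and the variable/constraint/weight labels lie in the finite sets $\Omega$, $\Psi$, $\Theta$, the set $\fT_\ell$ of depth-$(\ell+1)$ isomorphism classes is finite, of size bounded by some $M_0=M_0(\ell,\Delta,\Omega,\Psi,\Theta)$. Hence $|\Theta_*| \leq M := M_0 \cdot \Delta \cdot |\Omega|$, and $M$ depends only on $\ell,\Delta,\Omega,\Psi,\Theta$, not on $n$, $G$ or $\hat\sigma$. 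I would fix
\[
\beta = \frac{1}{9M},
\]
so in particular $M\beta = 1/9 < 1/8$ (the length of the target interval $(3/4,7/8)$).

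Next, for each $\tau \in \Theta_*$ with $n_\tau \geq 1$ set $v_\tau = \log_n n_\tau$; note $v_\tau \leq 1 + \log_n\Delta$ since $n_\tau \leq \Delta n$. The set of \emph{bad} values
\[
B = \bigl\{\gamma \in (3/4,\,7/8)\, :\, \exists\,\tau \text{ with } n^\gamma < n_\tau \leq n^{\gamma+\beta}\bigr\}
\]
coincides with $(3/4,7/8) \cap \bigcup_{\tau \in \Theta_*}[v_\tau - \beta,\,v_\tau)$. The Lebesgue measure of $B$ is therefore at most $|\Theta_*|\cdot\beta \leq M\beta = 1/9$, which is strictly less than the length $1/8$ of $(3/4,7/8)$. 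Consequently $(3/4,7/8)\setminus B \neq \emptyset$, and any $\gamma$ in this complement satisfies that for every $\tau\in\Theta_*$ either $n_\tau \leq n^\gamma$ (this also covers the trivial case $n_\tau = 0$) or $n_\tau > n^{\gamma+\beta}$, as required.

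There is no real obstacle in this argument; the only point worth double-checking is the finiteness of $\fT_\ell$, which follows from the bounded-degree convention fixed at the end of \Sec~\ref{Sec_factorGraphs} together with the finiteness of $\Omega$, $\Theta$ and $\Psi$. The value of $\beta$ produced by this argument depends only on $\ell$ and on $\Delta,\Omega,\Psi,\Theta$, so the choice can be made uniformly across $n$, $G$ and $\hat\sigma$, as the statement demands.
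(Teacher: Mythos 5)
Your argument is correct and is essentially the same pigeonhole argument as in the paper. The paper subdivides $(3/4,7/8)$ into $\lfloor 1/(8\beta)\rfloor$ subintervals of length $\beta$ and argues that since $|\Theta_*|$ is bounded, one subinterval $[n^{3/4+j\beta},n^{3/4+(j+1)\beta}]$ must miss all the $n_\tau$; you instead bound the Lebesgue measure of the set of bad exponents $\gamma$ by $|\Theta_*|\cdot\beta<1/8$ and conclude the complement is nonempty. These are the same idea phrased differently, and both correctly reduce to the finiteness of $\fT_\ell$, which you verify.
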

\begin{proof}
The number of possible types is bounded independently of $n$.
Hence, choosing $\beta$ small enough, we can ensure that there exists an integer $j>0$ such that $3/4+j\beta<7/8$
such that $[n^{3/4+j\beta},n^{3/4+(j+1)\beta}]\cap\cbc{n_\tau:\tau\in T}=\emptyset$.
\end{proof}

Fix $\beta,\gamma$ as in the previous lemma.
Call $\tau$ {\bem rare} if $n_\tau\leq n^\gamma$ and {\bem common} otherwise.
Let $Y$ be the number of variable clones that belong to cycles of length at most $10\ell$ in $\G(\cM(G,\hat\sigma,\ell))$.

\begin{lemma}\label{Lemma_getBack2}
For large enough $n$ we have $\Erw[Y]\leq n^\gamma\ln n$.
\end{lemma}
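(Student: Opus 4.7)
The plan is to partition the variable clones according to whether their type is rare ($n_\tau \leq n^\gamma$) or common ($n_\tau > n^{\gamma+\beta}$). Because the number of possible types $|\Theta_*|$ is bounded in terms of $\Delta,\Omega,\Psi,\Theta$ and $\ell$ only, the total number $R$ of rare variable clones is at most $|\Theta_*|\cdot n^\gamma=O(n^\gamma)$. Each such clone contributes trivially to $Y$, so it remains to bound the contribution of the common variable clones.

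For each common variable clone $(x_0,i_0)$ I would reveal the random matching by BFS of depth $T=5\ell$ rooted at $(x_0,i_0)$ via the principle of deferred decisions: at each step the next pending boundary clone is matched uniformly among the still-unused clones of its type. A cycle of length at most $10\ell$ through $(x_0,i_0)$ forces this BFS to produce a self-intersection within $T$ steps. I would further split this event based on whether the BFS meets a rare clone. If it does, then $(x_0,i_0)$ belongs to the (deterministic) set $B$ of all clones within distance $T$ of some rare clone; since any clone has at most $\Delta^T$ other clones within BFS distance $T$, $|B|\leq R\Delta^T=O(n^\gamma)$ deterministically, so this case contributes $O(n^\gamma)$ to $\Erw[Y]$. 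If it does not, every match along the BFS involves a common type, and at each of the $T$ steps the conditional probability of hitting one of the at most $\Delta^T$ already discovered clones is at most $\Delta^T/(n^{\gamma+\beta}-\Delta^T)\leq 2\Delta^T/n^{\gamma+\beta}$ for large $n$. A union bound over the $T$ steps yields a per-clone probability $O(n^{-\gamma-\beta})$, and summing over the $n$ starting clones gives a contribution of $O(n^{1-\gamma-\beta})$.

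Combining the three contributions, $\Erw[Y]=O(n^\gamma+n^{1-\gamma-\beta})$. By \Lem~\ref{Lemma_getBack1} we have $\gamma>3/4$, hence $1-\gamma-\beta<1/4<\gamma$, so the second term is $o(n^\gamma)$ and the whole expression is $O(n^\gamma)\leq n^\gamma\ln n$ for large $n$. The main technical hurdle is rigorously executing the deferred-decisions coupling for the BFS, which requires keeping track of how many clones of each type remain unused while maintaining the conditioning across the $T$ exploration steps; once this bookkeeping is in place, the decomposition between rare-touching and purely-common BFS trajectories and the ensuing union bound are routine, and the logarithmic slack in the bound absorbs the constants arising from $\Delta^T$ and $|\Theta_*|$.
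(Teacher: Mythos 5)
Your proposal is correct and follows essentially the same strategy as the paper: decompose the clones according to rare versus common types, observe that the rare-adjacent part contributes $O(n^\gamma)$ deterministically, and for the remainder use deferred decisions on the BFS exploration to bound the per-clone probability of closing a short cycle by $O(n^{-\gamma-\beta})$ (the paper is even slightly more wasteful and only uses $O(n^{-\gamma})$), whence linearity of expectation gives $O(n^{1-\gamma})=o(n^\gamma)$.

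The one cosmetic difference is how the split is implemented. You condition on whether the BFS in the random graph $\G$ actually touches a rare clone, and bound the complementary contribution by $R\Delta^T$ deterministically. The paper instead discards all clones within $G$-distance $100\ell$ of a rare clone in the fixed graph $G$ (defining $U$ and $W$), and then observes that for a clone $(v,i)\in W$, the final clone that would close the cycle back to some $(v,i_{l+1})$ must be of common type because the matching is type-preserving and none of $v$'s clones is rare. Both routes work; yours is perhaps a touch more robust since it does not rely on the type-preservation observation, whereas the paper's deterministic set $U$ is also convenient for the subsequent Lemma~\ref{Lemma_getBack3}. Two small imprecisions in your write-up that do not affect the conclusion: the BFS has up to $\Delta^T$ revelation steps rather than $T$, and the number of starting clones is $\leq n\Delta$ rather than $n$; both are absorbed by the $\ln n$ slack, as you note.
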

\begin{proof}
Let $R$ be the set of variable clones $(v,i)$ of a rare type and
let $U$ be the set of all variable clones whose distance from $R$ in $G$ does not exceed $100\ell$.
Since the maximum degree as well as the total number of types are bounded, we have $|U|\leq|R|\ln\ln n\leq n^{\gamma}\sqrt{\ln n}$,
provided that $n$ is big enough.
Thus, to get the desired bound on $\Erw[Y]$ we merely need to consider the set $W$ of common clones that are at distance more than $100\ell$ from $R$.

More specifically, let $(v,i)$ be a common clone.
We are going to bound the probability that $(v,i)\in W$ and that $(v,i)$ lies on a cycle of length at most $10\ell$.
To this end, we are going to explore the (random) factor graph from $(v,i)$ via the principle of deferred decisions.
Let $i_1=i,\ldots,i_l\in[\Delta]$ be a sequence of $l\leq10\ell$ indices.
If $(v,i)$ lies on a cycle of length at most $10\ell$, then there exists such a sequence $(i_1,\ldots,i_l)$ that corresponds to this cycle.
Namely, with $v_1=v$ the cycle comprises of the clones $(v_1,i_1),\ldots,(v_{l},i_{l})$ such that $\partial(\G(\cM(G,\hat\sigma,\ell)),v_j,i_j)=(v_{j+1},i_{j+1})$.
In particular, $v_l=v_1$.
Clearly, the total number of sequences $(i_1,\ldots,i_l)$ is bounded.
Furthermore, given that $(v_l,i_l)$ is common, the probability that $v_l=v_0$ is bounded by $2n^{-\gamma}$.
Since $\gamma>3/4$, the linearity of expectation implies that $\Erw[Y]\leq|U|+2n^{1-\gamma}\ln n\leq n^\gamma\ln n$.
\end{proof}

\begin{lemma}\label{Lemma_getBack3}
Assume that $G''\in\cG(\cM(G,\hat\sigma,\ell))$ satisfies $Y(G'')\leq n^\gamma\ln^2 n$.
Then there is a $4\ell$-acyclic $G'\in\cG(\cM(G,\ell))$ such that $\dist(G',G'')\leq n^{0.9}$.
\end{lemma}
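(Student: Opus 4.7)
The plan is to construct $G'$ from $G''$ by a sequence of type-preserving edge switches. A switch selects two matching edges $((x_1,i_1),(a_1,j_1))$ and $((x_2,i_2),(a_2,j_2))$ satisfying $t_{G,\ell}(x_1,i_1)=t_{G,\ell}(x_2,i_2)$ and replaces them with the crossed pair $((x_1,i_1),(a_2,j_2))$ and $((x_2,i_2),(a_1,j_1))$. Such a switch keeps us in $\cG(\cM(G,\ell))$ and contributes exactly $2$ to the Hamming distance. Writing $U=U(G'')$ for the set of variable clones on cycles of length at most $10\ell$ in $G''$, the assumption gives $|U|\le Y(G'')\le n^\gamma\ln^2 n$, and the whole construction will use $O(|U|)$ switches, so the final distance is $O(n^\gamma\ln^2 n)\ll n^{0.9}$. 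Call a $t_{G,\ell}$-type $\tau$ \emph{common} if the number of variable clones of type $\tau$ exceeds $n^{\gamma+\beta}$, and \emph{rare} otherwise; the dichotomy of Lemma~\ref{Lemma_getBack1} lifts from fine to coarse types by grouping fine types that differ only in the $\hat\sigma$-coordinate.

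The bulk of the work handles short cycles by common-type switches. Iteratively, pick a shortest surviving cycle $C$ of length at most $4\ell$ and an edge $e=((x,i),(a,j))$ on $C$ of common type $\tau$; every surviving short cycle admits such an edge, because all-rare cycles will have been eliminated in a preparatory phase described below. A valid switch partner $((x',i'),(a',j'))$ must satisfy $t_{G,\ell}(x',i')=\tau$ together with both $(x',i')$ and $(a',j'):=G''(x',i')$ lying at $G''$-distance exceeding $20\ell$ from $U\cup\{(x,i),(a,j)\}$. The forbidden partners number at most $(|U|+2)\cdot\Delta^{20\ell+1}=O(n^\gamma\,\mathrm{polylog}\,n)$, negligible against the $n^{\gamma+\beta}$ clones of type $\tau$, so valid partners always exist. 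After the switch, $C$ is destroyed, and no cycle of length at most $4\ell$ can be created, since the two newly introduced edges each join clones at $G''\setminus\{e,e'\}$-distance exceeding $20\ell$.

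The preparatory phase handles cycles whose every edge has rare type. Since the total number of rare-type clones is at most $|\Theta_*|\cdot n^\gamma=O(n^\gamma)$, we perform $O(n^\gamma)$ type-preserving switches to bring $G''$ into agreement with the reference graph $G$ on every edge incident to a rare clone. Implementation: for each rare type $\tau$, the restrictions of $G''$ and $G$ to the $n_\tau\le n^\gamma$ clones of type $\tau$ are bijections, and their relative permutation decomposes into cycles, each of which is realized by a sequence of transpositions corresponding to type-preserving edge switches, for a total of $O(n^\gamma)$ switches summed over all rare types. Because $G\in\cG(\cM(G,\ell))$ is itself $100\ell$-acyclic, after this phase the rare-type subgraph coincides with that of $G$ and contains no short cycle, so every surviving short cycle must contain at least one common-type edge—exactly the precondition for the main loop. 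The principal obstacle is the bookkeeping of this preparatory phase: one must realize each relative permutation via type-preserving switches in the factor graph while never transiently violating the matching constraint, which amounts to a standard transposition-decomposition argument exploiting the bounded multiplicity of each rare type.
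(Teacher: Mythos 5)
Your proof is correct and follows the same strategy as the paper's: first use type-preserving switches to align the rare-type structure with $G$, then iteratively destroy each remaining short cycle by switching a common-type edge to a far-away common-type partner. The implementations differ slightly: you perform single-edge switches and align only the rare-type edges with $G$, whereas the paper swaps the whole distance-$2$ neighbourhood $N$ of a cycle clone against a matching far-away set $N'\subset C$ and, for that purpose, aligns $G'''$ with $G$ on the entire $10\ell$-ball around the rare clones (this stronger alignment is what forces $N\subset C$, needed for the paper's block swap but not for your single-edge switch, so the weaker alignment suffices for you). Two points of wording should be tightened: the rare/common dichotomy for the coarse types $t_{G,\ell}$ is cleaner if re-derived directly by the same pigeonhole argument rather than ``lifted'' from the fine types (whose union only bounds the rare count by $|\Omega|\,n^\gamma$, not $n^\gamma$, though this is still good enough); and your clause ``no cycle of length at most $4\ell$ can be created'' should assert $10\ell$, so that the forbidden set $U$ genuinely does not grow across iterations — which your distance bound $>20\ell$ in fact delivers once one checks the case where a would-be new cycle uses both new edges, exactly as your far-from-$U$ requirement on $(x',i')$ and $(a',j')$ is designed to rule out.
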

\begin{proof}
Let $R$ be the set of variable clones $(v,i)$ of a rare type and
let $U$ be the set of all variable clones whose distance from $R$ in $G$ does not exceed $10\ell$.
Moreover, let $G'''\in\cG(\cM(G,\ell))$ minimise $\dist(G'',G''')$ subject to the condition that $\partial(G''',v,i)=\partial(G,v,i)$ for all $(v,i)\in U$.
Then $\dist(G'',G''')\leq n^\gamma\ln n$ because the total number of types is bounded.
Therefore, the assumption $Y(G'')\leq n^\gamma\ln^2 n$ implies that $Y(G''')\leq n^\gamma\ln^3 n$, say.
In addition, because $G$ is $100\ell$-acyclic, none of the clones in $R$ lies on a cycle of length at most $4\ell$ in $G'''$.

Altering only a bounded number of edges in each step, we are now going to remove the short cycles of $G'''$ one by one.
Let $C$ be the set of common clones.
The construction of $G'''$ ensures that only common clones lie on cycles of length at most $4\ell$.
Consider one such clone $(v,i)$ and let $N$ be the set of all variable clones that can be reached from $(v,i)$ by traversing precisely two edges of $G'''$;
	thus, $N$ contains all clones $(w,j)$ such that $w$ has distance two from $v$ and all clones $(v,j)$ that are incident to the same constraint node as $(v,i)$.
Once more by the construction of $G'''$ we have $N\subset C$.
Furthermore, $|N|\leq\Delta^2$.

We claim that there exists $N'\subset C$ and a bijection $\xi:N\to N'$ such that the following conditions are satisfied.
	\begin{enumerate}[(i)]
	\item $t_{G,\hat\sigma,\ell}(w,j)=t_{G,\hat\sigma,\ell}(\xi(w,j))$ for all $(w,j)\in N$.
	\item the pairwise distance in $G'''$ between any two clones in $N'$ is at least $100\ell$.
	\item the distance in $G'''$ between $N\cup \{(v,i)\}$ and $N'$ is at least $100\ell$.
	\item the distance between $R$ and $N'$ is at least $100\ell$.
	\item any $(w,j)\in N'$ is at distance at least $100\ell$ from any clone that belongs to a cycle of $G'''$ of length at most $4\ell$.
	\end{enumerate}
Since the maximum degree of $G'''$ is bounded by $\Delta$, there are no more than $n^\gamma\ln^4 n$ clones violate condition (iii), (iv) or (v).
By comparison, there are at least $n^{\gamma+\beta}$ clones of any common type.
Hence, the existence of $\xi$ follows.

Now, obtain $G''''$ from $G'''$ as follows.
\begin{itemize}
\item let $G''''(\xi(w,j))=G'''(w,j)$ and $G''''(w,j)=G'''(\xi(w,j))$ for all $(w,j)\in N$.
\item let $G''''(w,j)=G'''(w,j)$ for all $(w,j)\not\in N\cup N'$.
\end{itemize}
It is immediate from the construction that any clone on a cycle of length at most $4\ell$ in $G''''$ also lies on such a cycle of $G'''$.
Moreover, $(v,i)$ does not lie on a cycle of length at most $4\ell$ in $G''''$.
Hence, $Y(G'''')<Y(G''')$.
In addition, all clones on cycles of length at most $4\ell$ and their neighbours are common.
Hence, the construction can be repeated on $G''''$.
Since $Y(G''')\leq n^\gamma\ln^3n$, we ultimately obtain a $4\ell$-acyclic $G''$ with  $\dist(G',G'')\leq n^{\gamma}\ln^4n<n^{0.9}$.
\end{proof}

\begin{proof}[Proof of \Lem~\ref{Lemma_getBack}]
The assertion is immediate from \Lem s~\ref{Lemma_getBack2} and~\ref{Lemma_getBack3} and Markov's inequality.
\end{proof}

\subsection{Proof of \Lem~\ref{Lemma_countingConfigurations}}\label{Sec_countingConfigurations}
Let $\cV_\ell=\fT_\ell\cap\cV$ and for $T\in\cV_\ell$ let $n_T$ be the number of variable nodes $x$ such that $\partial^\ell[G,x]=T$.
By Stirling's formula the number
$|\Sigma(G,\ell,q,0)|$ of assignments $\sigma:V_n\to\Omega$ with marginals as prescribed by $q$ satisfies
	\begin{align}\label{eqLemma_countingConfigurations666}
	\abs{\ln|\Sigma|-\sum_{T\in\cV_\ell}n_T H(q_T)}\leq\ln^2n.
	\end{align}
Further, for $T\in\cV_\ell$ and $i\in[d_T]$ let $C_V(T,i)$ be the set of all clones $(x,i)\in C_V$ such that $t_{G,\ell}(x,i)=(T,i)$.
Moreover, let $C_F(T,i)$ be the set of all clones $(a,j)\in C_F$ such that $t_{G,\ell}(a,j)=(T,i)$.
Additionally, let $\cF_\ell(T,i)$ be the set of all pairs $(T',j)$ with $T'\in\fT_\ell\cap\cF$, $j\in[d_{T'}]$ such that
there is $(a,j)\in C_F(T,i)$ such that $\partial^{\ell+1}[G,a]=T'$.
Of course, the total number of perfect matchings between $C_V(T,i)$ and $C_F(T,i)$ equals $n_T!$.
If we fix $\sigma\in\Sigma(G,\ell,q,0)$, then any such perfect matching induces an assignment $\hat\sigma:C_F(T,i)\to\Omega$ by
mapping a clone $(a,j)\in C_F(T,i)$ matched to $(x,i)$ to the value $\sigma(x)$.
Let $B_{T,i}$ be the event that in a such random matching for all $(T',j)\in\cF_\ell(T,i)$ and all $\omega$ we have
	\begin{align*}
	\abs{\cbc{(a,j)\in C_F:\partial^{\ell+1}[G,a]=T',\hat\sigma(a,j)=\omega}}
	&=q_{T'\marg j}(\omega)
		\abs{\cbc{(a,j)\in C_F:\partial^{\ell+1}[G,a]=T'}}
	\end{align*}
Moreover, for $(T',j)\in\cF_\ell(T,i)$ let $m_{T'}$ be the number of $a\in F$ such that $\partial^{\ell+1}[G,a]=T'$.
Then
	\begin{align*}
	\pr\brk{B_t}&=\frac1{n_t!}\brk{\prod_{\omega\in\Omega}\bink{q_T(\omega)n_T}{(q_{T'\marg j}(\omega) m_{T'})_{(T',j)\in\cF_\ell(T,i)}}}
		\brk{\prod_{(T',j)\in\cF_{\ell}(T,i)}\bink{m_{T'}}{(q_{T'\marg j}(\omega)m_{T'})_{\omega\in\Omega}}}
		\prod_{(T',j)\in\cF_{\ell}(T,i),\omega\in\Omega}(q_{T'\marg j}(\omega)m_{T'})!\\
		&=\bink{n_T}{(q_T(\omega)n_T)_{\omega\in\Omega}}^{-1}\prod_{(T',j)\in\cF_\ell(T,i)}\bink{m_{T'}}{(q_{T'\marg j}(\omega)m_{T'})_{\omega\in\Omega}}
		=\exp\brk{O(\ln n)-\sum_{(T',j)\in\cF_\ell(T,i)}m_{T'}\KL{q_{T'\marg j}}{q_{T}}}.
	\end{align*}
Let $\cF_\ell=\fT_\ell\cap\cF$.
Multiplying up over all $(T,i)$, we obtain for $B=\bigcap B_{T,i}$
	\begin{align}\label{eqSillyExpectation1}
	\pr\brk{B}&=\prod_{T\in\cV_\ell}\prod_{i\in[d_T]}\pr\brk{B_{T,i}}=\exp\brk{O(\ln n)-\sum_{T'\in\cF_\ell}\sum_{j\in[d_{T'}]}m_{T'}
				\KL{q_{T'\marg j}}{q_{\partial^\ell[T'\reroot j]}}},
	\end{align}
where the constant hidden in the $O(\nix)$ depends on $\Delta,\Omega,\Psi,\Theta,\ell$ only.

Further, for $T'\in\cF_\ell$ let $S_{T'}$ be the event that for every $(\omega_1,\ldots,\omega_{d_{T'}})\in\Omega^{d_{T'}}$
we have
	$$\abs{\cbc{a\in F:\partial^{\ell+1}[G,a]=T',\forall j\in[d_{T'}]:\hat\sigma(a,j)=\omega_j}}
		=q_{T'}(\omega_1,\ldots,\omega_{d_{T'}})\abs{\cbc{a\in F:\partial^{\ell+1}[G,a]=T'}}.$$
Then
	\begin{align*}
	\pr\brk{S_{T'}|B}&=\bink{m_{T'}}{m_{T'} q_{T'}}\prod_{j\in[d_{T'}]}\bink{m_{T'}}{m_{T'} q_{T'\marg j}}^{-1}
		=\exp\brk{O(\ln n)-m_{T'}\KL{q_{T'}}{q_{T'\marg 1}\tensor\cdots\tensor q_{T'\marg d_{T'}}}}.
	\end{align*}
Hence, letting $S=\bigcap S_{T'}$, we obtain
	\begin{align}\label{eqSillyExpectation2}
	\pr\brk{S|B}&
		=\exp\brk{O(\ln n)-\sum_{T'\in\cF_\ell}m_{T'}\KL{q_{T'}}{q_{T'\marg 1}\tensor\cdots\tensor q_{T'\marg d_{T'}}}}.
	\end{align}
Once more the constant hidden in the $O(\nix)$ depends on $\Delta,\Omega,\Psi,\Theta,\ell$ only.
Further, given $S\cap B$ we have
	\begin{align}\label{eqSillyExpectation3}
	\prod_{a\in F}\psi_a(\sigma)&=\exp\brk{\sum_{T'\in\cF_\ell}m_{T'}\bck{\ln\psi_{T'}(\SIGMA)}_{q_{T'}}}.
	\end{align}
Finally, the assertion follows from (\ref{eqLemma_countingConfigurations666})--(\ref{eqSillyExpectation3}).

\subsection*{Acknowledgment}
The second author thanks Dimitris Achlioptas for inspiring discussions.

\end{document}